\documentclass[12pt]{amsart}

\usepackage{amsmath,amssymb,amsfonts,latexsym,mathrsfs}
\usepackage[margin=1in]{geometry}
\usepackage{array,xcolor,float}

\newtheorem{theorem}{Theorem}[section]
\newtheorem{lemma}[theorem]{Lemma}
\newtheorem{proposition}[theorem]{Proposition}
\newtheorem{corollary}[theorem]{Corollary}
\newtheorem{definition}[theorem]{Definition}
\theoremstyle{remark}
\newtheorem{remark}[theorem]{Remark}

\newcommand{\nc}{\newcommand}
\nc{\codim}{\operatorname{codim}}
\nc{\height}{\operatorname{ht}}
\nc{\Irr}{\operatorname{Irr}}
\nc{\Id}{\operatorname{Id}}
\nc{\Ann}{\operatorname{Ann}}
\nc{\rk}{\operatorname{rank}}

\newcommand{\red}[1]{\textcolor{red}{#1}}
\usepackage{tikz,tikz-cd}

\usetikzlibrary{matrix}

\usetikzlibrary{fit}

\usetikzlibrary{matrix,backgrounds}
\pgfdeclarelayer{myback}
\pgfsetlayers{myback,background,main}

\tikzset{mycolor/.style = {line width=1bp,color=#1}}%
\tikzset{myfillcolor/.style = {draw,fill=#1}}%
\usetikzlibrary{arrows,matrix,positioning}
\usepackage{pstricks}

\usetikzlibrary{graphs,graphs.standard}
\usepackage{changepage}
\usepackage{tikz-cd}
\usepackage{mathrsfs}
\usetikzlibrary{positioning}
\makeatletter

\usepackage{circledsteps}

\newcommand*{\encircled}[1]{\relax\ifmmode\mathpalette\@encircled@math{#1}\else\@encircled{#1}\fi}
\newcommand*{\@encircled@math}[2]{\@encircled{$\m@th#1#2$}}

\newcommand*{\@encircled}[1 ]{%
  \tikz[baseline,anchor=base]{\node[draw,circle,outer sep=0.1pt,inner sep=.18ex,  line width = 1 pt ] {#1};}}

\newcommand{\cir}[1]{\tikz[baseline]{%
    \node[anchor=base, draw, circle, inner sep=0, minimum width=1.2em]{#1};}}
\usepackage{float}
\usepackage{xcolor}

\usepackage{lscape}
\begin{document}

\title[Reverse Tableaux and Surjectivity]{Reverse Tableaux and the Surjectivity of the Component Map in Type $A$}
%{Reverse Tableaux and the Surjectivity of the Component Map for the Nilfibre in Type A}

\author [Yasmine Fittouhi ]{Yasmine Fittouhi }
\date{\today}
\maketitle
\vspace{-.9cm}\begin{center}

Department of Mathematics\\
The Weizmann Institute of Science\\
Rehovot, 7610001, Israel\\
fittouhiyasmine@gmail.com
\end{center}\

\

\

\date{\today}

\maketitle

%==========================================================================
\begin{abstract}
Let $G = \mathrm{SL}(n,\mathbb{C})$, let $B$ be a fixed Borel subgroup, and
let $P \supset B$ be a parabolic subgroup determined by a composition
$(c_1,\dots,c_k)$ of $n$.  Write $P'$ for the derived group of $P$
and $\mathfrak{m}$ for the Lie algebra of the nilradical of $P$.  By
Richardson's theorem the algebra of semi-invariants
$\mathscr{I} := \mathbb{C}[\mathfrak{m}]^{P'}$
is polynomial; in type $A$ its generators may be taken to be the
Benlolo--Sanderson (BS) invariants.  The \emph{nilfibre} is the common zero
locus $\mathscr{N} := V(\mathscr{I}_{+}) \subset \mathfrak{m}$.

A set of \emph{component tableaux}, each encoding combinatorial data
summarised in a multi-set called the \emph{Red Set}, was constructed in
earlier work by Y. Fittouhi and A. Joseph in The reverse tableau: a gateway to the surjectivity of the component map.  The resulting \emph{component map}
$\phi : \{\text{component tableaux}\} \to \Irr(\mathscr{N})$
was shown to be injective.

In the present article, we develop the Factorization Principle for
Benlolo--Sanderson invariants in order to give a rigorous proof of the
surjectivity of the component map $\phi$. While the combinatorial framework
of reverse tableaux was introduced in~\cite{FJ6}, the surjectivity of
$\phi$ remained conjectural: the linearization method used there did not
exclude the possible loss or merging of irreducible components. The present
paper resolves this geometric difficulty by showing that the relevant
invariants factorize into products indexed by pseudo-neighbouring column
pairs, thereby ensuring that every component is reached in a controlled
and accountable way.
\end{abstract}

\medskip
\noindent\textbf{Key words.}
Parabolic adjoint action, nilfibre, Benlolo--Sanderson invariants, component
map, reverse tableaux, Red Set, surjectivity, Krull's theorem.

\medskip
\noindent\textbf{AMS Classification.} 17B35.

%==========================================================================
\section{Introduction}\label{sec:intro}
%==========================================================================

\subsection{The problem and its depth}

Components of an algebraic variety are among the most fundamental  and
most elusive  objects in algebraic geometry.  Their existence is guaranteed
by the Lasker--Noether theorem \cite[Thm.\,4.5]{AM}, but finding them
explicitly is notoriously difficult: it is a far-reaching generalisation of
factoring integers, a problem now of renewed importance for its role in
public-key cryptography.

In the present work we study the irreducible components of the
\emph{nilfibre}
\[
  \mathscr{N} := V(\mathscr{I}_{+}) \subset \mathfrak{m},
\]
where $\mathfrak{m}$ is the Lie algebra of the nilradical of a parabolic
subgroup $P \subset \mathrm{SL}(n,\mathbb{C})$, and $\mathscr{I}_{+}$ is the
augmentation ideal of the algebra of semi-invariants
$\mathscr{I} := \mathbb{C}[\mathfrak{m}]^{P'}$ ($P'$ being the derived group
of $P$).  By Richardson's theorem \cite[2.2.3]{FJ1}, $\mathscr{I}$ is
polynomial on $\mathbf{g}$ generators; in type $A$ these may be taken to be
the Benlolo--Sanderson (BS) invariants introduced in \cite{BS} and verified
in \cite{JM}.  The variety $\mathscr{N}$ is thus cut out by exactly
$\mathbf{g}$ equations in the affine space $\mathfrak{m}$.  By Krull's
principal ideal theorem \cite{AM,Eis,Mat} the codimension of every
irreducible component is at most $\mathbf{g}$; we shall show it equals
$\mathbf{g}$, confirming that the generators form a regular sequence on the
relevant components.

Unlike the Springer fibre, for which a geometric description via the Steinberg
triple variety yields components as closures of explicit orbits \cite{S,St},
the nilfibre $\mathscr{N}$ admits no such clean picture.  Even in type $A$,
$\mathscr{N}$ may be reducible (unlike the case studied by Kostant \cite{K}
for the full group), and components of the same nilfibre may be Lagrangian or
non-Lagrangian within the same ambient space \cite[8.2]{FJ5}.  The algebraic
group does not act with finitely many orbits on $\mathscr{N}$, so the
classical orbit-closure strategy is unavailable.  This is precisely why, as we
argue below, an entirely new approach is needed.

\subsubsection*{Component tableaux and the component map}

In \cite{FJ5} we constructed a combinatorial map
\[
  \phi : \{\text{component tableaux}\} \longrightarrow \Irr(\mathscr{N}),
\]
associating to each component tableau $\mathscr{T}^{\mathcal{C}}$ (defined by
data $\mathcal{C}$ which, as we explain below, is most transparently encoded
in a multi-set called the \emph{Red Set}) an irreducible component
$\mathscr{C} := \overline{B \cdot \mathfrak{u}^{\mathcal{C}}}$ of
$\mathscr{N}$.  The subspace $\mathfrak{u}^{\mathcal{C}} \subset \mathfrak{m}$
is spanned by the root vectors not belonging to the \emph{excluded set}
$X(\mathscr{T}^{\mathcal{C}})$.  The component map was shown to be injective
in \cite[Thm.\,7.5]{FJ5}.

The starting observation  which we call our \emph{neighbourhood philosophy}
 is that the generators of $\mathscr{I}$ are parameterised by neighbouring
column pairs, and the components of $\mathscr{N}$ should likewise be encoded
through these pairs.  The component tableaux of \cite{FJ5} realise this
philosophy via the decoration of $\mathscr{T}$ by lines and the notion of
$i$-strings moving rightward through the diagram.  Though natural in
retrospect, this construction was not obviously discoverable, its origin in
 was an instance of mathematical serendipity that I noticed.
 
 The problem of the classification of the irreducible components of the nilfibre $\mathscr{N}$ has been a subject of intense study. While previous work in \cite{FJ6} introduced the combinatorial framework of reverse tableaux and suggested a potential gateway to surjectivity, the formal proof remained elusive. Specifically, \cite{FJ6} could not guarantee that the process of linearization would not lead to the loss of certain irreducible components. 

In this paper, we resolve this long-standing issue and provide a complete proof of the surjectivity of the component map. Our approach departs from the linearization strategies of the past; instead, we introduce a \textbf{Factorization Strategy}. We demonstrate that the Benlolo--Sanderson invariants do not merely vanish but factor into products of lower-degree invariants when restricted to the varieties defined by reverse tableaux. This ensures that every irreducible component of $\mathscr{N}$ is accountably reached, completing the classification in type $A$.

\subsubsection*{The strategy for surjectivity and its obstacles}

To show surjectivity, we must demonstrate that every irreducible component
$\mathscr{C} \in \Irr(\mathscr{N})$ arises from some component tableau.  A
classical strategy proceeds as follows.  Order the generators
$I_1, \dots, I_{\mathbf{g}}$ of $\mathscr{I}_{+}$.  Each $I_j$ defines a
hypersurface $D_j = V(I_j)$, and $\mathscr{N} = D_1 \cap \cdots \cap
D_{\mathbf{g}}$.  By Krull's theorem, each $D_j$ cuts codimension at most
$1$ in the preceding intersection.  If we can identify, at each step $j$, the
irreducible components of $D_j$ restricted to the previous intersection, and
match them to our list, surjectivity follows by induction on $j$.

The difficulty, following
\cite[3.6]{FJ6}  is that linearising an invariant $I_j$ (replacing it by a
linear function given by a Weierstrass section) involves \emph{inhomogeneous}
substitutions.  Such substitutions can a priori cause components to
``disappear'' or ``merge''.  Merging cannot occur here because the BS
invariants are multi-linear, so their factors involve pairwise distinct
coordinate vectors and components occur with multiplicity one
\cite[3.6]{FJ6}.  Disappearance under inhomogeneous substitution is more
subtle, but as we show via the factorisation mechanism below  the
specific structure of reverse tableaux ensures that no component is lost.
\\
We emphasize that the potential for components to disappear or merge due to inhomogeneous substitutions is formally overcome by the {factorization mechanism} and the {preservation of the numerical black count} (Lemma~\ref{lem:blackcount}). 
Since the Benlolo--Sanderson invariants are multi-linear, their factors involve pairwise distinct coordinate vectors, ensuring that components occur with multiplicity one \cite[3.6]{FJ6}. Crucially, as shown in Lemma~\ref{lem:blackcount}, the number of black entries in the left trapezium $N_{\mathscr{R}^j}(\ell\mathcal{T}^s_{C,C'}(j))$ remains exactly equal to the degree $d$ of the invariant $I^s_{C,C'}$ at every step $j \le i$ as long as the pair $(C,C')$ is free . This stable degree ensures that the codimension of the variety increases by exactly one at each implementation step of the reverse tableau. Consequently, the geometric information is not lost despite the non-homogeneous nature of the coordinate substitutions.

\subsubsection*{The role of reverse tableaux and factorisation}

The key insight of the present work is to replace linearisation by
\emph{factorisation}.  Rather than linearising $I_j$ directly, we observe
that when the reverse tableau $\mathscr{R}^t$ at stage $t$ contains $n$
pseudo-neighbouring columns of height $s$ between the boundaries of the pair
$(C,C')$ currently being implemented, the restriction of $I^s_{C,C'}$ to
$\mathfrak{u}(\mathscr{R}^t)$ factors as a product of $n$ \emph{smaller} BS
invariants.  This is a consequence of the product phenomenon for
semi-invariants and the combinatorial structure of the
trapezium.

Each factor in this product corresponds to a choice of how to further reduce
the variety.  Since the defining ideal of any irreducible component is prime,
at least one factor must vanish on that component.  The reverse tableau records
\emph{which} factor is selected at each stage.  Iterating over all $\mathbf{g}$
pairs, we obtain a complete reverse tableau that encodes a consistent
sequence of vanishing conditions and is coherent with Krull's theorem
\cite{AM,Eis}: at each step the codimension increases by exactly one.  This is
the core of the proof of Theorem~\ref{thm:surjectivity}.

An additional structural advantage of reverse tableaux over component tableaux
is their \emph{flexibility}: several reverse tableaux may share the same Red
Set (unlike component tableaux, which are uniquely determined by their Red Set
\cite[Lemma 3.3]{FJ6}).  This flexibility is essential for the factorisation
to work in all cases, since different orderings of the complete sequence of
neighbouring pairs a complete sequence $\mathcal{P}$
of neighbouring pairs,  yield different reverse tableaux but  as shown in
\cite[Thm.\,9.3]{FJ6}  sometime they define the same component via their excluded
roots, since several reverse tableaux may share the same Red
Set.

\subsection*{Organisation of the paper}

Section~\ref{sec:setup} recalls the algebraic and combinatorial setup:
diagrams, standard tableaux, the BS invariants and their degree, and the
component map.  Section~\ref{sec:RT} develops the theory of reverse tableaux
in full detail: the Enabling Proposition (Section~\ref{ssec:enabling}), the
construction algorithm (Section~\ref{ssec:RTconstruction}), the trapezium and
its properties (Section~\ref{ssec:trap}), and excluded roots
(Section~\ref{ssec:excl}).  Section~\ref{sec:vanish} proves the vanishing
theorem for BS invariants on the subspace attached to a complete reverse
tableau.  Section~\ref{sec:factor} establishes the factorisation of BS
invariants over pseudo-neighbouring pairs and the recording theorem.
Section~\ref{sec:Krull} supplies the Krull-dimension argument that controls
codimension.  Section~\ref{surjectivity} gives the proof of
surjectivity.  Section~\ref{sec:example} illustrates the construction and
the proof strategy through three explicit examples.

\subsection*{Acknowledgements}

The author was supported by ISF grant 1957/21, ISF grant No.\,1347/23, and a
grant from the Mathematics Faculty, Weizmann Institute.  The main results were
announced in an invited talk at Bar-Ilan University (November 2025).\\
%Special thank to Aileen 

%==========================================================================
\section{Setup: Tableaux, Invariants, and the Component Map}\label{sec:setup}
%==========================================================================
This section fixes the notation used throughout the paper.  It introduces the parabolic nilradical, the diagram and standard tableau, the neighbouring column pairs, and the Benlolo--Sanderson invariants that cut out the nilfibre.  These objects provide the common language for the reverse-tableau construction and for the component map.

\subsection{Matrices and the nilradical}\label{ssec:matrices}

Following \cite[Sects.\,2--3]{FJ5}, identify $\mathfrak{gl}(n)$ with the
space $\mathbf{M}_n$ of $n \times n$ matrices under commutation.  A standard
parabolic subalgebra $\mathfrak{p} \supset \mathfrak{b}$ of $\mathfrak{gl}(n)$
where $\mathfrak{b}$ is the Borel of upper triangular matrices;  $\mathfrak{p}$  is determined
by a composition $(c_1, c_2, \dots, c_k)$ of $n$.  This defines the Levi
factor $\mathfrak{r}$ as the span of the $k$ diagonal blocks
$\mathbf{B}_i$ of size $c_i \times c_i$, and the nilradical
$\mathfrak{m} = \bigoplus_{i=2}^{k} \mathbf{C}_i$, where $\mathbf{C}_i$ is
the rectangular block in $\mathbf{M}_n$ strictly above $\mathbf{B}_i$, of
width $c_i$ and height $\sum_{j < i} c_j$.  Let $x_{i,j}$ denote the
$(i,j)$-th coordinate vector of $\mathbf{M}_n$.

\subsection{Diagrams, columns, rows, and boxes}\label{ssec:diag}

Define a diagram $\mathscr{D}$ with columns $C_1, \dots, C_k$ satisfying
$\height(C_i) = c_i$.  Let $R_s$ ($s = 1, 2, \dots$) denote the $s$-th row
of $\mathscr{D}$, counted downward, and set $R^u := \bigcup_{i=1}^{u} R_i$
and $R^{>u}$ for the remaining rows.  The \emph{$(i,j)$-th box} is
$b_{i,j} := C_i \cap R_j$.  The \emph{lower part} of $C$ relative to $u$ is
$C^{>u} := R^{>u} \cap C$.

%If $C, C'$ are columns of $\mathscr{D}$ with $C$ to the left of $C'$, we write $[C,C']$ for the set of columns between them (inclusive); thenotations $]C,C']$, $[C,C'[$, $]C,C'[$ omit the respective endpoint
  If $C,C'$ are columns of $\mathscr D$ we shall always mean $C$ to lie to the left of $C'$.  Denote by $[C,C']$ the set of columns of $\mathscr D$ between $C,C'$. If $C$, $C'$, or both are omitted, then we denote the resulting set as $]C,C'],[C,C'[,]C,C'[$.  According to context we may also mean this to be the set of boxes in these columns or their entries.\\  Given
integers $m \le n$ we also use $[m,n] := \{m, m+1, \dots, n\}$ with similar
conventions for open-bracket variants.

\subsection{The standard tableau}\label{ssec:stdtab}

The \emph{standard tableau} $\mathscr{T}$ is the filling of $\mathscr{D}$
with the entries $1, 2, \dots, n$ increasing down columns and then from left
to right \cite[2.2.2]{FJ5}.  This has two consequences used throughout:
\begin{enumerate}
  \item[(a)] The coordinate vectors $x_{i,j}$ with $i$ strictly to the left
             of $j$ in $\mathscr{T}$ form a basis of $\mathfrak{m}$.
  \item[(b)] For any two columns $C_r, C_{r+1}$ in $\mathscr{T}$, every
             entry of $C_r$ is strictly less than every entry of $C_{r+1}$.
\end{enumerate}

\subsection{Neighbouring columns and rectangles}\label{ssec:nbcol}

\begin{definition}%[{\cite[2.3]{FJ5}}]
Two columns $C, C'$ of $\mathscr{D}$ of the same height $s$ are called
\emph{neighbouring} if there is no column of height $s$ strictly between
them (i.e.\ in $]C, C'[$).  We denote by $\mathbf{g}$ the total number of
 neighbouring pairs %, which equals the number of generators of $\mathscr{I}$ over $\mathbb{C}$.
\end{definition}

For a neighbouring pair $(C, C')$ of height $s$, define
\[
  R^s_{C,C'} := R^s \cap [C,C'], \qquad
  \ell R^s_{C,C'} := R^s_{C,C'} \setminus C = R^s \cap\, ]C,C'].
\]
We call $R^s_{C,C'}$ the \emph{rectangle} and $\ell R^s_{C,C'}$ the
\emph{left rectangle} associated to the pair.

\subsection{Benlolo--Sanderson invariants}\label{ssec:BS}
Let a parabolic
subgroup $P \subset \mathrm{SL}(n,\mathbb{C})$, and $\mathscr{I}_{+}$ is the
augmentation ideal of the algebra of semi-invariants
$\mathscr{I} := \mathbb{C}[\mathfrak{m}]^{P'}$, where $P'$ is the derived group
of $P$.
\begin{definition}\label{def:BS}
The \emph{Benlolo--Sanderson invariant} $I^s_{C,C'} \in \mathscr{I}$
attached to a neighbouring pair $(C,C')$ of height $s$ is (essentially) the
lowest-degree coefficient of the appropriate
$(n-s)\times(n-s)$ minor of the matrix $c \cdot \mathrm{Id} + X$ restricted
to the $P'$-action, where $X \in \mathfrak{m}$.  It is multi-linear and
homogeneous of degree equal to the number of entries in $\ell R^s_{C,C'}$.
The BS invariants form a generating set for the polynomial algebra
$\mathscr{I} = \mathbb{C}[\mathfrak{m}]^{P'}$.
\end{definition}
\begin{theorem}
The number of generators of $\mathscr{I}$ over $\mathbb{C}$ is  $\mathbf{g}$ the total number of
 neighbouring pairs.
 \end{theorem}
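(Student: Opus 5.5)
The count of generators can be obtained by combining two facts. First, Richardson's theorem (as recalled from \cite[2.2.3]{FJ1}) says that $\mathscr{I}$ is a polynomial algebra, so its number of generators is intrinsic. Second, Definition~\ref{def:BS} says that the Benlolo--Sanderson invariants $I^s_{C,C'}$ generate $\mathscr{I}$ \cite{BS,JM}. There is exactly one such invariant for each neighbouring pair $(C,C')$, so the polynomial generators are indexed by the neighbouring pairs. The theorem then reduces to checking that the $\mathbf{g}$ invariants $I^s_{C,C'}$ are algebraically independent, since a minimal homogeneous generating set of a polynomial algebra has cardinality equal to its transcendence degree.

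\textbf{Step 1.} Show that none of the $I^s_{C,C'}$ is redundant. The strategy is to pass to the graded vector space $\mathscr{I}_+/\mathscr{I}_+^2$, whose dimension equals the number of polynomial generators. It then suffices to show that the images of the $I^s_{C,C'}$ are linearly independent there. For each pair, pick a distinguished monomial in $I^s_{C,C'}$. A natural choice is the product of the coordinate vectors $x_{i,j}$ that link an entry in the left rectangle $\ell R^s_{C,C'}$ to an entry one column to its left, read row by row. This monomial is multilinear, because the BS invariants are multilinear, and its degree is $|\ell R^s_{C,C'}|$. Its support determines the pair $(C,C')$ and the height $s$. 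In particular, it cannot occur in any product of BS invariants of lower degree, nor in any other BS invariant of the same degree.

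\textbf{Step 2.} Confirm that the $I^s_{C,C'}$ are nonzero and pairwise distinct. Since each one is the lowest-degree coefficient of a minor of $c\cdot\mathrm{Id}+X$, this is visible from the distinguished monomial, which appears with coefficient $\pm1$.

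\textbf{Step 3 (alternative).} One could instead match $\mathbf{g}$ with the transcendence degree directly, by computing $\dim\mathfrak{m}$ minus the generic $P'$-orbit dimension. That route is harder, so I would rely on Step 1.

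The main obstacle is Step 1: showing that the distinguished monomial of $I^s_{C,C'}$ does not occur in any product $\prod I^{s_t}_{C_t,C'_t}$ whose total degree equals $|\ell R^s_{C,C'}|$. I would handle this by a disjointness argument. Such a product would have to use coordinates linking all of the columns of $]C,C']$ up to row $s$. The factors of the product, however, each cover only rectangles of neighbouring pairs, and these meet the rows of $R^s$ in the wrong shape: either the heights do not match, or a column of height $s$ appears strictly inside the pair, which the neighbouring condition forbids. If this becomes delicate, I would fall back on citing \cite{JM}, where independence of the BS generators is verified.
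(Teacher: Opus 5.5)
Your argument is correct, but it follows a different route from the paper. The paper gives no argument for this statement: it defers to \cite{BS,JM} for the construction, the generating property and the count. You take only Richardson's polynomiality and the generating property as black boxes. You then prove non-redundancy yourself, using graded Nakayama ($\dim \mathscr{I}_+/\mathscr{I}_+^2$ equals the number of polynomial generators, because $\mathscr{I}_+$ is a maximal ideal of a polynomial ring) together with a leading-monomial argument.

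The obstacle you flag does close along the lines you sketch. Suppose the horizontal monomial $\mu$ of $I^s_{C,C'}$ were a product $\prod_k \mu_k$ of at least two factors, with $\mu_k$ a monomial of $I^{s_k}_{D_k,D'_k}$.
\begin{itemize}
\item Since $\mu$ is squarefree with only horizontal edges, every path of every $\mu_k$ stays in its own row and lies inside $[C,C']$.
\item The row-$s$ edges of $\mu$ must come from a factor with $s_k=s$. A factor with $s_k<s$ has no path in row $s$, and a factor with $s_k>s$ would need an edge in row $s_k$, which $\mu$ does not have.
\item By the neighbouring condition, that factor is $(C,C')$ itself. It already has degree $\deg\mu$, so no other factor is possible.
\end{itemize}
Hence $\mu$ occurs neither in another BS invariant nor in $\mathscr{I}_+^2$.

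What your route buys is an actual explanation of why no BS invariant is redundant. The price is dependence on the monomial structure of all BS invariants. You need two facts:
\begin{itemize}
\item each monomial of $I^{s'}_{D,D'}$ is a system of disjoint right-going paths from the $s'$ boxes of $D$ to $D'$;
\item the horizontal system occurs with coefficient $\pm1$, which you can read off the minor of $c\cdot\mathrm{Id}+X$, since each $x$-monomial there comes from a unique permutation.
\end{itemize}
Rely only on this path property, not on the paper's literal wording ``passing through each box of $R^s_{C,C'}$ exactly once''. That wording is inaccurate: the paper's own $I^1_{C_1,C_4}$ for $(1,2,2,1)$ contains $x_{13}x_{35}x_{56}$, which uses boxes in row $2$.

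One repair is needed. Your recipe ``link to an entry one column to its left'' breaks when a column of $]C,C'[$ is shorter than $s$. For $(2,1,2)$ it gives $x_{13}x_{34}x_{35}$, which is not a monomial of $I^2_{C_1,C_3}$. In row $t$ you must instead link to the nearest column on the left of height $\ge t$; for $(2,1,2)$ this gives $x_{13}x_{34}x_{25}$.
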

 This result identifies the algebraic equations defining the nilfibre with the combinatorial neighbouring pairs of the diagram.  It is the bridge between the invariant-theoretic problem and the tableau construction.\\
 
 We refer to \cite{BS,JM} for the construction and the proof of the generating
property.\\

A more intrinsic description of generators of $\mathscr{I}$, used extensively below, is the following. A right going lines $\ell_{i,j}$ with entries $i,j$ in distinct columns of $\mathscr T$ define a basis  of $\mathfrak m$ formed from the $x_{i,j}$.\\
Consider a set of pairwise disjoint \emph{composite right-going lines} in
$R^s_{C,C'}$: each such line is a strictly right-going path from a box of
$C$ to a box of $C'$ passing through each box of $R^s_{C,C'}$ exactly once.
The collection of all such systems of lines (forming a combinatorial
determinant) gives the monomials of $I^s_{C,C'}$
\cite[Prop.\,5.1.1(i)]{FJ5}.

\begin{lemma}[ {\cite[Lemma 4.2.5]{FJ1}, \cite[Prop.\,5.1.1(i)]{FJ5}}]
\label{lem:degree}
The number $d\bigl(\ell R^s_{C,C'}\bigr)$ of entries in the left rectangle
equals the degree $d$ of $I^s_{C,C'}$ as a polynomial in the $x_{i,j}$.
\end{lemma}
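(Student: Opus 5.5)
The plan is to read the degree off the combinatorial description of the monomials of $I^s_{C,C'}$ recalled just before the lemma. By \cite[Prop.\,5.1.1(i)]{FJ5}, each monomial of $I^s_{C,C'}$ corresponds to a system of pairwise disjoint composite right-going lines in $R^s_{C,C'}$. Each line runs from a box of $C$ to a box of $C'$, and the system visits every box of $R^s_{C,C'}$ exactly once. Each elementary segment $\ell_{i,j}$ of a composite line contributes one factor $x_{i,j}$. So the degree of a monomial equals the total number of elementary segments in its line system, and the lemma reduces to counting these segments.

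To count them, I will assign to each elementary segment its right endpoint. Three facts make this assignment a bijection onto the boxes of $\ell R^s_{C,C'} = R^s\cap\,]C,C']$.
\begin{enumerate}
\item[(i)] Lines are strictly right-going, so a right endpoint never lies in $C$. Hence every right endpoint lies in $]C,C']$.
\item[(ii)] The lines are disjoint and each box is passed through exactly once, so no box is the right endpoint of two segments.
\item[(iii)] Every box of $]C,C']\cap R^s$ is visited by some line, and that line enters it from the left. The line cannot start there, since lines start only in $C$. Hence every such box is the right endpoint of exactly one segment.
\end{enumerate}
It follows that every monomial has exactly $d(\ell R^s_{C,C'})$ factors. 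Multilinearity of $I^s_{C,C'}$ means that distinct segments in a monomial carry distinct coordinates, so the monomial really has degree equal to this count with no collapsing. Since all monomials have the same degree, $I^s_{C,C'}$ is homogeneous of degree $d$.

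The main obstacle is making sure the line systems produce nonzero contributions, so that $I^s_{C,C'}\neq 0$ and the degree is genuinely attained. Signs in the combinatorial determinant might cancel, but different line systems give different monomials, because a system is recovered from its set of segments. So no cancellation occurs, provided at least one system exists. Existence I will show directly. Columns of height $\ge s$ contain all $s$ rows, and neighbouring-pair columns of height $s$ have exactly $s$ rows. Intermediate columns of smaller height are covered row by row by the horizontal-type line through rows $1,\dots,s$, where the standard tableau's left-to-right increase gives the right-going property by property (b). Alternatively, one can cite \cite[Lemma 4.2.5]{FJ1}, where the degree is computed from the lowest-degree coefficient of the minor of $c\,\Id+X$. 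There, each of the $(n-s)$ minor entries outside $C$'s block contributes either $c$ or an $x$, and counting gives the same number.
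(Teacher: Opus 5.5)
Your proposal is correct and follows the paper's route: both read the degree off a monomial attached to a system of disjoint composite right-going lines, and both identify its number of factors with $|R^s_{C,C'}\setminus C| = |\ell R^s_{C,C'}|$. Your assignment of each segment to its right endpoint makes this count rigorous (the paper's description of the factors as $x_{i,j}$ with $i\in C$ is loose, since segments also start in interior columns), and you also derive homogeneity and non-vanishing via the horizontal line system, which the paper simply assumes.
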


\begin{proof}
Since $I^s_{C,C'}$ is homogeneous and a sum of monomials all of the same
degree, it suffices to compute the degree of any single monomial.  The
monomial arising from the system of pairwise disjoint composite right-going
lines covering $R^s_{C,C'}$ once each is a product of coordinate vectors
$x_{i,j}$ with $i \in C$ and $j$ varying through the interior columns: this
product has degree equal to $|R^s_{C,C'} \setminus C| = |\ell R^s_{C,C'}|$,
which is the number of entries in the left rectangle.  Hence
$d = d\bigl(\ell R^s_{C,C'}\bigr)$.
\end{proof}

\section{Reverse Tableaux}\label{sec:RT}
%==========================================================================
We recall the reverse-tableau construction.  Starting from the standard tableau and an ordered list of neighbouring pairs, each step recolours one entry red and moves a black copy down and to the left.  The main goal is to show that this operation is always well-defined and that it preserves enough order to control excluded roots.
All material in this section is recalled from \cite{FJ6} with full proofs,
so that the surjectivity argument in Sections~\ref{sec:vanish}--\ref{surjectivity}
is self-contained.

\subsection{Complete sequences of neighbouring pairs}\label{ssec:compseq}

\begin{definition}%[{\cite[3.5]{FJ6}}]
\label{def:compseq}
A \emph{complete sequence of pairs of neighbouring columns} is a total
ordering
\[
  \mathcal{P} = \bigl\{P_j := (C_{r_j}(s_j),\, C_{r'_j}(s_j))\bigr\}_{j=1}^{\mathbf{g}}
\]
of all $\mathbf{g}$ neighbouring pairs.  Here $s_j$ denotes the common height
of the $j$-th pair.  The corresponding BS invariants are ordered accordingly:
$I_j := I^{s_j}_{C_{r_j}, C_{r'_j}}$.
\end{definition}

The choice of complete sequence is part of the data of a reverse tableau.
Different Red Sets may require different complete sequences; in general, a
suitable union of complete sequences suffices to recover all reverse tableaux
\cite[3.5]{FJ6}.

\subsection{Definition and first properties}\label{ssec:RTdef}

\begin{definition}%[ {\cite[4.1]{FJ6}}]
\label{def:RT}
Fix a complete sequence $\mathcal{P}$.  A \emph{reverse tableau}
$\mathscr{R}^i$ %is a filling of $\mathscr{D}$ 
is a tableau with entries from $[1,n]$,
each entry coloured \emph{black} or \emph{red} that increase down the rows and along the columns , constructed inductively
starting from $\mathscr{R}^1 := \mathscr{T}$ (all entries black) and
building $\mathscr{R}^{i+1}$ from $\mathscr{R}^i$ by implementing the pair
$P_i$ via the procedure of Section~\ref{ssec:RTconstruction}.\\

The \emph{black height} of a column in $\mathscr{R}^i$ is the row index of
its lowest \emph{black} entry; the (total) \emph{height} is the row index of
its lowest entry of either colour.
\end{definition}

\begin{lemma}[ {\cite[Lemma 4.3.3, Lemma 4.3.4]{FJ6}}]
\label{lem:structure}
The following hold for any reverse tableau $\mathscr{R}^i$:
\begin{enumerate}
  \item[\upshape(i)] $\mathscr{R}^i$ is \emph{standard with multiplicities},
        entries increase strictly down each column and strictly from left to
        right along each row, though a given numerical value may appear in
        more than one box.
  \item[\upshape(ii)] For each numerical value $j \in [1,n]$ there is exactly
        one black entry.  The red entries of value $j$ (if any) form, together
        with the black entry, a \emph{reverse $j$-string}, a sequence of
        entries, all of value $j$, going strictly leftward and each one row
        higher than the previous, with the leftmost entry in black.
  \item[\upshape(iii)] At most one entry with a given numerical value appears
        in any given row of $\mathscr{R}^i$.
  \item[\upshape(iv)] If $k$ lies in the left boundary column $C^-$
        (in either colour), then it is the unique entry with value $k$ in
        $[C^-, C']$.
\end{enumerate}
\end{lemma}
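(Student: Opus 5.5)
The argument is an induction on $i$. The base case $\mathscr{R}^1=\mathscr{T}$ is immediate. All entries are black and each value $j\in[1,n]$ occurs once. Columns increase downward, and rows increase from left to right by property (b) of \S\ref{ssec:stdtab}. So (i)--(iii) hold, each reverse $j$-string being the single black entry, and (iv) holds because values are unique. For the inductive step, we suppose (i)--(iv) hold for $\mathscr{R}^i$ and analyse the single implementation step producing $\mathscr{R}^{i+1}$ from the pair $P_i=(C^-,C')$ of height $s_i$. By the construction recalled in \S\ref{ssec:RTconstruction}, this step recolours exactly one black entry $k$ red and places a new black copy of $k$ one row lower and to the left, inside the region governed by $[C^-,C']$. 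The plan is to check each property locally at the two boxes that change, since every other box is untouched.

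Property (ii) comes first, because it is essentially bookkeeping. The old black $k$ becomes red and exactly one new black $k$ is created, so the number of black entries of value $k$ remains one, and all other values are unaffected. The new black entry lies strictly to the left of, and one row below, the former leftmost (black) end of the reverse $k$-string. Appending it therefore extends the string by one step with the correct shape, and it becomes the new black leftmost entry. Property (iii) then follows from (ii). Entries of value $k$ lie in pairwise distinct rows along the string. The only new $k$ sits in a row one below the previous lowest-row member, and by the inductive hypothesis no other $k$ can already occupy that row, because every $k$ belongs to the string.

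The main obstacle is (i): showing that the new black $k$ fits strictly between its new vertical and horizontal neighbours. I would use the choice rule of the construction, namely the Enabling Proposition of \S\ref{ssec:enabling}. This rule selects the target box as the unique box in the relevant row of $[C^-,C'[$ where $k$ is larger than the entry to its left and above, and smaller than the entry to its right and below. The comparisons reduce to two facts. The first is the inductive standardness of $\mathscr{R}^i$. The second is that the box vacated in colour (now red) still carries value $k$, so the inequalities in its own row and column are unchanged. I expect the delicate case to be when the target box lies in $C^-$ or at the bottom of a column, which increases the column's height. There one compares against columns of height $s_i$ in $]C^-,C'[$, using neighbouring-ness: no column of height $s_i$ lies strictly between $C^-$ and $C'$.

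Finally, (iv) is proved by contradiction. Suppose $k\in C^-$ and a second entry of value $k$ lies in $[C^-,C']$. By (ii), both lie on the reverse $k$-string, which moves strictly leftward as it rises. The copy in $C^-$ would then have to be the black end or lie to the right of another copy inside $]C^-,C']$, one row lower. Strict row increase from (i), applied along the row through these boxes, together with the fact that entries of $C^-$ in $\mathscr{T}$ are smaller than all entries in later columns, forces the two values to differ. This contradiction proves (iv).
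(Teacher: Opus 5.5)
Your frame (induction on $i$, base case $\mathscr T$) is the paper's. The inductive step, however, rests on a false premise: you say the implementation changes only two boxes and that ``every other box is untouched''. The construction in \S\ref{ssec:RTconstruction} has a second step, the shifting of partial columns. Whenever $R_{s+1}\cap C'''$ is occupied, the lower parts (all entries, black or red, strictly below $R_s$) of a block of columns starting at $C'''$ move one column to the left in unison. They jump over columns of height $<s$ and stop at a column of height exactly $s$. This already happens in the paper's $(1,2,2,1)$ example: implementing $(C_1,C_4)$ first moves $3$ and $5$ from $C_2,C_3$ into $C_1,C_2$.

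Consequently (i) must be re-checked at every shifted box and at every new junction between $R_s$ and $R_{s+1}$. Likewise (ii) must be re-checked for every reverse $j$-string with red members in the shifted lower parts; your bookkeeping argument covers only the value $k$. The missing ingredient is the paper's key observation. Shifting preserves rows and translates a contiguous block of lower parts leftward into a column whose lower part was empty. So the left-to-right order in each row, and the relative position of consecutive members of each $j$-string, is unchanged. At each new junction, the $R_s$-entry of the receiving column is smaller than that of the donor column (row-increase), hence smaller than the donor's former $R_{s+1}$-entry. For (iii) shifting is harmless because it preserves rows, but this has to be said.

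Several auxiliary claims are also wrong.
\begin{enumerate}
\item[(a)] The target box is not ``selected by the Enabling Proposition as the unique box where $k$ fits between its neighbours''. $C'''$ is simply the first column of $S_i$ of height $\ge s$ to the left of $C''$, so invoking such a rule assumes (i) instead of proving it.
\item[(b)] The statement that no column of height $s_i$ lies strictly between $C^-$ and $C'$ holds in $\mathscr T$ but not in $\mathscr R^i$. Earlier implementations create pseudo-neighbouring columns of height $s$ inside the trapezium (see the $(1,2,3,3,1,2)$ example), and Step~2 explicitly stops at, or skips, such columns.
\item[(c)] Your proof of (iv) relies on entries of an earlier column being smaller than all entries of later columns. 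This holds in $\mathscr T$ but fails in $\mathscr R^i$: a red $k$ and its black copy sit in different columns with equal value, and shifting changes which entries occupy the position $C^-$. Moreover, by (iii) the two copies of $k$ lie in different rows, so row-increase cannot compare them anyway.
\end{enumerate}
The paper instead obtains (iv) directly from the construction: by the Enabling Proposition, the entry deposited in the left boundary by substitution is the unique representative of its value in $[C^-,C']$.
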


\begin{proof}
Parts (i)--(iii) are proved by induction on $i$ in \cite[Lemma 4.3.4]{FJ6}.
For $i = 1$ everything holds since $\mathscr{R}^1 = \mathscr{T}$ is standard
with no multiplicities.  For the inductive step, one checks that neither the
\emph{substitution} (recolouring the lowest black entry of a column red and
placing a new black entry one row below in an adjacent column to the left) nor
the \emph{shifting of partial columns} (moving the lower part of columns to
the left to make room) destroys the standard property or introduces two
entries of the same value in the same row.  The key observation for (i) is
that when a black entry $j$ is moved to an adjacent column to the left and one
row lower, it lands in a position strictly below and to the left of its origin,
and the shifting preserves the row and only moves columns leftward; consequently
the order of entries in rows and columns is maintained.  Part (iv) follows
from the construction in Section~\ref{ssec:RTconstruction}: by the Enabling
Proposition (Proposition~\ref{prop:enabling}), the black entry placed in the
left boundary by substitution is the unique representative of that value in
the interval $[C^-, C']$.
\end{proof}

\begin{definition}[Red Set of a reverse tableau %, {\cite[4.3.3]{FJ6}}
]
The \emph{Red Set} $R(\mathscr{R})$ of a reverse tableau $\mathscr{R}$ or simply the \emph{Red Set} of $\mathscr{R}$ is the
multi-set of all red entries (with multiplicities), where the multiplicity of
a value $j$ equals the number of red boxes with entry $j$.
\end{definition}

The cardinality of $R(\mathscr{R})$ equals $\mathbf{g}$, since exactly one
entry is recoloured red at each of the $\mathbf{g}$ implementation steps.\\ 

\begin{remark}[ ]\label{rem:flexibility}
Unlike the component tableaux of \cite{FJ5}, which are in one-to-one correspondence with their Red Sets, the mapping from reverse tableaux to Red Sets is surjective but not injective. We refer to this as the \textbf{flexibility} of the construction. 

This property is an essential feature of the factorisation argument in Section \ref{surjectivity}. Different complete sequences $\mathcal{P}$ (orders of implementation) may yield different reverse tableaux that ultimately define the same irreducible component by producing the same Red Set. We maintain the mapping through the \textbf{tableau} rather than directly to the Red Set because the tableau provides a more visual and algorithmic representation of the "paths" taken to reach each component. Thus, while the uniqueness of the component map $\phi: \{\text{Red Sets}\} \to \Irr(\mathscr{N})$ is preserved, the reverse tableaux provide the necessary geometric flexibility to reach every such component.
\end{remark}

\subsection{The left boundary column and the set $S_i$}\label{ssec:LBC}

Fix a complete sequence $\mathcal{P}$ and a pair 
$P_i=(C_i,C_i')$, of height $s$, to ease of notation, we write
$(C,C')=(C_i,C_i')$ throughout this subsection.  Let $\mathscr{R}^i$ be the reverse tableau obtained after
implementing $P_1, \dots, P_{i-1}$.  Let $\mathscr{I}_i$ denote the set of
all numerical entries appearing in the columns of $[C, C']$ in $\mathscr{R}^i$
(of either colour).

\begin{remark}
A complete sequence $\mathcal P$ has no specifies an order for implementing all
neighbouring pairs of columns. Varying this order is necessary in order to
generate all the possible reverse tableaux associated with the initial tableau.
Nevertheless, different complete sequences need not give distinct outcomes:
two different orders may lead to the same set of red entries, and in some cases
to the same reverse tableau. We shall return to this point later.
\end{remark}
\medskip
While the concept of implementing a complete sequence $\mathcal{P}$ was
suggested in~\cite{FJ6}, the present work formalizes the Enabling
Proposition, the Factorization Mechanism  describe  Section~6 is 
needed to prove that such sequences actually track irreducible components
of the nilfibre without any loss of codimension.
\medskip
\begin{definition}%[{\cite[4.1.1]{FJ6}}]
\label{def:LBC}
The \emph{left boundary column} $C^-$ is the rightmost column of $\mathscr{D}$
such that at least one entry of $\mathscr{I}_i$ (in either colour) lies in
the columns of $[C^-, C']$.  The set $S_i$ consists of all columns inf
$[C^-, C']$ of \emph{black height} $\le s$ and (total) \emph{height} $\ge s$.
\end{definition}

Equivalently, $C^-$ is the leftmost column containing an entry whose provenance can be traced back to $C$ (as tracked through the
successive implementations).  The columns of $S_i$ are precisely those
eligible to have their lowest black entry substituted when implementing $P_i$.

\subsection{The Enabling Proposition}\label{ssec:enabling}

The following proposition is the cornerstone of the entire construction.  Its
proof requires tracking carefully how the height of the left boundary column
evolves through successive implementations.\\ The Enabling Proposition guarantees that, at each implementation step, there is a unique admissible leftmost place where the reverse-tableau move can occur.  Without this control, the construction would not be canonical enough to track factors of the invariants.

\begin{proposition}%[{\cite[Prop.\,4.2]{FJ6}}]
\label{prop:enabling}
At most the leftmost column of $S_i$ has black height $\le s$.
Moreover, that leftmost column has total height exactly $s$.
\end{proposition}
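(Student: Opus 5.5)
We argue by induction on $i$, proving simultaneously the statement of Proposition~\ref{prop:enabling} and the auxiliary invariant that for every pair $P_{i'}=(C_{i'},C'_{i'})$ not yet implemented, of height $s'$, the columns of $[C^-_{i'},C'_{i'}]$ other than the leftmost one have black height $>s'$ or total height $<s'$, unless they belong to $S_{i'}$. For $i=1$ we have $\mathscr{R}^1=\mathscr{T}$ and $C^-=C$. Since $(C,C')$ is neighbouring, no column strictly between $C$ and $C'$ has height $s$. The column $C'$ has height $s$ but its entries are all black, so its black height is $s$. This shows the base case needs more care: we must first interpret $S_i$ as excluding $C'$ or as counting it only through black entries already moved. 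The cleanest formulation is to compare against the column $C$ itself. In $\mathscr{T}$ the only column of $[C,C']$ other than $C'$ having height exactly $s$ is $C$, and columns of $]C,C'[$ of height $>s$ have black height $>s$. So I would restate the set of competitors as $S_i\cap[C^-,C'[$, and show that at most its leftmost member has black height $\le s$, with total height exactly $s$.

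For the inductive step, we analyse how one implementation of $P_{i-1}$ modifies $\mathscr{R}^{i-1}$. By Lemma~\ref{lem:structure}, the move recolours the lowest black entry of the leftmost column of $S_{i-1}$ red. It then places a black copy one row lower in the adjacent column to the left, shifting partial columns leftward. Thus exactly one column loses one unit of black height, and its left neighbour gains one box, so its total height and black height each increase by one. All other columns change only by leftward translation of their lower parts $C^{>u}$. The key step is a case split according to the relative position of the pair $(C,C')=P_i$ and the previously implemented $P_{i-1}$:
\begin{enumerate}
\item[(a)] the intervals $[C^-_{i-1},C'_{i-1}]$ and $[C,C']$ are disjoint;
\item[(b)] they are nested;
\item[(c)] they overlap.
\end{enumerate}
In case (a) nothing in $[C^-,C']$ changes except possibly the location of the left boundary. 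In case (b) the heights $s_{i-1}$ and $s$ differ (neighbouring pairs of equal height cannot nest). We then use Lemma~\ref{lem:structure}(iii),(iv) to show the new black entry lands either strictly below row $s$ or in the leftmost column of $S_i$, which becomes the new $C^-$ since it now carries an entry tracing back to $C$.

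The main obstacle is case (c), and more generally showing that the moved black entry can never create a \emph{second} column of black height $\le s$ and total height $\ge s$ inside $[C^-,C']$. There I would track the quantity $h(D)=$ (total height) $-$ (black height) for each column $D$. The inductive claim is that inside $[C^-,C']$ this quantity is nondecreasing from right to left, strictly so across columns of total height $\ge s$. Under this claim, black height $\le s$ together with total height $\ge s$ forces the column to be leftmost. Verifying that substitution plus shifting preserves this monotonicity is the expected hard computation; I would first try it when $s_{i-1}<s$, then when $s_{i-1}>s$, using the strict row increase from Lemma~\ref{lem:structure}(i). Finally, the fact that the leftmost column has total height exactly $s$ follows from the definition of $C^-$ as the column receiving the value traced from $C$. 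Since that value sits in row $s$ of $C$ originally and moves are one row down per column left, the total height equals $s$ once it is checked that no later shift lengthens $C^-$ before $P_i$ is implemented.
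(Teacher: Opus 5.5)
There is a genuine gap, and it starts with how you repair the statement. You are right that, read literally with ``$\le s$'', the first assertion already fails for $\mathscr{R}^1=\mathscr{T}$: there $C$ and $C'$ both lie in $S_1$ with black height $s$. But excluding $C'$ while keeping ``$\le s$'' gives a false claim. Take the example $(1,2,2,1)$ of Section~\ref{sec:example} with the order $(C_1,C_4),(C_2,C_3)$. After the first implementation one has $C^-=C_1$, and both columns $(1,3)$ and $(2,5)$ of $S_2\cap[C^-,C'[$ have total and black height $2$. The next step of the construction then recolours the $5$ in the non-leftmost one of these columns.

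This is exactly how the proposition is used in Step~1, where a non-leftmost $C''\in S_i$ ``has black height exactly $s$''. Your inductive step also misstates the move: Step~1 recolours a non-leftmost column of $S_{i-1}$, not the leftmost one. The intended first assertion is the strict one: no column of $S_i$ other than the leftmost has black height $<s$ (such a column would carry a red entry in $R_s$). With that reading the base case is trivial.

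Your auxiliary claims cannot establish the false version, and they fail on the same example:
\begin{itemize}
\item The invariant in your first sentence is a tautology, since a column outside $S_{i'}$ has by definition black height $>s'$ or total height $<s'$.
\item Strict monotonicity of $h$ across columns of total height $\ge s$ fails already at $i=1$, where $h(C)=h(C')=0$. It also fails for the height-$2$ columns $(1,3),(2,5),(4,6)$.
\item Your case (b) claim fails, because the new black $6$ lands in row $R_s$ of $C_3$, which is not the leftmost column of $S_2$.
\end{itemize}
Case (c), which you identify as the crux, is left open.

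The missing idea is the mechanism the paper uses. The first assertion is a statement about red entries in $R_s$, and it follows from Lemma~\ref{lem:nonacq}. As long as $(C,C')$ itself has not been implemented, $\ell\mathcal{T}^s_{C,C'}(i)$ cannot acquire a red entry in $R_s$, for two reasons. Trapezia of equal height meet only along a common boundary (Lemma~\ref{lem:nooverlap}). And the black right boundary (Lemma~\ref{lem:rightblack}) blocks red entries shifted in from the right. So only the left boundary column can carry such an entry.

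The second assertion is proved by following $C$, and then $C^-$, through each earlier implementation $P_j$:
\begin{itemize}
\item If $C\notin[C_{r_j},C'_{r_j}[$, its height is unchanged.
\item If $s_j\ge s$, the shift only moves entries strictly below $R_{s_j}$, hence below $R_s$.
\item If $s_j<s$, the lower part of $C$ is translated leftwards as a block without changing rows, so the new left boundary column still ends in $R_s$.
\end{itemize}
Your closing paragraph defers exactly this check (``once it is checked that no later shift lengthens $C^-$''). Its heuristic of one row down per column moved left would, if anything, suggest total height $s+1$.
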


\begin{proof}
The second assertion implies the first, since a column with total height
exactly $s$ and black height $\le s$ must have a red entry in $R_s$; and by
Lemma~\ref{lem:nonacq} below, red entries in $R_s$ inside the relevant
trapezium can only arise at the leftmost column (by the time $P_i$ is being
implemented, the pair $(C,C')$ itself has not yet been used, so the only
source of red entries in $R_s$ within $\ell\mathcal{T}^s_{C,C'}(i)$ is the
left boundary).

We prove the leftmost column of $S_i$ has total height exactly $s$.  It
suffices to show that the column $C^-$ has height exactly $s$ in $\mathscr{R}^i$.

Consider the effect of implementing the preceding pair $P_{i-1} = (C_{r_{i-1}},
C'_{r_{i-1}})$ of height $s_{i-1}$ on the column $C$ (or $C^-$ as it was
after implementing $P_1, \dots, P_{i-2}$):

\begin{itemize}
  \item If $C$ does not lie in $[C_{r_{i-1}}, C'_{r_{i-1}}[$, the
        implementation of $P_{i-1}$ does not affect $C$ and its height is
        unchanged.
  \item If $C \in [C_{r_{i-1}}, C'_{r_{i-1}}[$ and $s \le s_{i-1}$, then the
        shifting process associated to $P_{i-1}$ skips over $C$ (since
        shifting only moves the \emph{lower part} of columns, i.e.\ entries
        strictly below $R_{s_{i-1}}$, and these lie strictly below $R_s$).
        So the height of $C$ is preserved.
  \item If $C \in [C_{r_{i-1}}, C'_{r_{i-1}}[$ and $s > s_{i-1}$, then
        the shifting could move the lower part of $C$ to a column strictly to its
        left, creating a new left boundary column $C^-$.  However, the
        shifting operation moves entries between columns without changing their
        rows; hence the total height of the column $C^-$ is  determined by the row
        of its lowest entry  is unchanged which is $R_s$.
\end{itemize}

In all three cases the resulting left boundary column has total height exactly
$s$.  Since this reasoning applies to every step $j < i$ by induction (the
base case $i = 1$ being clear as $\mathscr{R}^1 = \mathscr{T}$ and $C^- = C$
has height $s$), the proposition follows.
\end{proof}

\subsection{Construction of $\mathscr{R}^{i+1}$}\label{ssec:RTconstruction}

Assume that $\mathscr{R}^i$ has been constructed and that the Enabling
Proposition holds for all pairs $P_j$ with $j<i$. We describe the
implementation of the next pair $P_i=(C_i,C_i')$, of height $s$, which
produces $\mathscr{R}^{i+1}$. For ease of notation, we write
$(C,C')=(C_i,C_i')$ throughout this subsection.

\bigskip

\paragraph{Step 1: Substitution.}
Choose any column $C''$ in $S_i$ that is \emph{not} the leftmost one.  By
Proposition~\ref{prop:enabling}, $C''$ has black height exactly $s$, so its
lowest black entry $j$ lies in $R_s$.  Perform the following:
\begin{enumerate}
  \item Recolour the entry $j$ in $C''$ red (it remains in $R_s$).
  \item Let $C'''$ be the first column of $S_i$ of height $\ge s$ lying
        strictly to the left of $C''$.  Place a new black entry of value $j$
        in row $R_{s+1}$ of $C'''$.  In particular this new black entry lies
        in $[C^-, C']$.
\end{enumerate}
The choice of $C''$ within $S_i$ may vary; different choices lead to
different elements of $\mathscr{R}^{i+1}$, and hence to different final
reverse tableaux.

\paragraph{Step 2: Shifting of partial columns.}
If $R_{s+1} \cap C'''$ is already occupied, we must make room.  Starting from
$C'''$, shift \emph{in unison} the lower parts of columns (the entries in
rows strictly below $R_s$) one column to the left, proceeding leftward.
Intermediate columns of height $< s$ are skipped (jumping over them avoids
creating gaps in columns).  The shifting terminates when a column of height
exactly $s$ is reached.  By Proposition~\ref{prop:enabling}, such a column
exists no later than $C^-$.

\begin{definition}%[ {\cite[4.3.2]{FJ6}}]
\emph{Extreme shifting} refers to the choice of skipping over \emph{all}
intermediate columns of height exactly $s$ (not just those of height $< s$)
and shifting all the way to $C^-$.  This maximises the number of excluded
roots in the resulting tableau and was used in \cite{FJ6} to establish
certain inclusion relations between excluded sets.
\end{definition}

\begin{remark}\label{rmk:flexibility}
%The flexibility in the choice of $C''$ and the degree of shifting are exactly
%the sources of non-uniqueness of reverse tableaux with a given Red Set.  This
%flexibility is indispensable for the factorisation argument: depending on the
%order in which one implements the pairs and the choices made at each step,
%different reverse tableaux with the same Red Set may be needed to exhibit all
%the required factorisations of the BS invariants.  The key point, proved in
%\cite[Thm.\,9.3]{FJ6}, is that all reverse tableaux with the same Red Set
%define the same variety via their excluded root sets, so any choice is valid
%for the purposes of identifying the component.
The freedom in choosing $C''$ and in fixing the amount of shifting is precisely
the mechanism behind the non-uniqueness of reverse tableaux with a fixed Red
Set. This non-uniqueness is not a defect of the construction, but rather an
essential feature of the factorisation argument. Different order of complete sequences
$\mathcal P$ of neighbouring pairs, together with the choices made at the
successive implementation steps, may lead to distinct reverse tableaux, some of
which share the same Red Set. The decisive point, proved in
\cite[Thm.\,9.3]{FJ6}, is that reverse tableaux with the same Red Set determine
the same variety via their excluded root sets. Consequently, any such tableau
may be used to identify the corresponding irreducible component.
\end{remark}

\subsection{The trapezium and its properties}\label{ssec:trap}

The \emph{trapezium} is the key geometric object that tracks the evolution of
the rectangle $R^s_{C,C'}$ as the implementation steps proceed.

\begin{definition}%[ {\cite[4.4.1--4.4.3]{FJ6}}]
\label{def:trap}
Let $(C,C')$ be a neighbouring pair of height $s$, and fix a stage $j \le i$.
For $t \in [1,s]$, let $b^j_t$ be the rightmost box in row $R_t$ containing,
in $\mathscr{R}^j$, the numerical value of $R_t \cap C$ (in either colour);
let $b'^j_t$ be the leftmost box in row $R_t$ containing the numerical value
of $R_t \cap C'$.

The \emph{left boundary} $B^j$ is the set of boxes $\{b^j_t\}_{t=1}^{s}$.
The \emph{right boundary} $B'^j$ is $\{b'^j_t\}_{t=1}^{s}$.

The \emph{$j$-th trapezium} is
\[
  \mathcal{T}^s_{C,C'}(j) := R^s \cap [B^j, B'^j],
\]
and the \emph{left trapezium} is
$\ell\mathcal{T}^s_{C,C'}(j) := R^s \cap\, ]B^j, B'^j]$.
\end{definition}

By construction, the numerical entries of $B^j$ (resp.\ $B'^j$) agree
row-by-row with those of $C$ (resp.\ $C'$) in $\mathscr{T}$.  For $j = 1$
the trapezium reduces to the rectangle: $\mathcal{T}^s_{C,C'}(1) = R^s_{C,C'}$.

The following three lemmas establish the key stability properties of the
trapezium, which are used  in the proofs of vanishing and
factorisation.

\begin{lemma}[ {\cite[Lemma 4.4.4]{FJ6}}]
\label{lem:rightblack}
For all $j \le i$, the right boundary $B'^j$ of $\mathcal{T}^s_{C,C'}(j)$
consists entirely of black entries, and $b'^j_t \in R_t$ for each
$t \in [1,s]$.  In particular $B'^j$ forms a downward-going left-skewed
line of black entries, each coinciding in numerical value (row by row) with
the corresponding entry of $C'$.
\end{lemma}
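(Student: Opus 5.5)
We argue by induction on $j$, for $j$ running from $1$ to $i$, where $i$ is the stage at which the pair $(C,C')$ is implemented. For $j=1$ we have $\mathscr{R}^1=\mathscr{T}$, in which every entry is black. The value in $R_t\cap C'$ appears only once, namely in $C'$ itself, so $b'^1_t=R_t\cap C'$ lies in $R_t$ and is black, and $B'^1$ is just the column $C'$. The inductive step is to show that implementing a pair $P_j=(C_j,C_j')$ of height $s_j$, with $j<i$ and $P_j\neq(C,C')$, turns a right boundary with these properties into another one.

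By Lemma~\ref{lem:structure}(ii), the value $v_t$ of $R_t\cap C'$ occurs in $\mathscr{R}^j$ as a reverse $v_t$-string. This string goes strictly leftward, each entry one row higher than the previous one, and its leftmost entry is black. Since we only look at occurrences lying in row $R_t$, Lemma~\ref{lem:structure}(iii) says there is at most one such box. So $b'^j_t$ is simply the unique entry of value $v_t$ in $R_t$, provided it exists. We must therefore show two things.

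\begin{enumerate}
\item[(a)] No implementation step moves the value $v_t$ out of row $R_t$ entirely.
\item[(b)] The entry of value $v_t$ in $R_t$ is never recoloured red.
\end{enumerate}

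For (a): shifting never changes rows. A substitution only creates a new black copy one row lower, in $R_{s_j+1}$, and keeps the red entry in place. So the occurrence in $R_t$ persists.

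For (b), we separate cases according to how $s_j$ and the position of $[C_j^-,C'_j]$ compare with $s$ and $C'$. Only the lowest black entry of some non-leftmost column of $S_j$, lying in $R_{s_j}$, is ever recoloured. This can affect $v_t$ only if $t=s_j$ and the box $b'^j_t$ lies in a column of $S_j$ that is not the leftmost one. We must rule this out.

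\begin{itemize}
\item If $s_j=s$: the pair $(C_j,C'_j)$ is a different neighbouring pair of the same height, so its columns of height $s$ lie in an interval disjoint from $]C,C'[$. The boundary $b'^j_s$ descends from $C'$, and $C'$ is itself an original column of height $s$. We then use Lemma~\ref{lem:structure}(iv) and the definition of $C_j^-$ to show that when $C'=C_j$, the $C'$-entry sits in the leftmost column of $S_j$, and is therefore never substituted. When $C'=C'_j$, the substituted entry lies strictly inside and belongs to a column other than the one carrying $v_s$.
\item If $s_j\neq s$: the value of $R_{s_j}\cap C'$ belongs to a column of height $s\neq s_j$. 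We argue that its current box is not among the eligible columns, because their black height is $\le s_j$ while their total height is $\ge s_j$, and this rules out a black entry of $C'$ occupying that row.
\end{itemize}

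Finally, the left-skewed, downward-going shape follows from (i) of Lemma~\ref{lem:structure}. The values $v_1<\dots<v_s$ in successive rows can only have been pushed leftward by shifting of lower parts, and lower rows are shifted at least as often as higher ones. Hence the column index of $b'^j_t$ is weakly decreasing in $t$. The main obstacle is step (b) in the case $s_j=s$ with $C'=C_j$. There we must carefully use the Enabling Proposition~\ref{prop:enabling} to guarantee that the $C'$-column is the leftmost element of $S_j$, and hence is never recoloured.
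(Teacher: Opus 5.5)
Your reduction to (a) and (b) is fine, and so is the case $s_j>s$. The case $s_j<s$ of (b), however, is not proved, and it is the substantive case (the only one the paper's argument actually works through). You write that the eligibility conditions (black height $\le s_j$, total height $\ge s_j$) \emph{rule out a black entry of $C'$ occupying that row}. That is a non sequitur: a column whose lowest black entry is $v_{s_j}\in R_{s_j}$ satisfies both conditions exactly, so nothing has been excluded.

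What must be shown is that, before stage $i$, the column carrying $v_{s_j}$ never loses all of its black entries below $R_{s_j}$. This is a real issue, because shifting of lower parts does detach the part of a column below some $R_{s_k}$ from the part above it. The paper supplies the missing idea:
\begin{itemize}
\item For $v_{s_j}$ to be substituted, its column must have black height $s_j$, so the entry beneath it must previously have been moved away.
\item That can only happen through the shifting performed when a neighbouring pair of height $s_j$ surrounding $C'$ is implemented, and within that implementation the substitution precedes the shift.
\item Recolouring $v_{s_j}$ afterwards would therefore require a second neighbouring pair of height $s_j$ surrounding $C'$. This is impossible, because neighbouring pairs of equal height cannot overlap.
\end{itemize}
Without this uniqueness-of-the-surrounding-pair argument, or some substitute, your induction does not close.

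You have also misplaced the difficulty. The case $s_j=s$, $C'=C_j$ that you call the main obstacle is the easy one. By the induction hypothesis, the only entries descended from $C'$ are the black $v_1,\dots,v_s$. So the column of $v_s$ is $C_j^-$, and since $S_j\subset[C_j^-,C'_j]$ it is never a non-leftmost member of $S_j$; hence it is never substituted. Your sub-case $C'=C'_j$ is vacuous: a column has a unique left neighbour of its own height, which forces $(C_j,C'_j)=(C,C')$.

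Finally, your justification of the skew shape (\emph{lower rows are shifted at least as often as higher ones}) fails when $v_t$ and $v_{t+1}$ already lie in different columns, since a shifting chain may contain one of them and not the other. Use Lemma~\ref{lem:structure}(i) directly instead. If $b'^j_{t+1}$ lay strictly to the right of $b'^j_t$, the box above $b'^j_{t+1}$ would hold a value strictly between $v_t$ and $v_{t+1}=v_t+1$, which is impossible.
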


%\begin{proof}
%We argue by induction on $j \le i$.  For $j = 1$ it holds since
%$\mathscr{R}^1 = \mathscr{T}$ and $C'$ is a column of $\mathscr{T}$ with all
%black entries.
%
%Suppose the claim holds for $j$ and we implement a pair $P_j = (C_1, C'_1)$
%of height $t$.  If $t > s$, no entry in $R^s$ is affected by substitution
%(which acts in $R_t$) or by shifting (which moves entries strictly below
%$R_t$, hence strictly below $R_s$), so $B'^{j+1} = B'^j$.
%
%Suppose $t < s$.  For a black entry in $R_t \cap C'$ to be recoloured red by
%substitution, the column containing it would need black height $t$, meaning
%its entry in $R_{t+1}$ was already moved to the left by a previous
%implementation.  This in turn would have required a neighbouring pair
%$(C_{(2)}, C'_{(2)})$ of height $t$ surrounding $C'$.  To then recolour the
%resulting black entry in $R_t \cap C'$ red again (a second time) would require
%yet another pair $(C_{(3)}, C'_{(3)})$ of height $t$ surrounding $C'$  but
%then $C'_{(3)}$ would need to lie strictly to the left of $C_{(2)}$, which in
%turn lies strictly to the left of $C'$.  Since $C'_{(3)}$ must also lie to the
%right of $C'$ to surround it, this is impossible.  Hence the black entry of
%$C'$ in $R_t$ is never recoloured, and by induction $B'^{j+1}$ remains black.
%\end{proof}

\begin{lemma}[ {\cite[Lemma 4.5.5]{FJ6}}]
\label{lem:nooverlap}
Let $P_{i_1}=(C_1, C'_1)$ and $P_i=(C, C')$ be two distinct neighbouring pairs of the
same height $s$, with $(C_1, C'_1)$ to the left of $(C, C')$.  Then for all
$j \le \min(i_1, i)$ the trapezia $\mathcal{T}^s_{C_1,C'_1}(j)$ and
$\mathcal{T}^s_{C,C'}(j)$ can overlap %only along a common boundary
.
\end{lemma}

\begin{proof}
Either $C$ lies strictly to the right of $C'_1$, or $C = C'_1$.

In the first case, since $\mathscr{R}^j$ is standard with multiplicities
(Lemma~\ref{lem:structure}(i)), all entries of $C'_1$ in $R^s$ are
numerically strictly less than all entries of $C$ in $R^s$.  Since entries
increase strictly along rows in $\mathscr{R}^j$, the right boundary $B'^j_1$
(which has the same numerical values as $C'_1$ by
Lemma~\ref{lem:rightblack}) lies row-by-row strictly to the left of $B^j$
(which has the same numerical values as $C$).  Hence the trapezia are
separated.

In the second case, $B'^j_1$ and $B^j$ both have the same numerical values
as the common column $C'_1 = C$, row-by-row.  By Lemma~\ref{lem:structure}(ii)
these values appear with the same colouring, so $B'^j_1 = B^j$ and the two
trapezia share exactly the common boundary.
\end{proof}

\begin{lemma}[ {\cite[Lemma 4.4.6]{FJ6}}]
\label{lem:nonacq}
The left trapezium $\ell\mathcal{T}^s_{C,C'}(j)$ can acquire a red entry in
$R_s$ only by the implementation of the pair $(C,C')$ itself.
\end{lemma}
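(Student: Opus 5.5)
We argue by induction on the stage $j$, following how red entries are created. By Step~1 of Section~\ref{ssec:RTconstruction}, a red entry appears only through the substitution step of some implemented pair $P_m=(C_1,C'_1)$ of height $t$. That step recolours a black entry lying in $R_t$. Shifting never recolours anything and keeps rows. Hence a red entry can enter $R_s\cap\,]B^j,B'^j]$ in only two ways: (a) a substitution with $t=s$ whose recoloured entry falls inside the left trapezium; or (b) a shift that moves an existing red entry of $R_s$ into the trapezium, or moves the boundary $B^j$ so that a red entry becomes enclosed. Case (b) is easy to dismiss. Shifting only moves entries strictly below $R_t$. A red entry in $R_s$ moves only when $t<s$, and then entries of $C$'s values in $R_s$ move in unison with their neighbours. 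This is the same bookkeeping as in the third bullet of the proof of Proposition~\ref{prop:enabling}, and it preserves row order by Lemma~\ref{lem:structure}(i). Moreover, the boundary boxes $b^j_t$ are defined by numerical values, which are row-order invariants.

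The main case is (a): a pair $P_m=(C_1,C'_1)\neq (C,C')$ of the same height $s$, with $m<j$. Its substitution recolours the lowest black entry of some non-leftmost column of $S_m$. That column lies in $[C_1^-,C'_1]$, and by Definition~\ref{def:LBC} and Lemma~\ref{lem:structure}(iv) it contains an entry lying in the trapezium $\mathcal{T}^s_{C_1,C'_1}(m)$ but not on its left boundary. I would therefore show that the recoloured box lies in $\ell\mathcal{T}^s_{C_1,C'_1}(m)$. By Lemma~\ref{lem:nooverlap}, this set meets $\ell\mathcal{T}^s_{C,C'}(m)$ at most along a common boundary. If $(C_1,C'_1)$ is to the left of $(C,C')$, the only shared boxes form $B'^m_1=B^m$, which lie outside the left trapezium $]B^m,B'^m]$. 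If $(C_1,C'_1)$ is to the right, the shared boxes form $B'^m=B^m_1$. These boxes are black by Lemma~\ref{lem:rightblack}, and the recoloured box is not on $B^m_1$ because the leftmost column of $S_m$ is excluded. So no red entry of $R_s$ is created inside $\ell\mathcal{T}^s_{C,C'}$. Subsequent pairs preserve this by the case (b) argument.

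The remaining sub-case is $t\neq s$ in (a). Substitution then recolours an entry in $R_t$ with $t\ne s$, so no red entry appears in $R_s$ at all. Together with the inductive step, this gives the lemma. I expect the main obstacle to be the claim in case (a) that the recoloured box lies in $\ell\mathcal{T}^s_{C_1,C'_1}(m)$, and not merely in the column range $[C_1^-,C'_1]$. For this I would first use Proposition~\ref{prop:enabling}: it forces the non-leftmost columns of $S_m$ to have black height exactly $s$, which pins their lowest black entries to positions strictly right of $B^m_1$. Then I would use the fact that entries increase along rows, which places these entries weakly left of $B'^m_1$.
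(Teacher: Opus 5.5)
Your argument is essentially the paper's: red entries of $R_s$ arise only from substitutions by height-$s$ pairs; Lemma~\ref{lem:nooverlap} lets a substitution by a pair $(C_1,C_1')\neq(C,C')$ meet $\mathcal{T}^s_{C,C'}$ at most on a shared boundary; and shifting cannot carry a red entry across the value-defined boundaries (the paper says the black right boundary of Lemma~\ref{lem:rightblack} blocks it, you say row order is preserved), and you are in fact more complete than the paper, whose proof only discusses the pair sharing $B^j$ and not the right neighbour $C_1=C'$, whose shared boundary $B'^m$ does lie in $\ell\mathcal{T}^s_{C,C'}$. The containment of the recoloured box in $\ell\mathcal{T}^s_{C_1,C_1'}(m)$ that you flag is likewise taken for granted in the paper, but it must come from $S_m$ being confined to that trapezium rather than from row monotonicity (in the paper's $(1,2,2,1)$ example the column $(4,6)$ of $\widetilde{\mathscr R}^2$ lies in the column range of the height-$2$ pair, yet its black $6$ sits to the right of the right-boundary entry $5$ of $R_2$), and for $C_1=C'$ you only need containment in $\mathcal{T}^s_{C_1,C_1'}(m)$, since Lemma~\ref{lem:rightblack} at stage $m+1$ already forbids recolouring a box of $B'^m$.
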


\begin{proof}
\textit{Via substitution.}  A black entry $b \in R_s$ inside the left
trapezium can be recoloured red only by the implementation of some neighbouring
pair $(C_1, C'_1)$ of height $s$ whose trapezium overlaps with
$\ell\mathcal{T}^s_{C,C'}(j)$.  By Lemma~\ref{lem:nooverlap}, this can only
happen if $(C_1, C'_1)$ shares the left boundary $B^j$ with $(C,C')$.  But
then $b$ lies in $B^j$, which is the left boundary of
$\ell\mathcal{T}^s_{C,C'}(j)$ and hence is \emph{not} inside
$\ell\mathcal{T}^s_{C,C'}(j) = R^s \cap\, ]B^j, B'^j]$.

\textit{Via shifting.}  A pre-existing red entry in $R_s$ to the right of
$\ell\mathcal{T}^s_{C,C'}(j)$ could in principle be shifted leftward into the
left trapezium.  However, by Lemma~\ref{lem:rightblack}, the right boundary
$B'^j$ consists entirely of black entries; therefore any red entry in $R_s$
from the right is blocked by $B'^j$ and cannot enter the left trapezium.
\end{proof}

\begin{lemma}[ {\cite[4.4.7]{FJ6}}]\label{lem:nointerf}
%Let $P_j=(C_j,C_j')$ be a pair of height $s' $ with $j < i$ where $]C_j,C_j']$ intersect $]C,C']$.   Then:
%\begin{enumerate}
%  \item[\upshape(a)] Implementing $P_j$ with $s'> s$ does not change the numerical
%        entries of $\mathcal{T}^s_{C,C'}(i)$ by substitution, because the
%        recolouring occurs in $R_{s'}$ and the new black entry appears in
%        $R_{s'+1}$, both outside $R^s$. Implementing $P_j$ with $s'< s$ does not change the black numerical
%        entries of $\mathcal{T}^s_{C,C'}(i)$ by substitution, because the
%        recolouring occurs in $R_{s'}$ and the new black entry appears in
%        $R_{s'+1}$, both inside $R^s \cap ]R^s \cap\, ]B^j, B'^j]]$.
%  \item[\upshape(b)] Implementing $P_j$ with $s'> s$  does not change the entries of
%        $\mathcal{T}^s_{C,C'}(i)$ by shifting, because shifting moves only
%        rows strictly below $R_{s'}$, hence strictly below $R_s$. Implementing $P_j$ with $s'< s$  does not change the entries of
%        $\mathcal{T}^s_{C,C'}(i)$ by shifting, because shifting moves only
%        rows strictly below $R_{s'}$, but stay in side the bouderies  $R_s\cap$.
%  \item[\upshape(c)] The numerical entries of $\ell R^s_{C,C'}$ all appear in
%        the left trapezium $\ell\mathcal{T}^s_{C,C'}(i)$, possibly with a
%        red duplicate in addition to the black entry.
%\end{enumerate}
Let $P_j=(C_j,C_j')$ be a pair of height $s'$, with $j<i$, such that $]C_j,C_j']$ intersects $]C,C']$. Then:
\begin{enumerate}  \item[\upshape(a)]  If $s'>s$, implementing $P_j$ does not change the numerical entries of  $\mathcal{T}^s_{C,C'}(i)$ by substitution. Indeed, the recolouring occurs in  $R_{s'}$ and the new black entry appears in $R_{s'+1}$, both outside $R^s$.  If $s'<s$, implementing $P_j$ does not change the black numerical entries of  $\mathcal{T}^s_{C,C'}(i)$ by substitution. Indeed, if a black column of  height $s'$ in $]C_j,C_j']$ intersects $]C,C']$, then the substitution is  confined to the rows $R_{s'}$ and $R_{s'+1}$, both inside $R^s$, and remains  within the interval bounded by $B^j$ and $B'^j$. 
 \item[\upshape(b)]  If $s'>s$, implementing $P_j$ does not change the entries of  $\mathcal{T}^s_{C,C'}(i)$ by shifting. Shifting affects only rows strictly  below $R_{s'}$, hence strictly below $R_s$.  If $s'<s$, implementing $P_j$ does not change the entries of  $\mathcal{T}^s_{C,C'}(i)$ by shifting outside the relevant boundary interval.  All shifted entries remain inside the region bounded by $B^j$ and $B'^j$. 
 \item[\upshape(c)]  All numerical entries of $\ell R^s_{C,C'}$ occur in the left trapezium  $\ell\mathcal{T}^s_{C,C'}(i)$, possibly together with an additional red  duplicate.
 \end{enumerate}\end{lemma}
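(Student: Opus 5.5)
We prove (a)--(c) together by induction on the stage $j<i$, using the construction of Section~\ref{ssec:RTconstruction} and the fact that a substitution for a pair of height $s'$ acts only in rows $R_{s'}$ and $R_{s'+1}$, while the accompanying shift moves only entries strictly below $R_{s'}$. The base case $j=1$ is immediate: $\mathscr{R}^1=\mathscr{T}$, the trapezium is the rectangle $R^s_{C,C'}$, and (c) holds with no red duplicates. For the inductive step we assume the statement for $\mathscr{R}^j$ and implement $P_j=(C_j,C'_j)$ of height $s'$ with $]C_j,C'_j]\cap\,]C,C']\neq\emptyset$. Note that $s'\neq s$ is automatic for $P_j\neq P_i$ once the intervals overlap: two neighbouring pairs of the same height cannot have overlapping interiors, since a column of height $s$ would lie strictly between the members of one of them.

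\emph{Case $s'>s$.} The recoloured entry lies in $R_{s'}$ and the new black entry is placed in $R_{s'+1}$. Both rows are outside $R^s$, which gives the substitution half of (a). Shifting moves only the lower parts $C^{>s'}\subset R^{>s}$, which gives the first half of (b). Hence $R^s$ of $\mathscr{R}^{j+1}$ coincides with $R^s$ of $\mathscr{R}^j$. The boundaries $B^{j+1},B'^{j+1}$ are defined by numerical values in rows $\le s$, so they are unchanged, and the trapezium is literally the same.

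\emph{Case $s'<s$.} This is the substantive case. Here we use Lemma~\ref{lem:rightblack}, Lemma~\ref{lem:structure}(ii)--(iv) and Proposition~\ref{prop:enabling} applied to $P_j$.
\begin{enumerate}
\item[(1)] The recoloured entry lies in a column of $S_j$ that is not the leftmost one. By Proposition~\ref{prop:enabling}, its left boundary column $C_j^-$ and the target column $C'''$ lie in $[C_j^-,C'_j]$. The right boundary $B'^j$ of our trapezium is black (Lemma~\ref{lem:rightblack}), and the entries of $C'$ in $R^s$ are never recoloured. Consequently the moved black entry stays weakly left of $B'^j$.
\item[(2)] If $C_j$ lies in $]C,C'[$, the new black entry lands in $R_{s'+1}\subset R^s$. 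It is one row below and one column to the left of its old position, and by strict row increase (Lemma~\ref{lem:structure}(i)) it cannot pass the box of $B^j$ carrying the same row value of $C$. Thus it remains in $]B^{j+1},B'^{j+1}]$, and the black entries change only in position, not in value.
\item[(3)] For shifting, the moved lower parts travel leftward in unison and stop at a column of height exactly $s'$, no later than $C_j^-$. We must show this stopping column is not left of $B^j$ in any row $\le s$. For this we plan to invoke Lemma~\ref{lem:structure}(iv): the value in $C_j^-$ is unique in $[C_j^-,C'_j]$. Combined with the strict row order, this forces the row-$t$ entry of $C$ to remain the rightmost occurrence of its value. We would also recompute $B^{j+1}$ as the new rightmost box, so the boundary moves together with the shift, keeping the shifted entries in $[B^{j+1},B'^{j+1}]$.
\end{enumerate}

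Part (c) then follows from the induction. At each step every black value of $\ell R^s_{C,C'}$ either stays put or moves within the trapezium, by (a) and (b). A recolouring leaves a red copy in place, and by Lemma~\ref{lem:structure}(ii)--(iii) there is exactly one black copy, with no two equal values in a row. Hence each value is present, possibly with red duplicates. Lemma~\ref{lem:nonacq} confirms that no extra red entries in $R_s$ are introduced.

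The main obstacle is step (3): showing that shifting for a lower pair cannot push entries across the left boundary $B^j$. This needs a careful comparison between $C_j^-$ and $B^j$, and it is the point where the Enabling Proposition and Lemma~\ref{lem:structure}(iv) must be combined. If this comparison proves too delicate, we would first treat the extreme-shifting case, where the endpoint is exactly $C_j^-$, and then argue that non-extreme shifts stop earlier, that is, to the right of it.
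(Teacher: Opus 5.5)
Your treatment of $s'>s$ and your derivation of (c) match the paper. The paper regards (a) and (b) as immediate from the definitions and deduces (c) from them together with Lemma~\ref{lem:nonacq}.

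The gap is the case $s'<s$. You rightly call it the substantive one, but you do not prove it: step~(3) is only a plan with a fallback, and the statement it aims at is false. In the paper's own example (composition $(1,2,3,3,1,2)$, $(C,C')=(C_2,C_6)$, $s=2$), implementing $(C_1,C_5)$ of height $s'=1$ shifts the row-$2$ value $3$ of $C$ from $C_2$ into $C_1$. That is strictly to the left of $b^j_2$. Lemma~\ref{lem:structure}(iv) only concerns uniqueness of values in $[C_j^-,C'_j]$, so it cannot rule this out.

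Step~(2) is also defective, in two ways.
\begin{itemize}
\item It omits the configurations with $C_j$ to the left of $C$. The hypothesis allows these, and they are exactly the ones in which the boundaries move.
\item Its conclusion fails when the recoloured entry lies to the right of $C'$. Take the composition $(2,1,2,1)$ with $(C,C')=(C_1,C_3)$ and $P_j=(C_2,C_4)$, so that $C_j\in\,]C,C'[$. Following the rule used in the paper's worked examples, the $6$ in $C_4$ is recoloured red, a black $6$ is placed in $R_2\cap C_3$, and the value $5$ of $C'$ is pushed into $C_2$. The new black entry therefore lies strictly to the right of $b'^{j+1}_2$, outside $]B^{j+1},B'^{j+1}]$.
\end{itemize}

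The missing observation is that no comparison between $C_j^-$ and $B^j$ is needed. Shifting moves the lower parts of a run of columns one step left in unison, and the skipped columns of height $<s'$ have empty lower parts. Hence in every row $R_t$ the left-to-right order of the entries already present is unchanged.

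Each value occurs at most once per row (Lemma~\ref{lem:structure}(iii)), and $B,B'$ are defined by the positions of the values of $C$ and $C'$. So the boundaries simply move together with the shifted entries, and the set of values in each $R_t\cap\,]B,B']$ is preserved. The only genuine changes are the in-place recolouring of some value $v$ in $R_{s'}$ and the new black copy of $v$ in $R_{s'+1}$.

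Apply strict row increase in $\mathscr R^{j+1}$ (Lemma~\ref{lem:structure}(i)), comparing $v$ with the values of $C$ and $C'$. The new black copy lies in $]B^{j+1},B'^{j+1}]$ exactly when $v$ exceeds the values of $C$ and is smaller than those of $C'$. Since values of $C'$ are never recoloured (Lemma~\ref{lem:rightblack}), this means exactly when $v$ is a value of $\ell R^s_{C,C'}$. Otherwise the new copy lies outside, and this holds wherever $C_j$ sits.

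With this observation, (b) for $s'<s$ is immediate, as the paper asserts, and your argument for (c) then goes through.
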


\begin{proof}
Parts (a) and (b) are immediate from the definitions.  Part (c) follows from
(a) and (b) together with Lemma~\ref{lem:nonacq}: no entry is removed from
the left trapezium by implementing a taller pair; only internal indices may
acquire a red duplicate in $\ell\mathcal{T}^s_{C,C'}(i)$ if the pair of
height $s' < s$ is implemented; but by (a) this does not remove the black
entry, only adds a red one to the right. Therefore all the entries of the rectangle
$\ell R^s_{C,C'}$ are black eateries in  $\mathcal{T}^s_{C,C'}(i)$  throughout all prior implementations.
\end{proof}

\subsection{Excluded roots in a reverse tableau}\label{ssec:excl}

\begin{definition}%[{\cite[4.3.5]{FJ6}}]
\label{def:excl}
A coordinate vector $x_{i,j}$ with $i < j$ is \emph{excluded} in a reverse
tableau $\mathscr{R}$  i.e.\ belongs to $X(\mathscr{R})$  if $i$
appears above $j$ within the same column, or in a column strictly to the
right of the column containing the (unique) black entry $j$.  Here $i$ is
taken to be the \emph{rightmost} occurrence of that numerical value in
$\mathscr{R}$.  By convention, roots in the Levi subalgebra are not counted
%\cite[4.1.4, Def.]{FJ5}
.
\end{definition}

Note that, since $x_{i,j}$ is excluded only when $i < j$, and since a
reverse tableau is standard with multiplicities (Lemma~\ref{lem:structure}(i)),
if $i$ appears to the right of $j$ then $i$ must lie in a row strictly above
$j$.  This gives:

%\begin{lemma}[{\cite[Lemma 4.3.5]{FJ6}}]\label{lem:horizontal}
%If the right-going line $\ell_{j,i}$ (from $j$ to $i$) is downward-going
%(in particular, horizontal), then $x_{i,j}$ is \emph{not} an excluded root.
%Equivalently, only up-going right-going lines can give excluded roots.
%\end{lemma}

\begin{lemma}\label{lem:stabexcl}
For all $i \in [1, \mathbf{g}]$ one has
$X(\mathscr{R}^i) \subset X(\mathscr{R}^{i+1})$.
\end{lemma}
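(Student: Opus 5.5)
We argue directly from Definition~\ref{def:excl}, comparing $\mathscr{R}^i$ with $\mathscr{R}^{i+1}$. The latter is obtained by implementing $P_i=(C,C')$ of height $s$, i.e.\ by one substitution followed by shifting of partial columns. Fix $x_{a,b}\in X(\mathscr{R}^i)$ with $a<b$. Then either (1) the rightmost occurrence of $a$ lies above $b$ in the same column, or (2) it lies in a column strictly to the right of the column of the unique black $b$. We must show the same relation persists in $\mathscr{R}^{i+1}$. The only moves that can disturb it are the following.
\begin{itemize}
\item[(A)] The black entry $j$ in $C''\cap R_s$ is recoloured red and a new black $j$ is placed in $C'''\cap R_{s+1}$, strictly to the left.
\item[(B)] The lower parts (rows below $R_s$) of the columns from $C'''$ leftward to the first column of height exactly $s$ are shifted one column left, skipping columns of height $<s$.
\end{itemize}

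\textbf{Step 1: move (A).} Rightmost occurrences never move under substitution: the red $j$ stays in place, and the new copy lies to its left. The only black entry that changes is the black $j$, which moves left, from $C''$ to $C'''$.
\begin{itemize}
\item If $b=j$, case (2) is preserved, since the column of black $b$ only moves left. In case (1), $a$ sits above $j$ in $C''$, and $C''$ is now strictly to the right of the new black $j$, so we pass into case (2).
\item If $a=j$, its rightmost occurrence is unchanged, so both cases are preserved.
\end{itemize}

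\textbf{Step 2: move (B).} Shifting preserves rows and moves only entries strictly below $R_s$. All moved entries shift by the same amount, and columns of height $<s$ are skipped; because of this we expect the relative column order of moved entries to be preserved. The delicate pairs are those where exactly one of $a$, $b$ is moved.
\begin{itemize}
\item If black $b$ is moved left and $a$ is not, then case (2) is preserved. In case (1), $a$ is above $b$ in a column whose lower part left; $a$ is then strictly right of black $b$, giving case (2).
\item If $a$ is moved and $b$ is not, this is the real danger: the rightmost copy of $a$ might land in or left of the column of black $b$.
\end{itemize}
To rule this out we use Lemma~\ref{lem:structure}(i) and (iii) together with $a<b$. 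Since $\mathscr{R}^i$ is standard with multiplicities, $a$ to the right of $b$ forces $a$ to lie in a strictly higher row than $b$. If $a$ is moved, it lies below $R_s$, so $b$ lies even lower. The shifting then runs over the contiguous block of columns ending at the first column of height exactly $s$. We expect this to force the lower part of the column of $b$ to be shifted as well, unless that column is to the left of the terminating column; in the latter case $a$ still lands to its right, because the terminating column has height exactly $s$ and cannot contain $b$ below $R_s$.

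\textbf{Main obstacle.} The main obstacle is this last case analysis: showing that shifting cannot carry the rightmost $a$ past black $b$. This includes the boundary situation where $a$ moves into the column previously occupied by a column of height $<s$ that was skipped. We would first check this against the Enabling Proposition~\ref{prop:enabling}, which guarantees the shift terminates at the leftmost column of $S_i$, no later than $C^-$. Combined with the row-monotonicity argument above, this should show the shift never crosses a column containing a lower entry that was not itself shifted.

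Excluded roots in Levi blocks are ignored by convention, and we should check that no root of $\mathfrak{m}$ becomes Levi; this is immediate since Levi membership depends only on values, not positions. The inclusion then follows.
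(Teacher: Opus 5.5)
Your proposal is correct. It agrees with the paper's proof on the substitution step and goes further on the shifting step, which the paper never addresses.

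\textbf{Comparison with the paper.} Your Step 1 is the paper's argument. The paper only follows $x_{k,j}$ for the substituted value $j$, which is your case $b=j$, with the same two subcases: $k$ above the old black $j$ ends up to the right of the new one, and $k$ to the right stays to the right. It adds only the remark that rightmost occurrences are undisturbed, which is your case $a=j$. The paper's proof stops there and says nothing about the shifting of partial columns. Your Step 2 therefore treats a case the paper leaves implicit, and this is the real gain of your version.

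\textbf{Your Step 2 is already complete.} Despite the hedging, no further idea is needed.
\begin{itemize}
\item In the dangerous case, your row argument puts black $b$ below $R_s$. So its column has total height $>s$ and lies strictly left of the column of $a$, hence weakly left of $C'''$.
\item The shift acts on exactly the columns of height $>s$ from $C'''$ leftward to a terminating column of height exactly $s$, which has no entries below $R_s$.
\item So either the column of $b$ is in this chain, or it lies strictly left of the terminating column.
\item In the first case $b$ moves too. Order is preserved, because each chain column hands its lower part to its predecessor in the chain; $a$ lands weakly right of $b$'s old column, and $b$ lands strictly left of it.
\item In the second case $a$ lands at worst in the terminating column, still strictly right of $b$.
\end{itemize}
The ``boundary situation'' you worry about cannot occur, since lower parts never land in skipped columns. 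The Enabling Proposition is not needed. The same argument covers extreme shifting, because columns of height $>s$ are never skipped.

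\textbf{Small points to tidy.}
\begin{itemize}
\item State (B) as acting on the old lower parts before the new black $j$ is inserted. As you wrote it, (B) would also move the new $j$.
\item For $b=j$, note that the rightmost $a$ lies either in the upper part of $C''$ or strictly to its right. The shift, which acts from $C'''$ leftward below $R_s$, never touches it.
\item It suffices to track the copy of $a$ that was rightmost in $\mathscr{R}^i$. The rightmost copy in $\mathscr{R}^{i+1}$ can only lie further right, and that only strengthens either exclusion condition.
\end{itemize}
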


\begin{proof}
Passing from $\mathscr{R}^i$ to $\mathscr{R}^{i+1}$ recolours a black entry
$j$ (in row $R_s$ of some column $C''$) to red, and introduces a new black
entry $j$ one row lower (in $R_{s+1}$) and in a column $C'''$ to the left of
$C''$.  For any numerical value $k \ne j$ (in either colour), if $x_{k,j}$
was excluded in $\mathscr{R}^i$ (i.e.\ the rightmost $k$ lay above or to the
right of the black $j$ in $\mathscr{R}^i$), we must check it remains excluded
in $\mathscr{R}^{i+1}$.

The new black $j$ in $\mathscr{R}^{i+1}$ is one row lower and strictly to the
left of the old black $j$.  If $k$ was above the old black $j$ (in the same
column $k<j$), it is now to the right of the new black $j$, so $x_{k,j}$ remains
excluded.  If $k$ was to the right of the old black $j$, it remains to the
right of the new black $j$ (which moved only leftward).  In both cases
$x_{k,j} \in X(\mathscr{R}^{i+1})$.

Moreover, the red entry $j_{red}$ now appears in $R_s$ of $C''$ is pushed leftward. 
For any $k$ above or to the right of $j_{\text{red}}$ and $k<j_{\text{red}}$, the roots $x_{k,j_{\text{red}}}$ are eluded 
$\mathscr{R}^{i}$ by the convention that the rightmost occurrence of $k$ are still excluded in $\mathscr{R}^{i+1}$.  Hence $X(\mathscr{R}^i) \subset X(\mathscr{R}^{i+1})$.
\end{proof}

%==========================================================================
\section{Vanishing of the Benlolo--Sanderson Invariants}\label{sec:vanish}
%==========================================================================
We prove that the subspace associated with a complete reverse tableau is contained in the nilfibre. 
The key step is to translate a combinatorial condition, the presence of a red entry in the relevant trapezium, into the vanishing of the corresponding Benlolo--Sanderson invariant.
\medskip
\subsection{Setup and main statement}

Fix a complete sequence $\mathcal{P} = \{P_i = (C_i, C'_i)\}_{i=1}^{\mathbf{g}}$
of neighbouring pairs of heights $s_i$.  Define the sequence of reverse
tableaux $\mathscr{R}^1 = \mathscr{T}$, and inductively let $\mathscr{R}^{i+1}$
be any reverse tableau obtained from $\mathscr{R}^i$ by implementing $P_i$
via the construction of Section~\ref{ssec:RTconstruction}.  Set
$\mathfrak{u}_i := \mathrm{Span}\{x_{i,j} : x_{i,j} \notin X(\mathscr{R}^i)\}$
and write $\mathfrak{u} := \mathfrak{u}_{\mathbf{g}+1}$.\\

\bigskip 
To lighten the notation, we shall write
$I^{s_i}_{C_i,C'_i}\big|_{\mathfrak u}$
for the restriction of the polynomial $I^{s_i}_{C_i,C'_i}$ to the variety 
$\overline{B\cdot \mathfrak u}$.  In other words,
\[
I^{s_i}_{C_i,C'_i}\big|_{\mathfrak u}
:=
I^{s_i}_{C_i,C'_i}\big|_{\overline{B\cdot \mathfrak u}}.
\]
This is the notation we  adopt throughout the text. \medskip 
\begin{theorem}[ {\cite[Thm.\,5.1]{FJ6}}]\label{thm:vanish}
All BS invariants vanish on $\mathfrak{u}$:
$I^{s_i}_{C_i, C'_i}\big|_{\mathfrak{u}} = 0$ for all $i \in [1,\mathbf{g}]$.
Consequently they all vanish on $\overline{B \cdot \mathfrak{u}}$.
\end{theorem}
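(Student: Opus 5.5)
We reduce the theorem to a single fixed pair $P_i=(C,C')$ of height $s:=s_i$ and show $I^{s}_{C,C'}$ vanishes on $\mathfrak{u}$. The $B$-statement then follows formally. $I^s_{C,C'}$ is a $P'$-semi-invariant, so on $B\cdot\mathfrak u$ it is a character times its value on $\mathfrak u$. Hence vanishing on $\mathfrak u$ gives vanishing on $B\cdot\mathfrak u$, and by continuity on the closure. By Lemma~\ref{lem:stabexcl} we have $\mathfrak u\subset\mathfrak u_{i+1}$. It therefore suffices to prove the vanishing on $\mathfrak u_{i+1}$, i.e.\ at the stage just after $P_i$ has been implemented. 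All later implementations only enlarge the excluded set.

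The core step is combinatorial. By the description preceding Lemma~\ref{lem:degree}, every monomial of $I^s_{C,C'}$ corresponds to a system of pairwise disjoint composite right-going lines. These lines start at the $s$ entries of $C$ and end at the $s$ entries of $C'$, and pass once through every entry of $\ell R^s_{C,C'}$. Each monomial is a product of the $x_{a,b}$ along the line segments, so it suffices to show that every such system contains at least one segment $x_{a,b}\in X(\mathscr R^{i+1})$. To set this up, transport the system into the trapezium $\mathcal T^s_{C,C'}(i+1)$. By Lemma~\ref{lem:nointerf}(c) all entries of $\ell R^s_{C,C'}$ occur (black) in $\ell\mathcal T^s_{C,C'}(i+1)$. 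By Lemma~\ref{lem:rightblack} the right boundary is a black line carrying the values of $C'$ row by row. The left boundary carries the values of $C$.

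Now use the red entry created by implementing $P_i$. By the Enabling Proposition~\ref{prop:enabling} and Step~1 of the construction, an entry $j\in R_s$ of some non-leftmost column $C''\in S_i$ becomes red, and its black copy moves to $R_{s+1}$ of a column $C'''$ further left. In the reversed trapezium, the $s$ boundary values of $C$ together with the black entries of $\ell R^s_{C,C'}$ must be matched into $s$ strictly right-going chains ending at $C'$. The trapezium rows $R^s$ now contain one fewer black slot, because $j$ has dropped below $R_s$. The plan is to show that the chain passing through $j$ is then forced to use a segment $x_{a,j}$ with $a$ above or to the right of the black $j$, or a segment $x_{j,b}$ where $b$ lies in a column weakly left of the new black $j$ but with $j<b$. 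In either case Definition~\ref{def:excl} makes that segment excluded. Lemma~\ref{lem:nonacq} is used here to guarantee that, before step $i$, no other red entry in $R_s$ of the left trapezium exists. The new red entry is thus the one that breaks the pigeonhole.

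I expect this counting/pigeonhole step to be the main obstacle. It requires checking that shifting of partial columns (Step~2) and earlier implementations of lower pairs (Lemma~\ref{lem:nointerf}(a),(b)) do not create an alternative non-excluded path around $j$. I would first try a row-by-row argument: in $\mathscr R^{i+1}$, the entries of $\ell\mathcal T^s_{C,C'}(i+1)$ that can be reached from $C$ by non-excluded right-going segments number at most $d-1$, where $d=|\ell R^s_{C,C'}|$ (this is the black-count degree drop). Since each monomial needs all $d$ entries, every monomial contains an excluded variable and so vanishes identically on $\mathfrak u_{i+1}$.
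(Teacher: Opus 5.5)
\textbf{There is a genuine gap: the central step is announced but not proved, and the mechanism you sketch for it would fail.}

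Your outer reduction matches the paper's. Semi-invariance carries vanishing from $\mathfrak u$ to $\overline{B\cdot\mathfrak u}$, and Lemma~\ref{lem:stabexcl} reduces everything to $\mathfrak u_{i+1}$. The missing step is the core implication: the red entry created in $R_s\cap\ell\mathcal T^s_{C,C'}(i+1)$ by implementing $P_i$ forces $I^s_{C,C'}|_{\mathfrak u_{i+1}}=0$. You only announce a pigeonhole argument for this. The paper takes this step from Proposition~\ref{prop:nonvan}, in the direction ``red entry in $R_s\cap\ell\mathcal T^s_{C,C'}(j)$ $\Rightarrow$ vanishing on $\mathfrak u_j$''. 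That proposition is where the black count of Lemma~\ref{lem:blackcount} becomes an algebraic statement. Your sentence ``at most $d-1$ entries are reachable \dots\ this is the black-count degree drop'' is exactly that conversion, asserted rather than proved. Counting black boxes of the trapezium says nothing by itself about which segments $x_{a,b}$ are excluded.

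Moreover, the plan would fail as formulated. Two concrete reasons are visible in the paper's example $(1,2,2,1)$.

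First, your monomial model is wrong when an intermediate column has height $>s$. The lines may pass through entries of such a column lying below $R_s$ and skip entries of $\ell R^s_{C,C'}$. For instance, $I^1_{C_1,C_4}$ contains $x_{12}x_{25}x_{56}$ and $x_{13}x_{34}x_{46}$. These miss $4$, resp.\ $2$, of $\ell R^1_{C_1,C_4}=\{2,4,6\}$ and pass through $5$, resp.\ $3$, which lie outside it. So ``each monomial needs all $d$ entries'' is false, and a bound on reachable entries of $\ell R^s_{C,C'}$ proves nothing.

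Second, the excluded variable that kills a monomial need not be incident to $j$. Take the order $((C_2,C_3),(C_1,C_4))$ with red entry $4$. The monomial $x_{13}x_{34}x_{46}$ passes through the new black $4$ (in $C_2$) via the non-excluded segments $x_{34}$ and $x_{46}$. These are non-excluded because Step~2 has moved $3$ into $C_1$, to the left of the black $4$. The monomial vanishes only because the same shift puts $3$ below $1$, which makes $x_{13}$ excluded. So shifting is not merely a possible source of bypasses to be ruled out. It is itself one of the mechanisms producing the needed excluded roots, and a direct proof would have to track it globally. That is what Proposition~\ref{prop:nonvan} packages.
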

This theorem proves that every complete reverse tableau produces a subspace whose $B$-orbit closure is contained in the nilfibre.  It is the first half of the component argument; reverse tableaux do not merely give combinatorial data; they give actual geometric subvarieties of $\mathscr N$.
The proof of theorem \ref{thm:vanish} relies on a precise count of black entries in the left trapezium
(Lemma~\ref{lem:blackcount}) and the identification of this count with
non-vanishing of the restriction of the invariant
(Proposition~\ref{prop:nonvan}).

\subsection{Black count in the left trapezium}

\begin{lemma}[ {\cite[Prop.\,5.1.2]{FJ6}}]\label{lem:blackcount}
Let $d=d(\ell R^s_{C,C'})$ be the degree of $I^s_{C,C'}$. Then, for every
$j\leq i$,
\[
N_{\mathscr{R}^j}\bigl(R^s\cap ]B^j,B'^j]\bigr)=d,
\]
where $N_{\mathscr{R}^j}(\,\cdot\,)$ denotes the number of black boxes in the
specified region of the reverse tableau $\mathscr{R}^j$. Furthermore, the black entries lying in
$R^s\cap ]B^j,B'^j]$ have precisely same the numerical values as in
$\ell R^s_{C,C'}$.

Consequently, $R_s\cap \ell\mathcal{T}^s_{C,C'}(j)$ contains no red entry
exactly when the left trapezium contains $d$ black boxes.
\end{lemma}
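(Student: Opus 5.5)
I would argue by induction on $j\le i$, with the invariant being the stronger statement: \emph{the black entries in $R^s\cap\,]B^j,B'^j]$ are exactly the numerical values of $\ell R^s_{C,C'}$, each occurring once}. For $j=1$ we have $\mathscr{R}^1=\mathscr{T}$ and $\mathcal{T}^s_{C,C'}(1)=R^s_{C,C'}$, so all entries are black. The left trapezium is then literally $\ell R^s_{C,C'}$, and its cardinality is $d$ by Lemma~\ref{lem:degree}.

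For the inductive step, pass from $\mathscr{R}^j$ to $\mathscr{R}^{j+1}$ by implementing $P_j=(C_j,C'_j)$ of height $s'$, where $j<i$, so that $P_j\neq(C,C')$. I would split into cases.

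\textbf{Case 1: $]C_j,C'_j]$ does not meet the region between the boundaries.} Then nothing in the trapezium moves. The boundaries $B^{j+1},B'^{j+1}$ are defined by numerical values, so they coincide with $B^j,B'^j$.

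\textbf{Case 2: $s'>s$.} By Lemma~\ref{lem:nointerf}(a),(b), substitution and shifting occur in rows $\ge s'$, hence outside $R^s$. The black content of $R^s$ is unchanged, and so are the boundaries.

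\textbf{Case 3: $s'=s$.} By Lemma~\ref{lem:nooverlap}, the trapezium of $P_j$ meets ours at most along a common boundary. By Lemma~\ref{lem:nonacq} and Proposition~\ref{prop:enabling}, any recolouring in $R_s$ happens outside $]B^j,B'^j]$, and the black copy lands in $R_{s+1}\notin R^s$. The count is therefore preserved. Note that if the recoloured entry lies on $B^j$, the left boundary moves to the leftmost copy, but this only changes which boxes form the boundary, not the black content of the open interval.

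\textbf{Case 4: $s'<s$, the main case.} Here the implementation recolours a black $m\in R_{s'}$ red and places a black $m$ in $R_{s'+1}\subset R^s$, one column to the left. I must show two things:
\begin{enumerate}
\item[(a)] The new black $m$ still lies in $]B^{j+1},B'^{j+1}]$. If $m$ was a value of $\ell R^s_{C,C'}$, this follows from row-monotonicity (Lemma~\ref{lem:structure}(i)): $m$ exceeds the value of $C$ in row $s'+1$ and is less than or equal to the value of $C'$ there. If instead $m$ was a value of $C$ (on $B^j$), it is by definition not in $]B^j,B'^j]$, and the left boundary box in that row is redefined as the rightmost copy. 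Its black copy moves left, so it stays outside the open interval.
\item[(b)] The shifting of lower parts stays inside the trapezium rows and does not push a black entry across $B'^j$. This holds because $B'^j$ is entirely black and row-aligned (Lemma~\ref{lem:rightblack}), and by Lemma~\ref{lem:structure}(ii),(iii) each value has a unique black occurrence and at most one occurrence per row.
\end{enumerate}
Combined with Lemma~\ref{lem:nointerf}(c), this gives that the set of black values in the left trapezium is unchanged. Hence the count remains $d$.

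The \emph{main obstacle} is Case 4(b): showing that shifting of partial columns never moves an entry belonging to $\ell R^s_{C,C'}$ leftward past $B^{j}$. I would first try to prove that shifting stops at a column of height exactly $s'$, and that such a column exists weakly to the right of $B^j$ in the relevant rows. This would follow from Proposition~\ref{prop:enabling} applied to $P_j$, combined with the fact that $C$ itself has height $s>s'$.

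For the final "consequently" clause, I would argue as follows. There are $d$ black entries, each value occurring once in a row. Any red entry in $R_s\cap\ell\mathcal{T}$ would force the corresponding black copy into $R_{s+1}$, outside $R^s$, and so would reduce the black count below $d$. Conversely, if the count is $d$ then no value of $\ell R^s_{C,C'}$ has had its black copy expelled from $R_s$, so there is no red entry in $R_s\cap\ell\mathcal{T}$. Lemma~\ref{lem:nonacq} ensures that such a red entry could only arise from implementing $(C,C')$ itself, and this is the case in which the count drops.
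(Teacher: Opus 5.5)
Your proposal is correct and follows the paper's proof. Both argue by induction on $j$ from $\mathscr{R}^1=\mathscr{T}$ (Lemma~\ref{lem:degree}): pairs of height $>s$ leave $R^s$ untouched by Lemma~\ref{lem:nointerf}, and in the main case $s'<s$ a black entry leaves $R_{s'}$ and reappears in $R_{s'+1}\subset R^s$ inside the left trapezium, so the count and the set of black values are preserved. Your separate treatment of $s'=s$, of a recoloured entry of $C$, and of the closing ``consequently'' clause is more careful than what the paper writes out.

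The step you flag as the main obstacle is not a real one. It is already asserted by Lemma~\ref{lem:nointerf}(b). It also follows from the row-monotonicity argument you use in 4(a): shifting preserves rows, and every value of $\ell R^s_{C,C'}$ exceeds every value of $C$. So you do not need your suggested route via a height-$s'$ column weakly to the right of $B^j$. That route in fact fails when $P_j$ straddles $C$: in the paper's example, implementing $(C_1,C_5)$ shifts the lower part of $C=C_2$ into $C_1$, so shifting stops to the left of $B^j$.
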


\begin{proof}
For $j = 1$ the trapezium $\mathcal{T}^s_{C,C'}(1) = R^s_{C,C'}$ is the
rectangle, which has all entries black, so $N_{\mathscr{R}^1} = d$ by
Lemma~\ref{lem:degree}.

Assume it is correct for $j \in [1, i-1]$.  When implementing $P_j = (C_j, C'_j)$
of height $s_j$:

\textit{Case $s_j \ge s$}: By Lemma~\ref{lem:nointerf}, the entries of the
trapezium are unaffected, so number of black entree is preserved.

\textit{Case $s_j < s$}: By the Enabling Proposition applied at step $j$, the
lowest black entry $k$ of some column $\tilde{C}$ in $S_j$ lying in
$R_{s_j}$ is recoloured red, and a new black entry of value $k$ is placed in
$R_{s_j+1}$ of the column $\tilde{C}'''$ to the left of $\tilde{C}$.  By
Lemma~\ref{lem:nointerf}(c), this new black entry lies in
$\ell\mathcal{T}^s_{C,C'}(j+1)$.  Thus
\begin{align*}
  N_{\mathscr{R}^{j+1}}\bigl(R_{s_j} \cap\, ]B^{j+1},B'^{j+1}]\bigr)
    &= N_{\mathscr{R}^j}\bigl(R_{s_j} \cap\, ]B^j,B'^j]\bigr) - 1, \\
  N_{\mathscr{R}^{j+1}}\bigl(R_{s_j+1} \cap\, ]B^{j+1},B'^{j+1}]\bigr)
    &= N_{\mathscr{R}^j}\bigl(R_{s_j+1} \cap\, ]B^j,B'^j]\bigr) + 1.
\end{align*}
Summing over all rows $R_t$ for $t \in [1,s]$ (using that rows $R_t$ for
$t > s$ are unaffected) gives $N_{\mathscr{R}^{j+1}} = N_{\mathscr{R}^j} = d$.  In addition, the recolored  numerical value $k$  entry appears in the left trapezium as a new black entry (in row
$R_{s_j+1}$), replacing the now-red entry (in row $R_{s_j}$) within the same
left trapezium the number of black entrees stays the same in $\ell\mathcal{T}^s_{C,C'}(j+1)$.  %By induction, the complete set of numerical black entrees  remains 
%$\ell R^s_{C,C'}$  unchage 
Thus the number of black entree in the left trapezium is unchanged from
$\ell\mathcal{T}^s_{C,C'}(j)$ to $\ell\mathcal{T}^s_{C,C'}(j+1)$. It follows
by induction that the set of numerical black entries is preserved throughout
the construction and remains precisely $d(\ell R^s_{C,C'})$.
\end{proof}

\subsection{Non-vanishing% and the role of horizontal lines
}

The following proposition converts the black count into an algebraic
non-vanishing statement of invariant .

\begin{proposition}[ {\cite[Prop.\,5.2]{FJ6}}]\label{prop:nonvan}
For all $j \le i+1$:
\[
  \text{there are no red entries in } R_s \cap \ell\mathcal{T}^s_{C,C'}(j)
  \quad\Longleftrightarrow\quad
  I^s_{C,C'}\big|_{\mathfrak{u}_j} \ne 0.
\]
\end{proposition}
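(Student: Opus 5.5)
We prove the two implications separately, working throughout with the combinatorial description of $I^s_{C,C'}$ as a sum over systems of pairwise disjoint composite right-going lines (the monomials of \cite[Prop.\,5.1.1(i)]{FJ5}). The plan is to transport these line systems from the rectangle $R^s_{C,C'}$ to the trapezium $\mathcal{T}^s_{C,C'}(j)$. By Lemma~\ref{lem:blackcount}, the black entries of $\ell\mathcal{T}^s_{C,C'}(j)$ carry exactly the numerical values of $\ell R^s_{C,C'}$. Each monomial $\prod x_{a,b}$ of $I^s_{C,C'}$ thus corresponds to a system of lines joining boxes of the trapezium. A monomial survives restriction to $\mathfrak{u}_j$ iff none of its factors $x_{a,b}$ lies in $X(\mathscr{R}^j)$. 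By Definition~\ref{def:excl}, this holds iff, for each factor, the rightmost occurrence of $a$ lies strictly to the left of the black $b$ and not above it in the same column. Since $\mathfrak{u}_j$ is spanned by coordinate vectors and $I^s_{C,C'}$ is multilinear, distinct surviving monomials cannot cancel. Hence $I^s_{C,C'}|_{\mathfrak{u}_j}\neq 0$ iff at least one line system has all its edges non-excluded.

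For ($\Rightarrow$), assume $R_s\cap\ell\mathcal{T}^s_{C,C'}(j)$ has no red entry. We construct an explicit surviving system. We use the ``horizontal'' system: row by row, the line in $R_t$ runs from $b^j_t$ through the black boxes of $R_t\cap\,]B^j,B'^j]$ to $b'^j_t$. By Lemma~\ref{lem:structure}(i), entries increase strictly along rows, so each step is right-going. We must check that the rightmost occurrence of each left endpoint is not to the right of the black right endpoint. If a red duplicate of $a$ sat to the right, it would, by the reverse-string structure (Lemma~\ref{lem:structure}(ii)), lie one row higher. So this can only fail at a row transition created by a substitution of height $s'<s$. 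Such a substitution moves black values down one row, so the horizontal system must be allowed to step down. We therefore let the line drop from $R_{s'}$ to $R_{s'+1}$ at the new black entry, keeping right-goingness. The hypothesis of no red entries in $R_s$ guarantees that every line can still terminate in $B'^j$, which is black by Lemma~\ref{lem:rightblack}, with the count $d$ matching by Lemma~\ref{lem:blackcount}. This yields a nonzero monomial.

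For ($\Leftarrow$), suppose a red entry $k$ appears in $R_s\cap\ell\mathcal{T}^s_{C,C'}(j)$. By Lemma~\ref{lem:nonacq} it was produced by implementing $(C,C')$ itself, so the black $k$ sits in $R_{s+1}$, outside $R^s$. Every line system must cover the box value $k$ with a line through $R^s$. Its incoming edge $x_{a,k}$ or outgoing edge $x_{k,b}$ must then involve the black $k$, which lies below row $s$, while the rightmost $k$ is the red one in $R_s$. We then show that every such edge is excluded, by comparing positions via Lemma~\ref{lem:structure}(i),(iii). The main obstacle is this step: one must run a counting argument showing that the $s$ lines from $B^j$ to $B'^j$ cannot avoid $k$ through the $d$ black boxes that remain in $R^s$. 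I would try a pigeonhole argument first: only $d-1$ usable black boxes remain in $R^s\cap\,]B^j,B'^j]$, so some line is forced to use an excluded coordinate. Hence every monomial dies and $I^s_{C,C'}|_{\mathfrak{u}_j}=0$.
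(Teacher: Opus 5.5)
\textbf{There is a genuine gap, in the direction ($\Leftarrow$).} Your reduction is sound: restricting to the coordinate subspace $\mathfrak u_j$ only deletes monomials, so $I^s_{C,C'}|_{\mathfrak u_j}\neq0$ iff some monomial has no excluded factor. But ($\Leftarrow$) is exactly the direction used in the proof of Theorem~\ref{thm:vanish} (a red entry at stage $i+1$ forces vanishing). You leave it as a plan, and the mechanism you propose for it is false.

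You claim that in every line system the edge at the red value $k$ (some $x_{a,k}$ or $x_{k,b}$) is excluded. The paper's own example in \S\ref{sec:example} refutes this. For $(1,2,2,1)$, implementing $(C_1,C_4)$ first makes $6$ red in $R_1\cap C_4$, puts the black $6$ in $R_2\cap C_3$, and shifts $3,5$ under $1,2$. Then $x_{56}$ is \emph{not} excluded, since $5$ lies in $C_2$, strictly left of the black $6$. The monomials $x_{12}x_{25}x_{56}$ and $x_{13}x_{35}x_{56}$ of $I^1_{C_1,C_4}$ die only through $x_{25}$ and $x_{13}$. These are exclusions created by the shifting of partial columns, away from $k$. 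For the same reason your pigeonhole (``only $d-1$ usable black boxes remain, so some line uses an excluded coordinate'') is not yet an argument. Passing through a value whose black copy has left the trapezium does not by itself produce an excluded variable.

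The same example also undermines a premise you borrow: that monomials are line systems confined to $R^s_{C,C'}$, covering each box once. This fails as soon as $]C,C'[$ contains columns of height $>s$. The monomials of $I^1_{C_1,C_4}$ run through $3,5\in R_2$, and in general they can bypass the $R_s$-entries of such columns. So ``every line system must cover $k$'' also needs proof when $k$ sits in an intermediate column (the situation of \S\ref{grm}).

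What is missing is a statement that turns the loss of one black box in $R^s\cap\,]B^j,B'^j]$ into an excluded factor in \emph{every} monomial, including shift-induced exclusions. One workable form of your pigeonhole is a Frobenius--K\"onig argument:
\begin{itemize}
\item realise $I^s_{C,C'}$ as a determinant of a block matrix;
\item show that after substitution and shifting the restricted matrix has a $p\times q$ zero block with $p+q$ exceeding its size.
\end{itemize}
In the example, $I^1_{C_1,C_4}$ is the $1\times1$ product of the blocks of $X$ in positions $(1,2),(2,3),(3,4)$. It is therefore $\pm$ the determinant of the $5\times5$ block-bidiagonal matrix built from these blocks and $-\mathrm{Id}$'s. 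There, $x_{13}=x_{25}=x_{46}=0$, together with the structural zeros, give a $3\times3$ zero block.

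The paper itself gives no proof of this proposition. It records the black count (Lemma~\ref{lem:blackcount}) and defers its identification with (non)vanishing to \cite{FJ6}. Your ($\Rightarrow$) ``horizontal system'' is in that spirit, but it too is only asserted. You must check that the rerouted system (the drops at substitutions of height $s'<s$) is an actual monomial of $I^s_{C,C'}$, with disjoint lines and consecutive values in distinct columns of $\mathscr T$, and that none of its edges is excluded.
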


\begin{proof}[Proof of Theorem~\ref{thm:vanish}]
After implementing $P_i = (C,C')$ at stage $i$, a red entry appears in
$R_s \cap \ell\mathcal{T}^s_{C,C'}(i+1)$.  By
Proposition~\ref{prop:nonvan} (the $\Leftarrow$ direction),
$I^s_{C,C'}|_{\mathfrak{u}_{i+1}} = 0$.  By Lemma~\ref{lem:stabexcl},
$\mathfrak{u} = \mathfrak{u}_{\mathbf{g}+1} \subset \mathfrak{u}_{i+1}$,
hence $I^s_{C,C'}|_{\mathfrak{u}} = 0$.  This applies to each $i$, so all
$\mathbf{g}$ generators of $\mathscr{I}_+$ vanish on $\mathfrak{u}$, and
hence on $\overline{B \cdot \mathfrak{u}}$ by $B$-equivariance.
\end{proof}

\section{Strategy for proving Surjectivity}

In~\cite[\S3]{FJ5} we introduced and constructed a natural
\emph{component map} to $\Irr(\mathscr N)$ (see~\cite[\S6.4]{FJ5}). This map is proved to be
injective in~\cite[\S7, Proposition~7.5]{FJ5}.
Moreover,~\cite[\S1.7]{FJ5} outlines a strategy for surjectivity based on a new family of
tableaux, called \emph{reverse tableaux}.
In this section we implement that strategy and show that every irreducible component of
$\mathscr N$ is obtained from a reverse tableau. Equivalently, we prove that the map
\[
\phi:\{\text{reverse tableaux}\}\longrightarrow \Irr(\mathscr N)
\]
is surjective.

\subsection{Strategy for proving surjectivity of $\phi$}
Following~\cite[\S1.7]{FJ5}, reverse tableaux are designed to encode a successive factorisation
of the defining invariants of $\mathscr N$ and to translate it into a controlled description of
the minimal primes of the defining ideal (see, e.g., \cite{AM} or \cite{Har}). This viewpoint, already illustrated by the
factorisation phenomena in~\cite[\S8]{FJ5}, provides the framework for our proof of surjectivity.

\

\paragraph{Generators and an order.}
Let $\mathscr I=\mathbb C[\mathfrak m]^{P'}$ and $\mathscr I_+$ be its augmentation ideal, so
$\mathscr N=\mathscr N(\mathscr I_+)$. Fix a generating set
\[
\mathscr I_+ = <I^{s_1}_{C_1,C_1'},\dots,I^{s_g}_{C_g,C_g'}>_+,
\]
where each $I^{s_t}_{C_t,C_t'}$ is one of the Benlolo--Sanderson invariants  attached to a neighbouring pair of columns $(C_t,C_t')$ of height $s_t$.

\paragraph{Successive factorisation statement.}
Let  $\mathscr R$  a reverse tableaux where the neighbouring column pairs $\mathcal P=\bigl((C_1,C_1'),\dots,(C_g,C_g')\bigr)$,  are implemented in certain  order 

A reverse tableau depends on two layers of choices. First, one chooses an order
$\mathcal P$ in which the neighbouring column pairs are implemented. Second, for this chosen order, one
chooses a sequence of red entries $(a_1,\dots,a_g)$ following the algorithm in \cite{FJ6} . These data determine a reverse tableau, denoted $\mathscr R_{(a_1,\dots,a_g)}$.
Conversely, every reverse tableau arises from some choice of an order $\mathcal P$ together
with a compatible sequence of red entries.

\begin{corollary}\label{codim}
Let $P\subset \mathrm{SL}_n$ be a parabolic subgroup, and let
$(c_1,c_2,\dots,c_n)$ 
for the sequence of Levi block sizes determined by \(P\).
Let $\mathscr T$ be the tableau associated with $P$, consisting of $n$ columns
$C_1,\dots,C_n$, where $\mathrm{ht}(C_j)=c_j$.

Fix $j\in\{1,\dots,n\}$ and an integer $k\le c_j$. Denote by $C_j^{(k)}$ the subcolumn of $C_j$
formed by its last $k$ boxes.
Given indices $i<j$ with $c_i\geq c_j- k$, move the subcolumn $C_j^{(k)}$ below $C_i$; this operation
defines a subspace $\mathfrak u\subset \mathfrak m$ (in the usual sense see \cite[4.1.3--4.1.4]{FJ5}). Then the codimension of $\overline{B\cdot \mathfrak u}$ in $\mathfrak m$ is
\[
\codim_{\mathfrak m}\,\overline{B\cdot \mathfrak u}
=\mathrm{ht}(C_j^{(k)})\cdot \bigl[\mathrm{ht}(C_i)\;-\;\bigl(\mathrm{ht}(C_j)-\mathrm{ht}(C_j^{(k)})\bigr)\bigr].
\]

Now let $\mathfrak u$ and $\mathfrak u'$ be two subspaces of $\mathfrak m$ obtained by moving two
distinct subcolumns $C_j^{(k)}$ and $C_{j'}^{(k')}$, and write
\[
d=\codim_{\mathfrak m}\,\overline{B\cdot \mathfrak u},
\qquad
d'=\codim_{\mathfrak m}\,\overline{B\cdot \mathfrak u'}.
\]
Assuming the two moves are independent (i.e.\ the corresponding sets of moved boxes to a lower row  are disjoint),
the codimension is additive:
\[
\codim_{\mathfrak m}\,\overline{B\cdot(\mathfrak u\cap \mathfrak u')} \leq d+d'.
\]

\end{corollary}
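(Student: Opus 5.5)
I would prove the codimension formula by a dimension count for the $B$-saturation $\overline{B\cdot\mathfrak u}$, using the standard principle of \cite[4.1.3--4.1.4]{FJ5}:
\[
\codim_{\mathfrak m}\overline{B\cdot\mathfrak u}=|X|-\dim\bigl(B\cdot\mathfrak u\text{-directions not already in }\mathfrak u\bigr),
\]
where $X$ is the set of excluded roots of the modified tableau. Concretely, $\dim\overline{B\cdot\mathfrak u}=\dim\mathfrak u+\dim\mathfrak b-\dim\mathfrak{b}_{\mathfrak u}$, with $\mathfrak b_{\mathfrak u}=\{y\in\mathfrak b:[y,\mathfrak u]\subset\mathfrak u\}$ the stabiliser of $\mathfrak u$. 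Since $\mathfrak u$ is spanned by root vectors, both terms are computed root by root.

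\textbf{Step 1 (excluded roots).} Moving $C_j^{(k)}$ below $C_i$ places the $k$ moved entries $m$ in rows $R_{c_j-k+1},\dots,R_{c_j}$ of a column sitting in position $i$. By Definition~\ref{def:excl}, $x_{a,m}$ becomes excluded exactly when $a$ lies strictly to the right of the new position of $m$, or above it in the same column, and $a<m$ with $x_{a,m}\in\mathfrak m$. These are the entries $a$ of $C_i$ in rows $\le c_i$ lying above $m$ once the entries of $C_{i+1},\dots,C_{j-1}$ are removed by the right-going convention. Counting gives, for each moved entry, $c_i-(c_j-k)$ excluded roots when $c_i\ge c_j-k$. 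The product formula $k\,[c_i-(c_j-k)]$ is then the raw count $|X|$.

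\textbf{Step 2 (no compensation from $B$).} I must show the $B$-saturation recovers none of these dimensions, i.e.\ $\dim\mathfrak b-\dim\mathfrak b_{\mathfrak u}=0$ modulo contributions that are already in $\mathfrak u$. Equivalently, every root vector $y\in\mathfrak b$ with $[y,\mathfrak u]\not\subset\mathfrak u$ must produce only directions already spanned by $\mathfrak u+[\mathfrak b_{\mathfrak u},\cdot]$. The route I would take is to use the ordering of Lemma~\ref{lem:structure}(i): the moved entries form a column-consistent configuration, so the excluded set is an ideal-like set under the relevant root ordering, namely closed under adding positive roots on the left and subtracting on the right. Then $\mathfrak u$ is stable under the Borel subalgebra of the Levi stabiliser, and the saturation map $B\times^{B_{\mathfrak u}}\mathfrak u\to\mathfrak m$ is generically finite. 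Hence $\dim\overline{B\cdot\mathfrak u}=\dim\mathfrak u+\dim B/B_{\mathfrak u}$. This is the main obstacle. Generic finiteness must be checked, and I would try to verify it by exhibiting a point of $\mathfrak u$ whose stabiliser in $B$ is $B_{\mathfrak u}$ up to finite index, using the generic-element computations of \cite{FJ5}. Failing that, I would instead compare with the BS invariant: since $\mathfrak u$ has codimension given by $\deg$-type data, I would bound the codimension from below by Krull-type arguments applied to vanishing invariants, and from above by $|X|$.

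\textbf{Step 3 (additivity).} For the second assertion, $\mathfrak u\cap\mathfrak u'$ is spanned by root vectors outside $X\cup X'$, so $\codim_{\mathfrak m}(\mathfrak u\cap\mathfrak u')\le|X|+|X'|$. Since $B\cdot(\mathfrak u\cap\mathfrak u')$ only enlarges the set, the closure has codimension at most that of $\mathfrak u\cap\mathfrak u'$ in the corrected sense from Step 2. Independence of the moves means the two corrections of Step 2 do not interact, because disjoint moved boxes give disjoint stabiliser defects. So the bound $d+d'$ follows from subadditivity. Only the inequality is claimed, so no equality argument is needed here.
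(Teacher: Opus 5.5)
\textbf{There is a genuine gap: Steps~1 and~2 are both false, and Step~3 rests on a claim that fails.}

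The paper gives no argument of its own here; it simply invokes \cite[\S6.3, Proposition~6.3(iii)]{FJ5}. Your route is a reasonable alternative in principle: $\codim$ equals $|X|$ minus the defect $\dim\mathfrak b-\dim\mathfrak b_{\mathfrak u}$, given generic finiteness. But the formula ends up asserted rather than derived.

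\textbf{Step 1 (placement and count).} Moving $C_j^{(k)}$ below $C_i$ puts its boxes in rows $R_{c_i+1},\dots,R_{c_i+k}$, not in their old rows. Each box descends $c_i-(c_j-k)$ rows, and that is where the factor comes from. By Definition~\ref{def:excl}, a moved entry $m$ excludes $x_{a,m}$ for all $c_i$ entries $a$ of $C_i$. It also excludes $x_{a,m}$ for every entry $a$ of $C_{i+1},\dots,C_{j-1}$; nothing in the definition discards these. So $|X|=k(c_i+c_{i+1}+\cdots+c_{j-1})$.

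\textbf{Step 2 (no compensation).} $B$ does compensate. Put $Q:=C_{i+1}\cup\cdots\cup C_{j-1}\cup(C_j\setminus C_j^{(k)})$. Every $e_{b,m}$ with $b\in Q$, $m\in C_j^{(k)}$ acts by $x_{a,m}\mapsto x_{a,m}-t\,x_{a,b}$ and does not normalise $\mathfrak u$. Step~2 is also internally inconsistent: $\mathfrak b_{\mathfrak u}=\mathfrak b$ would force $\overline{B\cdot\mathfrak u}=\mathfrak u$, of codimension $|X|$.

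The paper's own example shows both failures. For $(1,2,2,1)$, moving $5$ below $C_2$ gives $X=\{x_{25},x_{35}\}$. Yet $\exp(te_{45})$ sweeps $(x_{25},x_{35})$ along $(x_{24},x_{34})$, and $\overline{B\cdot\mathfrak u}=V(x_{24}x_{35}-x_{25}x_{34})$ has codimension $1$.

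\textbf{The correct count.} It is $k(c_i+\cdots+c_{j-1})-k|Q|=k(c_i-c_j+k)$, and the real content is the generic finiteness you leave open. One clean way to get it:
\begin{itemize}
\item Let $R$ be the $N:=c_i+\cdots+c_{j-1}$ rows indexed by $C_i\cup\cdots\cup C_{j-1}$.
\item On the open set where the block $X[R,Q]$ has full column rank, $B\cdot\mathfrak u$ is exactly the locus where $X[R,C_j^{(k)}]$ lies in the column span of $X[R,Q]$. That locus is $B$-stable, and conjugating by a suitable $1+\sum_{q\in Q,\,m\in C_j^{(k)}}A_{qm}e_{qm}$ moves its points into $\mathfrak u$.
\item Hence the codimension is $k(N-|Q|)$, provided $X[R,Q]$ generically has full column rank.
\item By Hall's condition, full rank requires $c_j-k\le c_i$. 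This is the only place the hypothesis enters, and your argument never uses it.
\item It also requires that no column strictly between $C_i$ and $C_j$ is taller than $C_i$, i.e.\ that the moved tableau is standard. For $(1,3,1)$ with $5$ moved below $C_1$, the codimension is $3$, not $1$.
\end{itemize}

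\textbf{Step 3 (additivity).} Step~3 has the same flaw. The inclusion $\mathfrak u\cap\mathfrak u'\subset\overline{B\cdot(\mathfrak u\cap\mathfrak u')}$ only yields $\codim\le|X\cup X'|$, which exceeds $d+d'$ once the defects are nonzero. Reaching $d+d'$ requires a \emph{lower} bound on $\dim\overline{B\cdot(\mathfrak u\cap\mathfrak u')}$, i.e.\ that both defects survive on the intersection. ``Disjoint moved boxes give disjoint stabiliser defects'' is exactly that claim, and it is unproved.

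It is also false in general. For $(1,1,1,1)$, move $4$ below $C_2$ (so $X=\{x_{24},x_{34}\}$, $d=1$) and move $3$ below $C_1$ (so $X'=\{x_{13},x_{23}\}$, $d'=1$). The moved boxes and their targets are disjoint. But the first defect comes from $e_{34}$ pushing $x_{23}$ into $x_{24}$, and $x_{23}=0$ on $\mathfrak u'$. Indeed $\mathfrak u\cap\mathfrak u'=\mathrm{span}(x_{12},x_{14})$ lies in the $B$-stable space of matrices supported on the first row. So $\codim\overline{B\cdot(\mathfrak u\cap\mathfrak u')}\ge 3>d+d'$.

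Thus the additivity statement, read literally with $\mathfrak u\cap\mathfrak u'$, fails. It needs a stronger independence hypothesis, or should be stated for the subspace of the tableau in which both moves are made at once. In the example that tableau has columns $(1,3),(2,4)$; there $x_{34}$ is not excluded and the codimension is $2$.
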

\begin{proof}
This follows directly from \cite[\S6.3, Proposition~6.3(iii)]{FJ5}.
\end{proof}

\subsection{Reverse tableaux and factorisation}\label{ RT}

Recall that the nilfibre is defined by the augmentation ideal
$\mathscr I_+ \;=\; \bigl\langle I^{s_1}_{C_1,C_1'},\dots, I^{s_g}_{C_g,C_g'} \bigr\rangle
\subset \mathbb C[\mathfrak m],
$
where the pairs of neighboring column $(C_t,C_t')$ and $s_t$ their hight.
During the construction we obtain a sequence of partial reverse tableaux
$
\mathscr R_0,\mathscr R_1,\dots,\mathscr R_g,
$
where $\mathscr R_t$ denotes the tableau after $t$ implementations. To each stage $\mathscr R_t$
we associate the subspace $\mathfrak u(\mathscr R_t)\subset \mathfrak m$ defined by the moving
procedure (cf.~\cite[\S6.3]{FJ5}).

\medskip

\noindent\textbf{Free pairs.} At a given stage $t$, a neighbouring pair $(C,C')$ of height $s$ is called \emph{free} (as in our
earlier definition) if the corresponding invariant $I^s_{C,C'}$ is not annihilated at that stage,
i.e.\ if its restriction to $\mathfrak u(\mathscr R_t)$ is nonzero.

\medskip

The purpose of reverse tableaux is to record, at each stage, \emph{which factor} of the relevant
invariant vanish on the component being constructed. The next proposition explains
how the factorisation of $I^s_{C,C'}$ is governed by the combinatorics of the left trapezium.

The purpose of reverse tableaux is to record, at each stage, \emph{which factor} of the relevant
invariant vanish on the component being constructed. The next proposition explains
how the factorisation of $I^s_{C,C'}$ is governed by the combinatorics of the left trapezium.

\begin{proposition}\label{prop:free-pair-factorisation}
Let $\mathscr R_t$ be a reverse tableau at stage $t$, and let $(C,C')$ be a free pair of height $s$
at this stage. Let $ \ell\mathcal T_{C,C'}(\mathscr R_t)$ denote the left trapezium determined by $(C,C')$ in
$\mathscr R_t$. If  $\ell\mathcal T_{C,C'}(\mathscr R_t)$ contains exactly $n$ columns of height $s$, then the
restriction of $I^s_{C,C'}$ to $\mathfrak u(\mathscr R_t)$ admits a non-trivial factorisation into
$n$ factors, canonically indexed by these $n$ columns of height-$s$. Moreover, since $I^s_{C,C'}$ is
$P$-semi-invariant, the same factorisation holds on $B\cdot \mathfrak u(\mathscr R_t)$, and hence
on $\overline{B\cdot \mathfrak u(\mathscr R_t)}$.
\end{proposition}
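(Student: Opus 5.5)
The plan is to realise $I^s_{C,C'}|_{\mathfrak u(\mathscr R_t)}$ through the combinatorial-determinant description of Section~\ref{ssec:BS}, and to show that the system of composite right-going lines splits at each intermediate height-$s$ column. Let the $n$ columns of height $s$ in $\ell\mathcal T_{C,C'}(\mathscr R_t)$ be $D_1,\dots,D_n$, read from left to right, with $D_n$ the column carrying the right boundary $B'$; put $D_0$ for the left boundary $B$. These columns cut the left trapezium into $n$ consecutive slabs $]D_{r-1},D_r]$. The $r$-th factor will be the BS-type invariant attached to the pseudo-neighbouring pair $(D_{r-1},D_r)$ of height $s$. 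It is computed in the reverse tableau $\mathscr R_t$ as the combinatorial determinant over the slab $R^s\cap[D_{r-1},D_r]$.

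\textbf{Step 1 (separation).} Since $(C,C')$ is free, Proposition~\ref{prop:nonvan} shows that $R_s\cap\ell\mathcal T$ has no red entry. By Lemma~\ref{lem:blackcount}, the black entries of $\ell\mathcal T$ are exactly the $d$ values of $\ell R^s_{C,C'}$. Every monomial of $I^s_{C,C'}$ surviving on $\mathfrak u(\mathscr R_t)$ uses only non-excluded $x_{a,b}$. Definition~\ref{def:excl} then shows that a right-going line $\ell_{a,b}$ in such a monomial cannot pass from a box left of $D_r$ to a box right of $D_r$ while skipping $D_r$. Indeed, $D_r$ has height exactly $s$ and its black entries fill $R^s$. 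Because $\mathscr R_t$ is standard with multiplicities (Lemma~\ref{lem:structure}(i)), such a skipping line would either make $x_{a,b}$ excluded or force two lines through the same box of $D_r$, which contradicts disjointness. Consequently each line system decomposes uniquely as a concatenation of line systems on the slabs.

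\textbf{Step 2 (product).} By multilinearity (Definition~\ref{def:BS}), each surviving monomial is therefore the product of monomials of the slab determinants. Sign bookkeeping should be compatible, since the permutation of rows induced by the full system is the composite of the slab permutations. Hence $I^s_{C,C'}|_{\mathfrak u(\mathscr R_t)}=\prod_{r=1}^n J_r$, where $J_r$ is the slab invariant. Each $J_r$ is nonzero, because $I^s_{C,C'}|_{\mathfrak u(\mathscr R_t)}\neq0$ by freeness. Each $J_r$ is also nonconstant, since its slab contains at least one black entry of $\ell\mathcal T$, namely the black column $D_r$. The factorisation is therefore non-trivial, and it is canonically indexed by the $D_r$.

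\textbf{Step 3 ($B$-extension).} Each $J_r$ is the restriction to $\mathfrak u(\mathscr R_t)$ of a minor-type semi-invariant for the Levi blocks of $D_{r-1},D_r$. This follows by the same argument as for BS invariants \cite{BS,JM}, applied to the smaller parabolic. The product identity is an equality of polynomials on $\mathfrak u(\mathscr R_t)$, and both sides transform by characters of $B$; by $P$-semi-invariance of $I^s_{C,C'}$ the characters agree. The identity therefore extends to $B\cdot\mathfrak u(\mathscr R_t)$, and by continuity to its closure.

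The main obstacle is Step 1. It requires showing that no surviving line jumps across an intermediate height-$s$ column, and this needs careful use of the excluded-root convention (rightmost occurrence of $a$) together with shifted lower parts. I would first handle the case in which no red entries lie in $R^s$ between the $D_r$, and then induct on $t$ using Lemma~\ref{lem:nointerf}.
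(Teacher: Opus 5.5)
Your route differs from the paper's, but it has genuine gaps in Steps 1--2, and Step 3 aims at a conclusion that is false. The paper obtains the identity on $\mathfrak u(\mathscr R_t)$ by citing the product phenomenon of \cite[\S2.3.1]{FJ5}, then transports it using only the semi-invariance of $I^s_{C,C'}$.

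\textbf{Steps 1--2.} The expansion you use is not correct. The monomials of $I^s_{C,C'}$ are not only systems of $s$ lines covering $R^s\cap[C,C']$: their paths also pass through boxes below $R_s$ in intermediate columns of height $>s$. In the paper's example $(1,2,2,1)$ one has $I^1_{C_1,C_4}=\sum_{a\in\{2,3\},\,b\in\{4,5\}}x_{1a}x_{ab}x_{b6}$, whereas the description in Section~\ref{ssec:BS} produces only $x_{12}x_{24}x_{46}$.

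After implementing $(C_2,C_3)$, the restriction is $I^1_{C_1,C_4}|_{\mathfrak u}=(x_{12}x_{24}+x_{13}x_{34})\,x_{46}$, with $D_1=C_3$ (black entry $4$). Your slab determinant over $R^1\cap[D_0,D_1]$ is only $x_{12}x_{24}$, so $J_1J_2$ is not the restriction.

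Your Step 1 dichotomy also fails, already in the case you propose to treat first (no red entries in $R^s$). The step $x_{56}$ runs from the black $5$ (now under $3$ in $C_2$) across $D_1$ to $6$. It is not excluded, because the rightmost $5$ is the red one in $C_3$, to the left of $6$. With $s=1$ there is no second line for it to collide with. The crossing monomials $x_{12}x_{25}x_{56}$ and $x_{13}x_{35}x_{56}$ vanish only because their \emph{incoming} step $x_{25}$ or $x_{35}$ is vertical in $C_2$.

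So separation is a global property of whole paths. You would need to show that every path bypassing some $D_r$ passes through a value displaced by an earlier implementation and enters it along an excluded step. That is the real content of Step 1, and neither your local argument nor Lemma~\ref{lem:nointerf} supplies it.

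\textbf{Step 3.} This step cannot be repaired. The factors need not be restrictions of $B$-semi-invariants. In the example $J_2=x_{46}$, and $b=I+tE_{45}\in B$ pulls it back to $x_{46}+t\,x_{56}$, which is not proportional to $x_{46}$ on $\mathfrak u$.

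More decisively, $\overline{B\cdot\mathfrak u(\mathscr R^2)}=V(x_{24}x_{35}-x_{25}x_{34})$. This hypersurface is irreducible and $B$-stable, and it strictly contains $\mathfrak u=\{x_{25}=x_{35}=0\}$, since $b$ sends $x_{25}$ to $-t\,x_{24}$ there. Its coordinate ring is a domain graded by degree and torus weight. The relation ideal has no nonzero element in the degree and weight of $I^1_{C_1,C_4}$. Hence any factorisation would be into homogeneous pieces, one of them of degree $1$. Such a piece is a single variable dividing all four monomials, which is impossible.

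So $I^1_{C_1,C_4}$ is \emph{irreducible} on $\overline{B\cdot\mathfrak u(\mathscr R^2)}$, even though it splits on $\mathfrak u(\mathscr R^2)$. The closing assertion of the statement is therefore false as an identity of regular functions on $\overline{B\cdot\mathfrak u(\mathscr R_t)}$. The paper's own justification has the same defect: semi-invariance of the product says nothing about the factors.

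What survives is only the set-theoretic identity $V(I^s_{C,C'})\cap B\cdot\mathfrak u=\bigcup_r B\cdot\bigl(V(J_r)\cap\mathfrak u\bigr)$. Passing to the closure would require separate control of the boundary.
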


\begin{proof}
By construction, the invariant $I^s_{C,C'}$ is obtained from the Benlolo--Sanderson construction
as a lowest-degree coefficient of the appropriate minor (see~\cite[\S2.3.1]{FJ5}).
When two columns of height $s$ are separated by intermediate columns of height $s$, the corresponding
semi-invariant decomposes as a product of the factors coming from the height-$s$ neighbouring pairs
that lie between them (this is the product phenomenon recalled in~\cite[\S2.3.1]{FJ5}).
At stage $t$, the $n$ height-$s$ columns inside  $\ell\mathcal T_{C,C'}(\mathscr R_t)$ determine exactly $n$ such
intermediate height-$s $ steps, hence yield $n$ factors on restriction to $\mathfrak u(\mathscr R_t)$.

Finally, $I^s_{C,C'}$ is $B$-semi-invariant, so its restriction to $B\cdot \mathfrak u(\mathscr R_t)$
is obtained from its restriction to $\mathfrak u(\mathscr R_t)$; in particular
the factorisation is the same on $B\cdot \mathfrak u(\mathscr R_t)$ and on its Zariski closure.
Non-triviality follows from the hypothesis that $(C,C')$ is free, i.e.\ $I^s_{C,C'}|_{\mathfrak u(\mathscr R_t)}\neq 0$.
\end{proof}
\medskip

%==========================================================================
\section{Factorisation of BS Invariants}\label{sec:factor}
%==========================================================================
In this section we explain, that the reverse-tableau construction is governed by a factorisation property of Benlolo--Sanderson invariants. 
When restricted to the subspace associated with a partial reverse tableau, such an invariant may split into factors attached to pseudo-neighbouring columns. 
The selected red entry determines the factor whose vanishing is imposed.

\subsection{Pseudo-neighbouring columns and the factorisation theorem}

\begin{definition}\label{def:pseudonb}
At stage $t$, let $(C,C')$ be a free pair of height $s$ (i.e.\
$I^s_{C,C'}|_{\mathfrak{u}(\mathscr{R}^t)} \ne 0$).  Let
$\widetilde{C}_0 = B,\, \widetilde{C}_1, \dots, \widetilde{C}_n = B'$
be the columns of height $s$ %in $\mathcal{T}_{C,C'}(\mathscr{R}^t)$
 between the borders $B$ and $B'$, ordered
left to right.  The pair $(\widetilde{C}_i, \widetilde{C}_{i+1})$ is called
\emph{$\mathscr{R}^t$-pseudo-neighbouring} if there is no column of height
$s$ strictly between them.  The index sets
\begin{align*}  I_i(\mathscr{R}^t) &:=
    \{\text{black entries in } [B,B'] %\mathcal{T}_{C,C'}(\mathscr{R}^t)
       \cap [\widetilde{C}_i, \widetilde{C}_{i+1}[\,\},\\
  J_i(\mathscr{R}^t) &:=
    \{\text{black entries in } [B,B']  %\mathcal{T}_{C,C'}(\mathscr{R}^t)
       \cap\, ]\widetilde{C}_{i}, \widetilde{C}_{i+1}]\},
\end{align*}
with the convention that when $i = 0$, all entries (black and red) of
$\widetilde{C}_0 \cap R^s = C \cap R^s$ are included in $I_0(\mathscr{R}^t)$.
\end{definition}

\medskip

\paragraph{Example.}
Consider the composition $(1,2,3,3,1,2)$ with $C=C_2$, $C'=C_6$, and $s=2$.
We begin by implementing the neighbouring pairs $(C_3,C_4)$ and $(C_1,C_5)$; these two
implementations recolour the entries $9$ and $10$ in red. The resulting tableau is displayed in
Figure~\ref{fig:ex-123312-first-steps}.

At this stage, the  trapezium $\mathcal T_{C,C'}(\mathscr R_2)$ contains exactly four
columns of height $2$, namely
\[
\widetilde C_{0}=C=(2,3),\qquad
\widetilde C_{1}=(4,8),\qquad
\widetilde C_{2}=(7,10),\qquad
\widetilde C_{3}=C'=(11,12).
\]
Consequently, the $\mathscr R_t$-pseudo-neighbouring pairs of height $2$ are \[
(\widetilde C_0,\widetilde C_1),\qquad
(\widetilde C_1,\widetilde C_2),\qquad
(\widetilde C_2,\widetilde C_3).
\]

\begin{figure}[H]
\begin{center}
\begin{tikzcd}[row sep=0.1em, column sep=0.1em]
      & C_1 & C_2 & C_3 & C_4 & C_5 & C_6 \\
R_1   &  1  & \cir 2  &  4  &  7  &  10  & \cir{11} \\
R_2   &     & \cir 3  &  5  &  8  &      & \cir{12} \\
R_3   &     &         &  6  &  9  &      &
\end{tikzcd}$\xrightarrow{(C_3,C_4),\,(C_1,C_5)}$
\begin{tikzcd}[row sep=0.1em, column sep=0.1em]
      & C_1 & C_2 & C_3 & C_4 & C_5 & C_6 \\
R_1   &  1  & \cir 2  &  4  &  7  &  \red{10}  & \cir{11} \\
R_2        & \cir 3  &  5  &  8  & 10         && \cir{12} \\
R_3        &         &  6  &  \red{9}  &      & \\
R_4        &              &  9        &      &
\end{tikzcd}

\caption{
The reverse tableau for the composition $(1,2,3,3,1,2)$ with $C=C_2$, $C'=C_6$, and $s=2$.
After implementing $(C_3,C_4)$ and $(C_1,C_5)$, the entries $9$ and $10$ become red.
In the resulting trapezium, the height-$2$ columns are $\widetilde C_0,\widetilde C_1,\widetilde C_2,\widetilde C_3$,
so the pseudo-neighbouring pairs are $(\widetilde C_0,\widetilde C_1)$, $(\widetilde C_1,\widetilde C_2)$, and
$(\widetilde C_2,\widetilde C_3)$.
}
\label{fig:ex-123312-first-steps}
\end{center}
\end{figure}
\medskip
The following  factorisation result is the algebraic reason why red entries can record components.  When a product vanishes on an irreducible component, primality forces at least one factor to vanish, and the reverse tableau records that factor.
\medskip
\begin{proposition}[ {\cite[Prop.\,5.1]{FJ6}}]\label{prop:factor}
Let $\mathscr{R}^t$ be a reverse tableau and $(C,C')$ a free pair of height
$s$ at stage $t$.  If the left trapezium $\ell\mathcal{T}_{C,C'}(\mathscr{R}^t)$
contains exactly $n$ columns of height $s$, defining $n$ pseudo-neighbouring
pairs as above, then
\[
  I^s_{C,C'}\Big|_{\mathfrak{u}(\mathscr{R}^t)}
  \;=\; \prod_{i=0}^{n-1}
        M_{I_i(\mathscr{R}^t),\, J_i(\mathscr{R}^t)}(X),
\]
where $X = (x_{pq})$ is the canonical matrix of $\mathfrak{u}(\mathscr{R}^t)$
and $M_{I,J}(X)$ denotes the highest-degree minor $\Delta(X[I,J])$.  Moreover,
since $I^s_{C,C'}$ is $P$-semi-invariant, the same factorisation holds on
$\overline{B \cdot \mathfrak{u}(\mathscr{R}^t)}$.
\end{proposition}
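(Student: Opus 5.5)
I would prove the factorisation by showing that, after restriction to $\mathfrak{u}(\mathscr{R}^t)$, the matrix whose minor produces $I^s_{C,C'}$ becomes block triangular. The blocks are cut out by the pseudo-neighbouring columns $\widetilde C_1,\dots,\widetilde C_{n-1}$, and the determinant of a block triangular matrix is the product of the determinants of its diagonal blocks.

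\textbf{Step 1: monomials as line systems.} I start from the description recalled before Lemma~\ref{lem:degree}. The monomials of $I^s_{C,C'}$ are indexed by systems of pairwise disjoint composite right-going lines in the rectangle. Equivalently, $I^s_{C,C'}$ is, up to sign, the maximal minor of a matrix $X[I,J]$. Here the rows run over the entries of $[C,C'[$ in $R^s$, the columns run over the entries of $]C,C']$ in $R^s$, and the entries are the coordinates $x_{pq}$. By Lemma~\ref{lem:blackcount}, the black entries of the left trapezium $\ell\mathcal{T}^s_{C,C'}(t)$ are numerically exactly those of $\ell R^s_{C,C'}$, and there are $d$ of them. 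Lemma~\ref{lem:rightblack} and Lemma~\ref{lem:nointerf}(c) show that the boundary values persist. So I can re-index rows and columns by the boxes of $[B,B']$ and $]B,B']$ in $\mathscr{R}^t$ without changing the polynomial. I then partition the row index set as $I=\bigsqcup_i I_i(\mathscr{R}^t)$ and the column index set as $J=\bigsqcup_i J_i(\mathscr{R}^t)$, following Definition~\ref{def:pseudonb}. These pieces have matching cardinalities, because each $\widetilde C_i$ has exactly $s$ entries in $R^s$ and the count is preserved.

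\textbf{Step 2: vanishing below the block diagonal.} This is the key step. Take $p\in I_i$ and $q\in J_{i'}$ with $i'<i$, so that the column of $q$ lies weakly left of $\widetilde C_i$ while $p$ lies in $[\widetilde C_i,\widetilde C_{i+1}[$. I would show that $x_{pq}\in X(\mathscr{R}^t)$, so that it vanishes on $\mathfrak{u}(\mathscr{R}^t)$, or that $x_{pq}$ is not a coordinate of $\mathfrak m$ at all. The argument uses Definition~\ref{def:excl}: the rightmost occurrence of $p$ lies in a column strictly to the right of the black $q$. If $p<q$, the root is excluded. If $p>q$, then $x_{pq}\notin\mathfrak m$, by Lemma~\ref{lem:structure}(i) and the standard-with-multiplicities ordering.

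\textbf{Step 3: conclude.} With the block structure in hand, the Laplace expansion (equivalently, the observation that any line system must cross each $\widetilde C_i$ using exactly its $s$ boxes) gives
\[
\Delta(X[I,J])=\pm\prod_i \Delta(X[I_i,J_i]).
\]
This is the claimed product. Non-triviality of the factors follows from freeness of the pair. Lemma~\ref{lem:stabexcl} guarantees the subspaces are nested, so the restriction is compatible across stages. Finally, because $I^s_{C,C'}$ is $P$-semi-invariant, $I^s_{C,C'}(b\cdot x)=\chi(b)\,I^s_{C,C'}(x)$, so the factorisation identity on $\mathfrak{u}(\mathscr{R}^t)$ transports to $B\cdot\mathfrak{u}(\mathscr{R}^t)$ and, by density, to its closure. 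This last step is understood as a factorisation of the pulled-back function on $B\times\mathfrak{u}$.

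\textbf{Main obstacle.} I expect the difficulty to lie in Step 2, specifically for red duplicates and for entries that have been shifted across some $\widetilde C_i$. I would first check, using Lemma~\ref{lem:nonacq}, that no red entry of $R_s$ lies inside the left trapezium, so the $\widetilde C_i$ really separate the tableau. I would then check that the shifting in Section~\ref{ssec:RTconstruction} stops at columns of height exactly $s$, which are precisely the $\widetilde C_i$. Together these would give that every black entry strictly between $\widetilde C_i$ and $\widetilde C_{i+1}$ stays there.
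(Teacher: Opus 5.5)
The gap is in Step~1, and Step~2 inherits it. You identify $I^s_{C,C'}$ with the maximal minor of $X[I,J]$, where $I,J$ are the entries of $[C,C'[$ and $]C,C']$ lying in $R^s$. This is faithful only when no column of $]C,C'[$ is taller than $s$.

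When taller intermediate columns occur, the composite lines may pass through any of their boxes, including boxes below $R_s$. In that case $I^s_{C,C'}$ is the lowest-order coefficient in $c$ of a minor of $c\,\mathrm{Id}+X$ (Definition~\ref{def:BS}), not a minor of $X$. The example in Section~\ref{sec:example} already shows this.
Take $(1,2,2,1)$ with $(C,C')=(C_1,C_4)$ and $s=1$. Your matrix has rows $\{1,2,4\}$ and columns $\{2,4,6\}$, and its determinant is $x_{12}x_{24}x_{46}$. By contrast, $I^1_{C_1,C_4}$ has four monomials, and they involve the entries $3,5$ of row $R_2$. On $\mathfrak u(\mathscr R^2)$ your first diagonal block gives $x_{12}x_{24}$, but the actual factor is $x_{12}x_{24}+x_{13}x_{34}=(X^2)_{14}$, which is not a minor of $X$.

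So the exclusion argument of Step~2 is sound for the entries it treats, but it is applied to the wrong matrix. When every column of $]C,C'[$ has height $<s$, the plain-minor model is correct, and your block-triangular argument is then a more explicit route than the paper's citation.

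The argument also does not transfer to the faithful model. There the index sets must contain all black entries of the relevant columns. Each entry $p$ of an intermediate $\widetilde C_i$ then indexes both a row in $I_i$ and a column in $J_{i-1}$. The diagonal term $c$ at $(p,p)$ therefore lies in a sub-diagonal block; in the example, row $4\in I_1$ meets column $4\in J_0$ in the entry $c$. The matrix is not block triangular, and the product appears only in the lowest-order coefficient.

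What is missing is an argument at that level. You need to show that on $\mathfrak u(\mathscr R^t)$ every line system contributing to the lowest-order coefficient passes through all $s$ boxes of each $\widetilde C_i$. It then splits into line systems for the consecutive pseudo-neighbouring pairs.

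Exclusion alone cannot give this, because it only kills backward entries. Forward entries that jump across $\widetilde C_i$, such as $x_{16}$ and $x_{56}$ in the example, survive on $\mathfrak u(\mathscr R^t)$. They drop out only because they lower the degree. This is the product phenomenon the paper invokes, with Lemma~\ref{lem:blackcount} supplying the degree bookkeeping. Your parenthetical remark in Step~3 names it but does not prove it.

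Separately, reading the last clause as a factorisation of the pull-back to $B\times\mathfrak u(\mathscr R^t)$ is weaker than the statement. It does not make the $M_{I_i,J_i}$ into functions on $\overline{B\cdot\mathfrak u(\mathscr R^t)}$. A factorisation in that coordinate ring is what the primeness argument of Section~\ref{surjectivity} uses.
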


\begin{proof}
%By \cite[2.3.1]{FJ5} and \cite[1.10]{FJ2}, when 
Two columns of height $s$
are separated by intermediate columns of the same height, the BS invariant
$I^s_{C,C'}$ who is highest-degree term of a minor between two neighbouring
 diagonal blocks  of the same height that is represented by two neighboring columns $C,C'$ in our tableau, is  decomposes as a product of minors
associated with the intermediate pseudo-neighbouring pairs.   At stage $t$, the $n$ height-$s$ columns in
$\ell\mathcal{T}_{C,C'}(\mathscr{R}^t)$ lead to exactly $n$ intermediate
pseudo-neighbouring pairs of height $s$.  Each factor
$M_{I_i(\mathscr{R}^t), J_i(\mathscr{R}^t)}(X)$ is the highest-degree minor
indexed by the black entries inside of the $i$-th pseudo-neighbouring
pair $[\widetilde{C}_{i-1}, \widetilde{C}_{i}]$ .  Since $X \in \mathfrak{m}$ is strictly upper triangular, only variables
$x_{pq}$ with $p < q$ contribute, and the condition that $x_{pq} \in
\mathfrak{u}(\mathscr{R}^t)$ (not excluded precisely when the black entry $p$ is
less than the black entry $q$ and lies strictly to its left.) ensures the minor is well-defined.

The degree of $M_{I_i, J_i}$ equals the number of black entries in
$\ell\mathcal{T}_{C,C'}(\mathscr{R}^t) \cap\, ]\widetilde{C}_i,
\widetilde{C}_{i+1}] \cap R^s$, consistently with Lemma~\ref{lem:blackcount}.
Adding the degrees over all $i$ recovers $d = \deg I^s_{C,C'}$.

Non-triviality of the product follows from the freeness hypothesis; the
product is a factorisation of $I^s_{C,C'}|_{\mathfrak{u}(\mathscr{R}^t)}$
and the latter is non-zero.  $B$-semi-invariance of $I^s_{C,C'}$ extends the
identity to $\overline{B \cdot \mathfrak{u}(\mathscr{R}^t)}$.
\end{proof}

\subsection{Selection of a vanishing factor and the recording theorem}

Given an irreducible subvariety $\mathscr{C}_t \subset
\overline{B \cdot \mathfrak{u}(\mathscr{R}^t)}$ on which $I^s_{C,C'}$
vanishes, let  $\mathfrak{q}_{\mathscr{R}^t}$ %(or $\mathfrak{q}_t$ when there is not confusion)
 be its defining prime ideal.  Then
$\prod_{i=0}^{n-1} I^s_{\widetilde{C}_i, \widetilde{C}_{i+1}} \in
\mathfrak{q}_{\mathscr{R}^t}$.  Since $\mathfrak{q}_{\mathscr{R}^t}$ is prime, at least one factor lies
in $\mathfrak{q}_{\mathscr{R}^t}$.  The rule governing the transition
$\mathscr{R}^t \to \mathscr{R}^{t+1}$ selects such an index $i_0$: the
bottom entry of $\widetilde{C}_{i_0+1}$ in $R_s$ is recoloured red and
shifted into $R_{s+1}$, enforcing the vanishing of
$I^s_{\widetilde{C}_{i_0}, \widetilde{C}_{i_0+1}}$ on
$\mathfrak{u}(\mathscr{R}^{t+1})$.\\

Consequently, on $\mathfrak u(\mathscr R_t)$ one has the factorisation
\[
I^s_{C,C'}\big|_{\mathfrak u(\mathscr R_t)}
\;=\;
\prod_{i=0}^{n-1} I^s_{\widetilde C_i,\widetilde C_{i+1}}.
\]
Therefore %by the equation \ref{deg}
 one has $$\deg (I^s_{C,C'}\big|_{\mathfrak u(\mathscr R_t)})=\sum_{i=0}^{n-1} \deg(I^s_{\widetilde C_i,\widetilde C_{i+1}}).$$ 
\medskip
Now let $\mathscr C_t\subset \overline{B\cdot \mathfrak u(\mathscr R_t)}$ be an irreducible variety
on which $I^s_{C,C'}$ vanishes. Let $\mathfrak q_{\mathscr R_t}$ be the prime ideal defining $\mathscr C_t$. The above product decomposition shows that
\[
\prod_{i=0}^{n-1} I^s_{\widetilde C_i,\widetilde C_{i+1}}\in \mathfrak q_{\mathscr R_t},
\]
hence, since $\mathfrak q_{\mathscr R_t}$ is prime, at least one factor
$I^s_{\widetilde C_i,\widetilde C_{i+1}}$ belongs to $\mathfrak q_{\mathscr R_t}$.
The rule producing the red entry at stage $t$ selects this index $i$; equivalently, the passage
from $\mathscr R_t$ to $\mathscr R_{t+1}$ enforces the vanishing of the selected factor.

At the level of tableaux, $\mathscr R_{t+1}$ is obtained by taking the bottom entry of
$\widetilde C_{i+1}$, recolouring it red, and shifting it down by one row into $R_{s+1}$.
With this modification, both $I^s_{C,C'}$ and $I^s_{\widetilde C_i,\widetilde C_{i+1}}$ vanish on
$\mathfrak u(\mathscr R_{t+1})$ and thus on $\overline{B\cdot \mathfrak u(\mathscr R_{t+1})}$.

Iterating over $t=1,\dots,g$, the complete reverse tableau $\mathscr R_g$ yields a coherent choice of
a vanishing factor for each generator $I^{s_t}_{C_t,C_t'}$ of $\mathscr I_+$. Therefore the variety
defined by these imposed vanishing conditions is contained in $\mathscr N$, and by construction it
is $B$-stable and irreducible; hence it defines an element of $\Irr(\mathscr N)$.

\begin{theorem}%[{\cite[Thm.\,5.2]{FJ6}}]
               \label{thm:RT-records-factors} %[Reverse tableaux record vanishing factors]\label{thm:RT-records-factors}
At each stage $t$, the construction of $\mathscr R_t$ produces a distinguished red entry. For the
free pair implemented at that stage, this red entry singles out one of the $n$ factors in
Proposition~\ref{prop:free-pair-factorisation} and imposes its vanishing on the component under
construction. Consequently, a complete reverse tableau $\mathscr R=\mathscr R_g$ encodes a consistent
sequence of factor-vanishing conditions for the generators of $\mathscr I_+$, and hence determines
an element of $\Irr(\mathscr N)$.
\end{theorem}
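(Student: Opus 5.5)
The argument is an induction on the stage $t$. At each step we track three things: the subspace $\mathfrak u(\mathscr R_t)$, the irreducible variety $\overline{B\cdot\mathfrak u(\mathscr R_t)}$, and the set of generators already forced to vanish on it. The inductive hypothesis at stage $t$ is the following. The closure $\overline{B\cdot\mathfrak u(\mathscr R_t)}$ is irreducible, since it is the closure of the image of the irreducible variety $B\times\mathfrak u(\mathscr R_t)$. The invariants $I_1,\dots,I_{t-1}$ vanish on it, and its codimension in $\mathfrak m$ is $t-1$. The base case $t=1$ is $\mathscr R_1=\mathscr T$ with $\mathfrak u=\mathfrak m$.

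For the inductive step, let $P_t=(C,C')$ be the pair being implemented. If $(C,C')$ is not free, then $I_t$ already vanishes, and the stage only records a red entry that adds no new condition; this case needs a short separate check that the codimension bookkeeping is still consistent. If $(C,C')$ is free, apply Proposition~\ref{prop:factor}: on $\overline{B\cdot\mathfrak u(\mathscr R_t)}$, $I_t$ is a product of the minors $M_{I_i,J_i}$ indexed by the pseudo-neighbouring pairs $(\widetilde C_i,\widetilde C_{i+1})$.

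The red entry chosen at stage $t$ is the bottom entry of $\widetilde C_{i_0+1}$ in $R_s$, obtained via Step~1 and the Enabling Proposition~\ref{prop:enabling}. I claim this forces exactly the factor $M_{I_{i_0},J_{i_0}}$ to vanish on $\mathfrak u(\mathscr R_{t+1})$. To prove it, apply Lemma~\ref{lem:blackcount} and Proposition~\ref{prop:nonvan} to the pseudo-neighbouring pair itself, treated as a neighbouring pair of height $s$ in the sub-tableau $[\widetilde C_{i_0},\widetilde C_{i_0+1}]$. The new red entry lies in $R_s$ inside its left trapezium, so that factor restricts to zero. Vanishing of $I_t$ then follows from the product formula together with Theorem~\ref{thm:vanish}. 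Lemma~\ref{lem:stabexcl} gives $\mathfrak u(\mathscr R_{t+1})\subset\mathfrak u(\mathscr R_t)$, so the earlier vanishings $I_1,\dots,I_{t-1}$ persist.

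The codimension statement is the step I expect to be the main obstacle. One inequality is easy: each factor is nonzero on the previous variety because $(C,C')$ is free, so by Krull's principal ideal theorem the hypersurface it cuts has every component of codimension at most one more. For the other inequality, I would compute $\codim\overline{B\cdot\mathfrak u(\mathscr R_{t+1})}$ directly from the move of one box, using Corollary~\ref{codim} applied to the subcolumn shifted in Step~2. The point is to show that the move excludes exactly one new independent direction modulo $B$, so the codimension is exactly $t$. It then remains to show that $\overline{B\cdot\mathfrak u(\mathscr R_{t+1})}$ is a full irreducible component of $V(M_{I_{i_0},J_{i_0}})\cap\overline{B\cdot\mathfrak u(\mathscr R_t)}$, not merely contained in one. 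For this I would combine equality of dimensions with irreducibility. Multi-linearity of the BS invariants, which gives reducedness and multiplicity one as in \cite[3.6]{FJ6}, rules out an embedded or nonreduced structure.

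Iterating to $t=\mathbf g$, the variety $\overline{B\cdot\mathfrak u(\mathscr R_{\mathbf g})}$ is irreducible and has codimension $\mathbf g$. It lies in $\mathscr N$ by Theorem~\ref{thm:vanish}. By Krull's theorem every component of $\mathscr N$ has codimension at most $\mathbf g$, so an irreducible closed subset of $\mathscr N$ of codimension $\mathbf g$ is a component of $\mathscr N$. This gives the final assertion that $\mathscr R$ determines an element of $\Irr(\mathscr N)$.
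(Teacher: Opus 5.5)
Your final step uses Krull's theorem in the wrong direction. Write $X_t:=\overline{B\cdot\mathfrak u(\mathscr R_t)}$. Krull (Theorem~\ref{thm:Krull}, iterated as in Section~\ref{sec:Krull}) says only that every irreducible component of $\mathscr N=V(I_1,\dots,I_{\mathbf g})$ has codimension \emph{at most} $\mathbf g$, i.e.\ dimension at least $\dim\mathfrak m-\mathbf g$. That is compatible with $X_{\mathbf g}$, irreducible of codimension $\mathbf g$ inside $\mathscr N$, lying strictly inside a component of $\mathscr N$ of codimension $<\mathbf g$.

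What you need is the opposite bound $\dim\mathscr N\le\dim\mathfrak m-\mathbf g$, i.e.\ every prime minimal over $\mathscr I_+$ has height $\ge\mathbf g$. This is the standing fact $\codim_{\mathfrak m}\mathscr N=\mathbf g$ that the paper uses in Lemma~\ref{lem:OV} and in the proof of Theorem~\ref{thm:surjectivity}. With it, $X_{\mathbf g}$ lies in a component $Z$ with $\dim Z\le\dim\mathscr N=\dim X_{\mathbf g}$, so $Z=X_{\mathbf g}$, and Krull plays no role. This is in effect how the paper concludes. It does not extract membership in $\Irr(\mathscr N)$ from a stage-by-stage induction; it takes $\dim\mathscr C_{\mathscr R}=\dim\mathfrak m-\mathbf g$ and $\mathscr C_{\mathscr R}\in\Irr(\mathscr N)$ from Propositions~\ref{prop_dim} and~\ref{prop_irreducibility}, imported from \cite{FJ6}.

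Your chain cannot substitute for this global input. Knowing that $X_{t+1}$ is a component of $V(M_{I_{i_0},J_{i_0}})\cap X_t$ says nothing about components of $\mathscr N$ through $X_{\mathbf g}$, which need not lie in $X_{\mathbf g-1}$. Note also what actually makes $X_{t+1}$ a component of that cut: it is $M_{I_{i_0},J_{i_0}}\ne 0$ on the irreducible $X_t$, so every component of the cut is a proper closed subset of $X_t$. That is not Krull, and there is no analogue of it for $\mathscr N$ inside $\mathfrak m$.

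The stagewise codimension count is also not secured. The lower bound $\codim X_{t+1}\ge t$ is automatic by induction: $I_t\ne0$ on $X_t$ by freeness, while $I_t=0$ on $X_{t+1}$ by Proposition~\ref{prop:nonvan} and semi-invariance. The upper bound is the real content, and for $t=\mathbf g$ it is exactly Proposition~\ref{prop_dim}, which the paper imports rather than proves. Corollary~\ref{codim} does not supply it as stated: it concerns moving subcolumns of the standard tableau $\mathscr T$, whereas the stage-$t$ move is made in a reverse tableau already carrying red duplicates and shifted partial columns.

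Your factor-vanishing step has two further holes. First, Lemma~\ref{lem:blackcount} and Proposition~\ref{prop:nonvan} are proved for genuine neighbouring pairs of $\mathscr D$ tracked from $\mathscr T$, not for a pseudo-neighbouring pair ``viewed in a sub-tableau''. Second, vanishing on $\mathfrak u(\mathscr R_{t+1})$ passes to $X_{t+1}$ only if the zero set of the factor is $B$-stable, and this fails for the individual minors. In the $(1,2,2,1)$ example, $x_{46}(b\cdot X)=(b^{-1})_{66}\bigl(b_{44}x_{46}(X)+b_{45}x_{56}(X)\bigr)$, and $x_{56}$ is not excluded in $\mathscr R^3_{(5,6)}$. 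So the selected factor $x_{46}$ does not vanish on $\overline{B\cdot\mathfrak u(\mathscr R^3_{(5,6)})}$, and the inclusion $X_{t+1}\subset V(M_{I_{i_0},J_{i_0}})\cap X_t$ you rely on is unjustified.

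Two smaller points. The non-free case never occurs, since by Lemma~\ref{lem:nonacq} and Proposition~\ref{prop:nonvan} each pair is free at its own stage. Your irreducibility argument ($X_t$ is the closure of the image of the irreducible $B\times\mathfrak u(\mathscr R_t)$) is correct and more direct than the paper's appeal to Proposition~\ref{prop_irreducibility}.
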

\begin{proof}
Fix a stage $t$ and let $(C,C')$ be the free pair of height $s$ considered at this step. By
Proposition~\ref{prop:free-pair-factorisation}, on $\mathfrak u(\mathscr R_t)$ one has a product
decomposition of $I^s_{C,C'}$ into $n$ factors indexed by the height-$s$ columns in
 $\ell\mathcal T_{C,C'}(\mathscr R_t)$.

Let $\mathscr C_t\subset \overline{B\cdot \mathfrak u(\mathscr R_t)}$ be an irreducible variety on which
$I^s_{C,C'}$ vanishes. Writing $\mathfrak q_{\mathscr R_t}$ for the prime ideal defining $\mathscr C_t $, the product
decomposition shows that this product lies in $\mathfrak q_{\mathscr R_t}$, hence at least one factor lies in
$\mathfrak q_{\mathscr R_t}$. The algorithmic rule that produces the red entry at stage $t$ specifies which of
these factors is chosen to vanish; equivalently, passing from $\mathscr R_t$ to $\mathscr R_{t+1}$
adds precisely the relation forcing that factor to be zero on the next stage.

Iterating over $t=1,\dots,g$, the complete reverse tableau $\mathscr R_g$ yields a coherent choice of
a vanishing factor for each generator $I^{s_t}_{C_t,C_t'}$ of $\mathscr I_+$. Therefore the variety
defined by these imposed vanishing conditions is contained in $\mathscr N$, and by construction it
is $B$-stable and irreducible; hence it defines an element of $\Irr(\mathscr N)$.
%\end{proof}
%
%
%
%Reverse tableaux record vanishing factors.
%\begin{theorem}[
%                {\cite[Thm.\,5.2]{FJ6}}]\label{thm:RTrecords}
%At each stage $t$, the construction of $\mathscr{R}^{t+1}$ from $\mathscr{R}^t$
%singles out exactly one factor from the product in
%Proposition~\ref{prop:factor} and forces its vanishing on
%$\mathfrak{u}(\mathscr{R}^{t+1})$.  A complete reverse tableau
%$\mathscr{R} = \mathscr{R}^{\mathbf{g}}$ encodes a coherent sequence of such
%vanishing conditions  one for each generator of $\mathscr{I}_+$  and
%thereby determines an element $\mathscr{C}_{\mathscr{R}} \in \Irr(\mathscr{N})$.
%\end{theorem}
%
%\begin{proof}
%The preceding paragraph established the selection mechanism at a single stage.
%Iterating over $t = 1, \dots, \mathbf{g}$, the complete reverse tableau
%records a sequence of choices, one for each generator $I_{t}$ of $\mathscr{I}_+$
%in order.  At each step the resulting variety remains $B$-stable (by
%construction) and irreducible (since we are enforcing the vanishing of one
%factor of an already-zero product, picking from a prime ideal).  By
Theorem~\ref{thm:vanish}, all generators of $\mathscr{I}_+$ vanish on
$\mathfrak{u}(\mathscr{R}^{\mathbf{g}})$, so the final variety is contained
in $\mathscr{N}$.  Its irreducibility is established in
Proposition~\ref{prop_irreducibility} below via covering.
\end{proof}
\medskip

The following two propositions, Proposition~\ref{prop_dim} and
Proposition~\ref{prop_irreducibility}, were proved in \cite{FJ6}; their proofs are part of the author’s contribution to that work. They  show that every complete
reverse tableau gives rise to an irreducible component of the nilfibre. The
first establishes the containment in the nilfibre together with the expected
dimension, and the second proves irreducibility.
\begin{proposition}[]\label{prop_dim}
For every complete reverse tableau $\mathscr R$, one has
$\mathscr C_{\mathscr R}\subset \mathscr N$.
Moreover, $\dim \mathscr C_{\mathscr R}=\dim\mathfrak m-\mathbf g$.
\end{proposition}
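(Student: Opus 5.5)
The statement has two parts: the containment $\mathscr C_{\mathscr R}\subset\mathscr N$, and the dimension equality $\dim\mathscr C_{\mathscr R}=\dim\mathfrak m-\mathbf g$. I treat them separately and prove the dimension equality by combining an upper bound from Krull's theorem with a lower bound from the combinatorics of excluded roots.

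\emph{Containment.} Put $\mathscr C_{\mathscr R}:=\overline{B\cdot\mathfrak u}$ with $\mathfrak u=\mathfrak u_{\mathbf g+1}$. By Theorem~\ref{thm:vanish}, every generator $I^{s_i}_{C_i,C_i'}$ vanishes on $\mathfrak u$. Each generator is $B$-semi-invariant, so it also vanishes on $B\cdot\mathfrak u$, and hence on its closure. Since these generators span $\mathscr I_+$, this gives $\mathscr C_{\mathscr R}\subset V(\mathscr I_+)=\mathscr N$.

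\emph{Upper bound on dimension.} I work inductively along the stages $t=1,\dots,\mathbf g$, setting $V_t:=\overline{B\cdot\mathfrak u_{t+1}}$. By Lemma~\ref{lem:stabexcl} the subspaces $\mathfrak u_t$ are nested decreasing, so the $V_t$ are nested as well. Each $V_t$ is irreducible, being the closure of the image of the irreducible variety $B\times\mathfrak u_{t+1}$.

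At stage $t$, consider the pair $P_t$ and use Proposition~\ref{prop:factor}. Before implementation, $I_t$ restricted to $V_{t-1}$ is a nonzero product. Implementing $P_t$ makes exactly one pseudo-neighbouring factor vanish, namely the one singled out by the red entry (Theorem~\ref{thm:RT-records-factors}). Therefore $V_t$ is contained in $V_{t-1}\cap V(I_t)$, which is a proper hypersurface section of $V_{t-1}$.

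The remaining content of this part is to show that $V_t$ is in fact a full component of $V_{t-1}\cap V(\text{factor})$, and hence has codimension exactly one in $V_{t-1}$ by Krull's principal ideal theorem. For this I would use the black count of Lemma~\ref{lem:blackcount}. Freeness of the pair before stage $t$ keeps the degree $d$ intact. The recolouring in Step~1, together with the shifting in Step~2, removes from $\mathfrak u$ exactly the root vectors needed to kill the chosen minor. So the pass from $\mathfrak u_t$ to $\mathfrak u_{t+1}$ is a moving of subcolumns of the kind described in Corollary~\ref{codim}.

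\emph{Lower bound, and the main obstacle.} I would compute $\codim_{\mathfrak m}\overline{B\cdot\mathfrak u}$ directly with Corollary~\ref{codim}. Each implementation moves one lowest black entry from $R_s$ of $C''$ into $R_{s+1}$ of $C'''$. This is a move of a subcolumn of height one onto a column of height $s$ with $s$ entries above, so the formula of Corollary~\ref{codim} should contribute $1\cdot[s-(s-1)]=1$.

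Independence of the moves comes from Lemma~\ref{lem:nooverlap} and Lemma~\ref{lem:nonacq}, which say that distinct pairs of the same height act in disjoint trapezia. Moves of different heights affect different rows, by Lemma~\ref{lem:nointerf}. Subadditivity in Corollary~\ref{codim} then gives $\codim\le\mathbf g$, i.e. $\dim\mathscr C_{\mathscr R}\ge\dim\mathfrak m-\mathbf g$.

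On the other side, Krull's theorem applied to the $\mathbf g$ generators bounds the codimension of every component of $\mathscr N$ by at most $\mathbf g$. This direction alone does not bound $\mathscr C_{\mathscr R}$ from above, so I get the reverse inequality from the stagewise strict inclusions $V_t\subsetneq V_{t-1}$. Each such inclusion drops dimension by at least one, so $\codim\ge\mathbf g$. Combining the two bounds gives equality.

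I expect the main obstacle to be the strictness $V_t\subsetneq V_{t-1}$, equivalently that the selected factor is nonzero on $V_{t-1}$. Here I would use Proposition~\ref{prop:nonvan}: before implementation there is no red entry in $R_s\cap\ell\mathcal T$, so $I_t|_{\mathfrak u_t}\neq0$. After implementation it vanishes, so $I_t$ is a nonzero function on $V_{t-1}$ that vanishes on $V_t$, which forces $V_t\neq V_{t-1}$ and hence a genuine drop in dimension.
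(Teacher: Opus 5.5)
Your containment argument is fine, and so is your bound $\dim\mathscr C_{\mathscr R}\le\dim\mathfrak m-\mathbf g$. The $V_t$ are irreducible and nested (Lemma~\ref{lem:stabexcl}). Each inclusion $V_t\subsetneq V_{t-1}$ is strict, because $I_t$ is nonzero on $\mathfrak u_t$ (Lemma~\ref{lem:nonacq} with Proposition~\ref{prop:nonvan}) but vanishes on $\mathfrak u_{t+1}$. The paper gives no proof of this proposition; it only cites \cite{FJ6}.

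The gap is the reverse inequality $\codim_{\mathfrak m}\mathscr C_{\mathscr R}\le\mathbf g$, which is the substantive half, and neither of your routes proves it. Saying that $V_t$ is a full component of $V_{t-1}\cap V(f)$, with $f$ the selected factor, just restates $\dim V_t\ge\dim V_{t-1}-1$. Krull bounds the codimension of the components of $V_{t-1}\cap V(f)$, but $V_t$ could be a proper closed subset of one. Lemma~\ref{lem:blackcount} counts black boxes, so it controls the degree of the restricted invariant, not $\dim\overline{B\cdot\mathfrak u_{t+1}}$. Nor does a step remove only the roots needed to kill one minor. In the paper's $(1,2,2,1)$ example with $(C_1,C_4)$ implemented first, one step excludes $x_{13},x_{25},x_{46}$ while the codimension rises by one. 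So everything depends on how many excluded directions the $B$-action gives back.

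Your second route misapplies Corollary~\ref{codim}. That corollary concerns moving the bottom $k$ boxes of a column of $\mathscr T$ under another column of $\mathscr T$, with the original heights in the formula. An implementation is in general not such a move:
\begin{itemize}
\item $C'''$ may have height $>s$, and then Step~2 shifts the lower parts of several columns. In the same example, $6$ is put under $4$ in a column of height $2>s=1$, where the formula gives $1\cdot[2-0]=2$, not $1$; $3$ and $5$ are shifted too.
\item The moves are not independent. The entry $5$ is moved by both implementations: first shifted under $2$, then recoloured with a black copy placed under $3$. So the disjointness hypothesis of the subadditivity clause fails. Lemmas~\ref{lem:nooverlap}, \ref{lem:nonacq} and~\ref{lem:nointerf} concern trapezia and red entries in $R_s$, not disjointness of moved boxes.
\item You would also need $\mathfrak u(\mathscr R)$ to be the intersection of the subspaces of the individual moves. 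This is unclear, since $X(\mathscr R)$ is read off from final positions.
\end{itemize}

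What is needed is an actual computation of $\dim\overline{B\cdot\mathfrak u}$. For instance, use the differential of $B\times\mathfrak u\to\mathfrak m$ at a generic $(1,\xi)$, and show that $\mathfrak u+[\mathfrak b,\xi]$ has codimension exactly $\mathbf g$. Equivalently, all but $\mathbf g$ excluded root directions must be recovered modulo $\mathfrak u$. In the example, $[x_{23},\xi]$ and $[x_{45},\xi]$ span, modulo $\mathfrak u$, a plane inside the span of $x_{13},x_{25},x_{46}$. That is exactly why that step costs codimension $1$.
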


\begin{proposition}[]\label{prop_irreducibility}
For every complete reverse tableau $\mathscr R$, the variety
$\mathscr C_{\mathscr R}$ is irreducible.
In particular, $\mathscr C_{\mathscr R}\in\Irr(\mathscr N)$.
\end{proposition}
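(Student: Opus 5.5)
The statement to prove is Proposition~\ref{prop_irreducibility}: for a complete reverse tableau $\mathscr R$, the variety $\mathscr C_{\mathscr R}$ is irreducible, and hence lies in $\Irr(\mathscr N)$.

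The plan is to read $\mathscr C_{\mathscr R}$ as $\overline{B\cdot\mathfrak u}$, where $\mathfrak u=\mathfrak u(\mathscr R)$ is the span of the non-excluded root vectors. Irreducibility then comes from a covering argument, and membership in $\Irr(\mathscr N)$ from a dimension count.

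\textbf{Step 1: irreducibility.} Consider the action map
\[
\mu : B\times \mathfrak u \longrightarrow \mathfrak m,\qquad (b,x)\mapsto b\cdot x .
\]
Here $B$ is a connected algebraic group, hence irreducible. The space $\mathfrak u$ is a linear subspace, hence irreducible. So $B\times\mathfrak u$ is irreducible. The image of an irreducible variety under a morphism is irreducible, and so is its Zariski closure. Hence $\overline{B\cdot\mathfrak u}=\overline{\mu(B\times\mathfrak u)}$ is irreducible. This covering is all that irreducibility itself requires.

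\textbf{Step 2: containment in $\mathscr N$.} By Theorem~\ref{thm:vanish}, every BS invariant vanishes on $\mathfrak u$. These invariants are $B$-semi-invariant, so they also vanish on $B\cdot\mathfrak u$ and on its closure. Since the BS invariants generate $\mathscr I_+$, this gives $\mathscr C_{\mathscr R}\subset\mathscr N$.

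\textbf{Step 3: maximality.} It remains to show that $\mathscr C_{\mathscr R}$ is not properly contained in another irreducible closed subset of $\mathscr N$. I plan to do this by comparing dimensions. By Proposition~\ref{prop_dim}, $\dim\mathscr C_{\mathscr R}=\dim\mathfrak m-\mathbf g$. On the other hand, $\mathscr N$ is cut out by $\mathbf g$ equations, so Krull's principal ideal theorem shows that every component of $\mathscr N$ has dimension at least $\dim\mathfrak m-\mathbf g$.

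Suppose $\mathscr C_{\mathscr R}\subset\mathscr C'$ for some component $\mathscr C'$ of $\mathscr N$. If $\dim\mathscr C'=\dim\mathfrak m-\mathbf g$, then $\mathscr C'$ is irreducible, closed, and of the same dimension as the closed irreducible subset $\mathscr C_{\mathscr R}$. This forces equality.

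\textbf{Main obstacle: the upper bound on $\dim\mathscr C'$.} The difficulty is ruling out $\dim\mathscr C'>\dim\mathfrak m-\mathbf g$. My first approach is to use that $\mathscr I$ is a polynomial algebra whose generators are homogeneous of positive degree, together with the existence of a Weierstrass section recalled from \cite{FJ6}. These facts make the quotient map $\mathfrak m\to\mathfrak m/\!/P'$ equidimensional at the zero fibre. Then every component of $\mathscr N$ has dimension exactly $\dim\mathfrak m-\mathbf g$.

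If that equidimensionality is not available, I will fall back on the step-by-step codimension count of Corollary~\ref{codim} along the implementation sequence. That count is where the real work lies.
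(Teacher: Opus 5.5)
Your Steps~1 and~2 are correct and agree with the paper. Irreducibility of $\mathscr C_{\mathscr R}=\overline{B\cdot\mathfrak u(\mathscr R)}$ is the ``covering'' argument the paper alludes to: it is the closure of the image of the irreducible variety $B\times\mathfrak u$ under the action map. Containment in $\mathscr N$ comes from Theorem~\ref{thm:vanish} and semi-invariance, and is also part of Proposition~\ref{prop_dim}. Step~3 follows the paper's route too: compare $\dim\mathscr C_{\mathscr R}=\dim\mathfrak m-\mathbf g$ with the dimensions of the components of $\mathscr N$. However, the one input that makes this comparison work is left open, and neither of your proposed ways of supplying it succeeds.

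What you need is an upper bound: every irreducible component of $\mathscr N$ has dimension at most $\dim\mathfrak m-\mathbf g$, i.e.\ $\codim_{\mathfrak m}\mathscr N=\mathbf g$. The Krull lower bound you quote plays no role here. Without the upper bound, an irreducible closed subset of $\mathscr N$ of the ``expected'' dimension need not be a component.

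Polynomiality on homogeneous generators plus a Weierstrass section does not give this bound. Let $\mathbb C^*$ act on $\mathbb C^3$ by $t\cdot(x,y,z)=(tx,ty,t^{-1}z)$.
\begin{itemize}
\item The invariant algebra is the polynomial ring $\mathbb C[xz,yz]$, with homogeneous generators of positive degree.
\item $e+V$ with $e=(0,0,1)$ and $V=\mathbb C^2\times\{0\}$ is a Weierstrass section.
\item Yet the zero fibre is $\{z=0\}\cup\{x=y=0\}$, which has a component of dimension $2>3-2$.
\item The line $\{y=z=0\}$ is irreducible of dimension $1$ inside the zero fibre without being a component.
\end{itemize}

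Your fallback does not address the issue either. Corollary~\ref{codim} and Proposition~\ref{prop_dim} compute codimensions of the particular varieties $\overline{B\cdot\mathfrak u}$. They say nothing about an unknown component $\mathscr C'\supset\mathscr C_{\mathscr R}$.

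In the paper this bound is not proved inside the argument. It enters as the standing fact $\codim_{\mathfrak m}\mathscr N=\mathbf g$, stated in Section~\ref{sec:Krull} and assumed in Lemma~\ref{lem:OV}. You should invoke that fact explicitly. Be aware that it is genuine geometric input: knowing that each $I_j$ is non-zero on the previous intersection, as in Section~\ref{sec:Krull}, does not by itself prevent $V(I_j)$ from containing some irreducible component of that intersection. Once you have it, $\mathscr C_{\mathscr R}\subset\mathscr C'$ and $\dim\mathscr C'\le\dim\mathscr C_{\mathscr R}$ give $\mathscr C_{\mathscr R}=\mathscr C'$, exactly as in your Step~3.
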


%==========================================================================
\section{Codimension and Krull's Theorem}\label{sec:Krull}
%==========================================================================
The surjectivity argument requires precise control of codimensions.
Krull's principal ideal theorem provides the general codimension bound, and the
reverse-tableau construction shows that this bound is attained. This dimension equality is
what allows the constructed varieties to be recognised as irreducible components of the
nilfibre.
\subsection{Krull's principal ideal theorem}

We recall the algebraic tool underlying the codimension count; see
\cite{AM,Eis,Mat} for proofs.

Let $k$ be a field (algebraic closedness is not needed for the argument) \cite{AM}, \cite{Eis}, and let
$X$ be an irreducible affine $k$-variety of dimension $n$.
Set $A:=k[X]$. Then $A$ is a Noetherian integral domain and
\[
\dim A=\dim X=n.
\]

\subsection{Algebraic translation}
Let $D_1,\dots,D_g\subset X$ be hypersurfaces. Choose nonzero functions $f_i\in A$ such that
\[
D_i = V(f_i)\subset X,
\qquad
I:=(f_1,\dots,f_g)\subset A.
\]
Then
\[
D_1\cap\cdots\cap D_g \;=\; V(I),
\qquad
k[V(I)] \cong A/I,
\qquad
\dim\bigl(D_1\cap\cdots\cap D_g\bigr)=\dim(A/I).
\]
Since $\codim_X(Y)=\dim X-\dim Y$ for any irreducible closed subset $Y\subset X$, the desired inequality
\[
\codim_X\bigl(D_1\cap\cdots\cap D_g\bigr)\le g
\]
is equivalent to
\[
\dim(A/I)\ge n-g.
\]

\subsection{Krull's Principal Ideal Theorem}
\begin{theorem}[Krull, {\cite[Thm.\,11.14]{AM}}]\label{thm:Krull}
Let $R$ be a Noetherian ring and let $f\in R$.
If $\mathfrak q$ is a prime ideal minimal over $(f)$, then $ht(\mathfrak q)\le 1$.
Equivalently, every irreducible component of $V(f)\subset Spec(R)$ has codimension at most $1$.

%
%Let $A$ be a Noetherian ring with $\dim R=n$, and 
%let
%\[
%I=(f_1,\dots,f_g)\subset A .
%\]
%If $\mathfrak q$ is a prime ideal minimal over $I$, then
%\[
%\mathrm{ht}(\mathfrak q)\le g .
%\]
%Equivalently,
%\[
%\dim(A/I)\ge n-g .
%\]
%
%Geometrically, if $X=\mathrm{Spec}(A)$ and $D_i=V(f_i)$, then
%\[
%\codim_X(D_1\cap\cdots\cap D_g)\le g .
%\]
\end{theorem}

\medskip

Define $I_j:=(f_1,\dots,f_j)$ and $A_j:=A/I_j$ for $j=0,1,\dots,g$ (so $A_0=A$ and $A_g=A/I$).

{Base case ($g=1$).}
Applying Krull's theorem to $R=A$ and $f=f_1$ gives
\[
\dim(A_1)=\dim\bigl(A/(f_1)\bigr)\ge \dim(A)-1 = n-1.
\]

\paragraph{Inductive step.}
Assume $\dim(A_{g-1})\ge n-(g-1)$.
In the ring $A_{g-1}$, let $\overline f_g$ be the image of $f_g$.
Applying Krull's theorem to $R=A_{g-1}$ and $f=\overline f_g$ yields
\[
\dim(A_g)
=
\dim\bigl(A_{g-1}/(\overline f_g)\bigr)
\ge
\dim(A_{g-1})-1
\ge
\bigl(n-(g-1)\bigr)-1
=
n-g.
\]
Hence $\dim(A/I)=\dim(A_g)\ge n-g$, and therefore
$
%\boxed
{\;\codim_X\bigl(D_1\cap\cdots\cap D_g\bigr)\le g.\;}
$

\subsection{Equality case}
Equality $\codim_X(D_1\cap\cdots\cap D_g)=g$ holds precisely when the dimension drops by
exactly $1$ at each step, i.e.
\[
\dim(A_j)=\dim(A_{j-1})-1 \quad \text{for all } j=1,\dots,g.
\]
A standard sufficient (and in many settings equivalent) condition is that
$f_1,\dots,f_g$ form a \emph{regular sequence} in $A$, meaning that for each $j$ the class
$\overline f_j$ is not a zero divisor in $A_{j-1}=A/(f_1,\dots,f_{j-1})$.
Geometrically, this says that $D_j$ does not contain any irreducible component of
$D_1\cap\cdots\cap D_{j-1}$, so each hypersurface cuts the previous intersection properly.
\\
\medskip

In our setting, we are precisely in this situation, the invariant ideal $\mathscr I_{+}$ is
generated by the family of invariants $\{\,I_{C_i,C'_i}\,\}_i$, and where 
\[
\mathscr N := V(\mathscr I_{+}) \;=\; V\bigl(I_{C_1,C'_1},\dots,I_{C_g,C'_g}\bigr)\subset X.
\]
Moreover, for each $j=1,\dots,g$, the restriction of $I_{C_j,C'_j}$ to the previous
intersection
\[
X_{j-1}:=D_1\cap\cdots\cap D_{j-1}
\]
is not identically zero, as seen in section \ref{ RT} i.e.
\[
I_{C_j,C'_j}\big|_{X_{j-1}}\neq 0.
\]
Equivalently, $D_j=V(I_{C_j,C'_j})$ does not contain any irreducible component of $X_{j-1}$.
Consequently, each equation cuts the dimension by exactly one at step $j$, and we obtain
\[
\codim_X(\mathscr N)=g,
\]
namely the codimension of $\mathscr N$ equals the number of generating invariants.
We have seen that $\mathscr N$ need not be irreducible.
\begin{lemma}\label{lem:push-b-kills-invariant}
Let $(C,C')$ be a pair of \emph{neighbouring} columns of height $s$, and assume that $(C,C')$ is
free at stage $t$, i.e.\ $I^s_{C,C'}\big|_{\mathfrak u(\mathscr R_t)}\neq 0$.
Let $b$ be a black box (or a black subcolumn) contained in the interval $]C,C']$ and lying in a row
$R_i$ with $i<s$. Form a new tableau by moving $b$ to the left and pushing it down into some row
$R_j$ with $j>s$, and let $\mathfrak u_b\subset \mathfrak m$ be the subspace associated with this
new tableau (via the moving procedure).
Then
\[
I^s_{C,C'}\big|_{\mathfrak u_b}=0,
\qquad\text{and consequently}\qquad
\codim_{\mathfrak m}\,\overline{B\cdot \mathfrak u_b}\ge 1.
\]
\end{lemma}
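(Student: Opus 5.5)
The plan is to use the combinatorial description of the monomials of $I^s_{C,C'}$ as systems of pairwise disjoint composite right-going lines covering the rectangle (or, at stage $t$, the trapezium) $R^s\cap[B,B']$. The goal is to show that, after $b$ is moved, no such system survives in $\mathfrak u_b$. The codimension statement then follows at once. Since $I^s_{C,C'}\big|_{\mathfrak u(\mathscr R_t)}\neq 0$, the invariant is not identically zero on $\mathfrak m$. If it vanishes on $\mathfrak u_b$, then by $B$-semi-invariance it vanishes on $\overline{B\cdot\mathfrak u_b}$. Hence $\overline{B\cdot\mathfrak u_b}$ is contained in the hypersurface $V(I^s_{C,C'})$, which is a proper closed subset of the irreducible space $\mathfrak m$, and this gives codimension $\ge 1$.

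For the vanishing, let $k$ be the numerical value of $b$. In $\mathscr R_t$ the box $b$ lies in $R_i$ with $i<s$, strictly inside $]C,C']$, so by Lemma~\ref{lem:nointerf}(c) and Lemma~\ref{lem:blackcount} it is one of the $d$ black entries of the left trapezium. Every monomial of $I^s_{C,C'}$ is a product over a line system that passes through the box containing $k$ exactly once. This forces a factor $x_{p,k}$, where $p$ lies in a column strictly to the left of the column of $k$, and (unless $k\in C'$) a factor $x_{k,q}$, where $q$ lies strictly to the right. After the move, the black $k$ sits in a row $R_j$ with $j>s$, to the left of its former column. I would then check, using Definition~\ref{def:excl}, which coordinates are excluded in the new tableau:
\begin{enumerate}
\item[(1)] every $p$ that lay above $k$ in its old column, or to its right, is now above or to the right of the new black $k$, so $x_{p,k}$ is excluded for those $p$;
\item[(2)] the new position of $k$ lies outside $R^s$, so no composite right-going line confined to $R^s$ can reach it.
\end{enumerate}

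The cleanest way to organise this is through the black count. Apply Lemma~\ref{lem:blackcount} to the modified tableau: the black entries in $R^s\cap\,]B,B']$ drop from $d$ to $d-1$, because $b$ has left $R^s$ and nothing has entered. A nonzero monomial needs $d$ distinct admissible coordinate vectors, one per black box of the left trapezium, as in Lemma~\ref{lem:degree}. I would then invoke Proposition~\ref{prop:nonvan}, or rather its underlying counting argument: with fewer than $d$ black boxes available, every line system contains a factor that is excluded or absent. Hence each monomial restricts to zero on $\mathfrak u_b$.

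The main obstacle is the last step. I must justify that a monomial using $x_{p,k}$ with $p$ to the left of the new, lower position of $k$ really is killed, i.e.\ that such $x_{p,k}$ cannot rescue the monomial. The plan is to argue by multilinearity. Each monomial contains exactly one factor with second index $k$, coming from the row-$i$ line entering $b$. Because $I^s_{C,C'}$ is a lowest-degree coefficient of a minor in which the rows $R^s$ of the columns of $]C,C']$ are used, a variable $x_{p,k}$ contributes only through a path staying within $R^s$. Once $k$ has been pushed below $R_s$, the corresponding row/column index falls outside the minor's support in the highest-degree term, and the term vanishes. I expect to need to verify this carefully against the Benlolo--Sanderson construction, and I would check it first on the example of Figure~\ref{fig:ex-123312-first-steps}.
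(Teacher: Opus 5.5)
Your codimension deduction is correct, and more explicit than the paper's. The vanishing on $\mathfrak u_b$, however, is not established, and the mechanisms you propose for it do not work.

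\emph{Your item (1) produces no new zeros.} The coordinates it lists have $p$ above or to the right of the \emph{old} position of $k$, so they were already excluded in $\mathscr R_t$. They therefore vanish on $\mathfrak u(\mathscr R_t)$, where $I^s_{C,C'}\neq 0$ by freeness, so they cannot cause any new vanishing. The coordinates the move actually kills are the $x_{p,k}$ with $p$ lying, in the new tableau, above $k$ in its new column $D$ or in a column strictly between $D$ and the old column $E$ of $b$. These are entries that used to be to the \emph{left} of $k$, and they are the ``new zero entries of $X[I,J]$'' on which the paper's proof rests.

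\emph{The black-count route is not available.} Lemma~\ref{lem:blackcount} and Proposition~\ref{prop:nonvan} concern the tableaux $\mathscr R^j$ produced by implementing pairs. There the count is always $d$, and vanishing is signalled by a red entry, which your move does not create. In any case, the restriction of a fixed polynomial to $\mathfrak u_b$ depends only on which coordinates vanish there, not on how many black boxes lie in a region.

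\emph{Your fallback in the last paragraph is false.} The minor defining $I^s_{C,C'}$ is indexed by numerical values, so moving $k$ does not remove it from the support. A monomial containing $x_{p,k}$ with $p$ strictly left of $D$ is \emph{not} killed on $\mathfrak u_b$. Item (2) and the claim that every monomial passes through the box of $k$ are also wrong. Lines are not confined to $R^s$: in the example $(1,2,2,1)$, the monomial $x_{13}x_{35}x_{56}$ of $I^1_{C_1,C_4}$ runs through $R_2$. And in a column of height $>s$, a given box need not be visited.

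\textbf{What is missing is the shape of the monomials.} $I^s_{C,C'}$ is the coefficient of the lowest power of $c$ in the minor of $c\,\mathrm{Id}+X$ with rows the entries of $[C,C'[$ and columns those of $]C,C']$.
\begin{enumerate}
\item The off-diagonal part of each contributing permutation is a system of $s$ disjoint right-going lines from $C$ to $C'$.
\item Minimality of the $c$-power forces every line to pass through each intermediate column of height $>s$, and forces the lines to cover every entry of each intermediate column of height $<s$. This also recovers $d=|\ell R^s_{C,C'}|$.
\item For $b$ to land entirely below $R_s$, it must be placed under a column $D$ of height $\ge s$. So $D$ is $C$, lies left of $C$, or lies in $]C,E[$ with height $>s$; height $s$ is excluded since $(C,C')$ is neighbouring.
\item Every line system visits some entry $k$ of $b$. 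If $\operatorname{ht}(E)\le s$, all entries of $E$ are visited. If $\operatorname{ht}(E)>s$, exactly $s$ entries are visited while only $i-1<s$ lie above $b$.
\item The factor $x_{p,k}$ by which a line enters $k$ has $p$ in $[C,E[$. If $D\in\,]C,E[$, the line has already passed through $D$, so $p\in[D,E[$. If $D$ is $C$ or lies left of $C$, then $[C,E[\,\subset[D,E[$.
\end{enumerate}
Either way $x_{p,k}$ is newly excluded, so every monomial dies on $\mathfrak u_b$.

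This is precisely the justification the paper's step ``new zero entries, hence the top-degree minor vanishes'' needs but does not spell out. As written, the argument treats a move performed on $\mathscr T$. At stage $t$ the same bookkeeping has to be carried out inside $\mathscr R_t$.
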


\begin{proof}
By Proposition~\ref{prop:free-pair-factorisation}, the restriction of \(I^s_{C,C'}\) is given by the highest-degree minor of a submatrix \(X[I,J]\). In particular, each black entry in \(]C,C']\cap R^s\) contributes a matrix entry to \(X[I,J]\), whose row and column are indexed by that black entry.

Now let \(b\) lie in row \(R_i\) with \(i<s\). Move \(b\) down to a row \(R_j\) with \(j>s\), while shifting it to the left, and let \(\mathscr T_b\) be the resulting tableau and \(\mathfrak u_b\) the corresponding subspace. Upon restricting \(X[I,J]\) to \(\mathfrak u_b\)%, the unique entry contributed by \(b\) in the relevant row--column position disappears
. More precisely, if \(S\) denotes the set of entries displaced by \(b\) in \(\mathscr T_b\), then for every \(c\in S\) the corresponding coefficient vanishes:
$
x_{kc}=0 \qquad \text{whenever } k<c
$
and the entry \(k\) lies in \(\mathscr T_b\) either above \(c\) or to its right. It follows that theses new zero entries in the submatrix \(X[I,J]\) and its  highest-degree minor vanishes. Hence
$
I^s_{C,C'}\big|_{\mathfrak u_b}=0.
$ and therefore also on \(\overline{B\cdot \mathfrak u_b}\). Hence
\[
\overline{B\cdot \mathfrak u_b}\subset \mathcal Z(I^s_{C,C'})\subset \mathfrak m,
\]
where \(\mathcal Z(I^s_{C,C'})\) denotes the zero locus of \(I^s_{C,C'}\).
\end{proof}

The zero set of a semi-invariant is closely related to orbital varieties. An orbital varieties admit a tableau description: they are obtained by lowering a box (or a subcolumn) within the tableau. Since each orbital variety is represented by such a tableau, its codimension is determined by Corollary~\ref{codim}.

%\begin{lemma}
%An orbital variaty \mathcal O$ of codim s \le g contain a component of the nilfibre  \mathscr N, if and only if s invariant vanishes on  \mathcal O
%\end{lemma}
\medskip 

The following lemma \ref{lem:OV} connects the abstract minimal-prime problem with orbital varieties.  It tells us when an orbital-variety closure contains a component of the nilfibre, and therefore allows the surjectivity proof to pass between algebraic ideals and tableau moves.
\medskip 
To simplify notation in the following lemma, we write $I_i$ for $I_{C_i,C'_i}$.
\begin{lemma}\label{lem:OV}
Let $X=\overline{\mathcal O}\subset \mathfrak m$ be the closure of an orbital variety, 
$X$ is irreducible with $\codim_{\mathfrak m}(X)=s\le g$. Let $\mathscr N\subset \mathfrak m$ be the
$
\mathscr I_+=<I_1,\dots,I_g>_+\subset \mathbb C[\mathfrak m],
$
with $\codim_{\mathfrak m}(\mathscr N)=g$.
Set $A:=\mathbb C[X]=\mathbb C[\mathfrak m]/I(X)$ and write $\overline I_i$ for the image of $I_i$ in $A$.

Then $X$ contains an irreducible component of $\mathscr N$ if and only if the ideal
$(\overline I_1,\dots,\overline I_g)\subset A$ has a minimal prime of height $g-s$.
Equivalently, $X\cap \mathscr N$ has an irreducible component of codimension $g-s$ inside $X$.

In particular, this holds whenever there exist indices $i_1,\dots,i_{g-s}$ such that
$\overline I_{i_1},\dots,\overline I_{i_{g-s}}$ cut $X$ in codimension $g-s$ %(for instance, they forma regular sequence at the generic point of that component)
.
\end{lemma}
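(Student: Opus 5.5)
The plan is to translate everything into the dimension theory of the affine domain $A=\mathbb C[X]$ and compare the irreducible components of $X\cap\mathscr N$ with those of $\mathscr N$. Since $X$ is closed in $\mathfrak m$, one has $X\cap\mathscr N=V_X(\overline I_1,\dots,\overline I_g)$. Hence the irreducible components of $X\cap\mathscr N$ correspond bijectively to the minimal primes $\mathfrak q$ of $(\overline I_1,\dots,\overline I_g)\subset A$.

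\textbf{Step 1: heights versus codimensions.} Because $A$ is a finitely generated $\mathbb C$-domain, it is catenary and satisfies $\height(\mathfrak q)+\dim A/\mathfrak q=\dim A=\dim\mathfrak m-s$. Thus a minimal prime of height $h$ corresponds to a component $Z$ of $X\cap\mathscr N$ with $\codim_X Z=h$ and $\codim_{\mathfrak m}Z=s+h$. This already gives the stated equivalence between the two formulations: a minimal prime of height $g-s$ is the same thing as a component of $X\cap\mathscr N$ of codimension $g-s$ in $X$, that is, of codimension $g$ in $\mathfrak m$.

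\textbf{Step 2: the key input.} I will read the hypothesis $\codim_{\mathfrak m}\mathscr N=g$ in the strong form established in the equality-case discussion above: every irreducible component of $\mathscr N$ has codimension exactly $g$. Krull (Theorem~\ref{thm:Krull}) gives $\le g$, and the regular-sequence argument gives equality. Consequently every irreducible closed subset of $\mathscr N$ has codimension $\ge g$ in $\mathfrak m$, so every component of $X\cap\mathscr N$ has codimension $\ge g-s$ in $X$.

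\textbf{($\Rightarrow$)} Suppose $X$ contains a component $Z$ of $\mathscr N$. Then $Z\subset X\cap\mathscr N\subset\mathscr N$, and since $Z$ is maximal among irreducible closed subsets of $\mathscr N$, it is also a component of $X\cap\mathscr N$. Its codimension in $\mathfrak m$ is $g$, so by Step 1 its prime has height $g-s$.

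\textbf{($\Leftarrow$)} Let $Z$ be a component of $X\cap\mathscr N$ with $\codim_{\mathfrak m}Z=g$, and choose a component $Z'$ of $\mathscr N$ with $Z\subset Z'$. By purity, $\dim Z'=\dim\mathfrak m-g=\dim Z$. Since both are irreducible, $Z=Z'$, and therefore $Z'\subset X$.

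\textbf{The ``in particular''.} Suppose $\overline I_{i_1},\dots,\overline I_{i_{g-s}}$ cut out a component $W$ of codimension $g-s$ in $X$. I would add the implicit requirement that the remaining invariants vanish on $W$, so that $W\subset\mathscr N$; this is what happens in the tableau situations. Then $W\subset X\cap\mathscr N\subset V_X(\overline I_{i_1},\dots,\overline I_{i_{g-s}})$, and $W$ is a component of the largest of these sets, so it is a component of $X\cap\mathscr N$ of the right codimension. Alternatively, Step 2 shows that codimension $g-s$ is the minimum possible, which forces the same conclusion.

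\textbf{Main obstacle.} The delicate point is Step 2, the equidimensionality of $\mathscr N$. Krull's theorem alone only bounds codimensions from above. If $\mathscr N$ had a component of codimension $<g$, then ($\Leftarrow$) could fail, because $Z$ might sit strictly inside a larger component. So the proof must invoke the regular-sequence/equality statement derived just before the lemma, rather than the bare inequality $\codim\le g$.
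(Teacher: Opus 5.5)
Your proof is correct and is essentially the paper's argument. Both directions are height/codimension bookkeeping in the affine domain $A$ combined with purity of $\mathscr N$; the paper phrases $(\Leftarrow)$ as minimality over $\mathscr I_+$ of a height-$g$ prime, where you compare dimensions with a containing component. You do not need the paper's regular-sequence discussion for purity: $\codim_{\mathfrak m}\mathscr N=g$ already says every component has codimension $\ge g$, and Krull's height theorem gives $\le g$.

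The paper's proof does not address the ``in particular'' clause, and your extra requirement that the remaining invariants vanish on $W$ is genuinely needed. For the composition $(1,2,2,1)$, the $B$-stable subspace $V(x_{46},x_{56})$ obtained by lowering $6$ under $5$ has codimension $2=g$, so the clause holds vacuously. Yet $I^2_{C_2,C_3}$ does not vanish on it, so it contains no component of $\mathscr N$.
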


\begin{proof}
Let $\mathfrak q:=I(X)\subset \mathbb C[\mathfrak m]$, so $A=\mathbb C[\mathfrak m]/\mathfrak q$ and $ht(\mathfrak q)=s$.
The scheme-theoretic intersection $X\cap \mathscr N$ is defined in $A$ by the ideal
$(\overline I_1,\dots,\overline I_g)$. \cite{Har}

\smallskip
\emph{($\Rightarrow$)} Suppose $X$ contains an irreducible component $\mathscr C$ of $\mathscr N$.
Let $\mathfrak  p:=I(\mathscr C)$, so $\mathfrak  p$ is a minimal prime over $\mathscr I_+$ and $\mathscr C=V(\mathfrak p)$.
The inclusion $\mathscr C\subset X$ is equivalent to $\mathfrak q\subset \mathfrak p$. Passing to $A$, the prime
$\mathfrak p/\mathfrak q\subset A$ contains $(\overline I_1,\dots,\overline I_g)$ and is minimal over it.
Moreover, since \(R=\mathbb C[\mathfrak m]\) is a polynomial ring (catenary), we have
\[
ht(\mathfrak p/\mathfrak q)=ht(\mathfrak p)-ht(\mathfrak q).
\]
\[
ht(\mathfrak p/\mathfrak q)=ht(\mathfrak p)-ht(\mathfrak q)=g-s,
\]
since $ht(\mathfrak  p)=\codim_{\mathfrak m}(\mathscr C)=\codim_{\mathfrak m}(\mathscr N)=g$ and $ht(\mathfrak q)=s$.
Thus $(\overline I_1,\dots,\overline I_g)$ has a minimal prime of height $g-s$.

\smallskip
\emph{($\Leftarrow$)} Conversely, assume $(\overline I_1,\dots,\overline I_g)\subset A$ has a minimal
prime $\bar{ \mathfrak p}$ of height $g-s$. Let $\mathfrak p\subset \mathbb C[\mathfrak m]$ be its preimage. Then
$\mathfrak q\subset \mathfrak p$ and $\mathscr I_+\subset \mathfrak p$. Moreover,
\[
ht(\mathfrak p)=ht(\mathfrak q)+ht(\bar {\mathfrak p})=s+(g-s)=g.
\]
Since $g$ is the minimal possible height of a prime containing $\mathscr I_+$ (because
$\codim_{\mathfrak m}\mathscr N=g$), it follows that $\mathfrak  p$ is a minimal prime over $\mathscr I_+$.
Hence $V(\mathfrak p)$ is an irreducible component of $\mathscr N$, and it is contained in $V(\mathfrak  q)=X$.
\end{proof}

\begin{lemma}\label{lem:partial-RT-orbital}
Let $\mathscr R_t$ be a partial reverse tableau obtained after $t$ implementations.
Then $\mathscr R_t$ is associated with an orbital variety (equivalently,
$\overline{B\cdot \mathfrak u(\mathscr R_t)}$ is an orbital variety) if and only if, at every step
$r\le t$, no move sends a black entry one row downward and leftward % and to the left into a box lying strictly
below a red entry of $\mathscr R_t$.
\end{lemma}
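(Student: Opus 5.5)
The plan is to use the characterisation of orbital varieties in $\mathfrak m$ by tableaux: $\overline{B\cdot\mathfrak u}$ is an orbital variety exactly when $\mathfrak u$ is the nilradical of a standard (multiplicity-free) tableau obtained from $\mathscr T$ by lowering boxes or subcolumns, as recalled before Lemma~\ref{lem:OV} and in \cite[4.1.3--4.1.4]{FJ5}. In that case $\mathfrak u$ is $\mathfrak b\cap w(\mathfrak n)$-type, i.e.\ its excluded set is closed under the relevant bracket relations, and Corollary~\ref{codim} gives its codimension. I would therefore compare $X(\mathscr R_t)$ with the excluded set of the \emph{black tableau} $\mathscr R_t^{\mathrm{bl}}$ obtained by deleting all red entries. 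By Lemma~\ref{lem:structure}(ii), $\mathscr R_t^{\mathrm{bl}}$ contains each value exactly once, and by Lemma~\ref{lem:structure}(i) it is standard as long as removing reds creates no gaps in columns.

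For the ``if'' direction, I would argue by induction on $t$ using Lemma~\ref{lem:stabexcl}. Suppose that no move has placed a black entry directly below a red entry. Then every red entry sits in a box with no black entry beneath it in its column, so deleting the reds leaves columns without gaps. In that case $\mathscr R_t^{\mathrm{bl}}$ is a genuine tableau arising from $\mathscr T$ by successive lowerings of subcolumns, namely the shifts of Step 2 together with the substitutions of Step 1. Next I would check that $X(\mathscr R_t)=X(\mathscr R_t^{\mathrm{bl}})$. A red copy of $k$ lies to the right of and above the black $k$ (reverse $k$-string), so the ``rightmost occurrence'' convention in Definition~\ref{def:excl} could in principle add exclusions. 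I would show that, under the hypothesis, any $x_{k,j}$ excluded because of a red $k$ is already excluded because of the black $k$, or is forced by the string being one row per step. Hence $\mathfrak u(\mathscr R_t)$ is the subspace of an ordinary lowered tableau, and its $B$-saturation is an orbital variety.

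For the ``only if'' direction, suppose that at some step $r\le t$ a black $j$ was placed in a box strictly below a red entry $a$. Rows increase left to right, and the red $a$ sits above the black $j$ in the same column. By Definition~\ref{def:excl} and Lemma~\ref{lem:stabexcl}, the rightmost $a$ then forces $x_{a,j}$ (or the appropriate pair) to be excluded in $\mathscr R_t$. I would show that this excluded set cannot arise from any multiplicity-free standard tableau. The resulting pattern of included and excluded roots violates the closure property satisfied by excluded sets of orbital-variety tableaux: one can exhibit $x_{a,c},x_{c,j}$ with $x_{a,c},x_{c,j}\in\mathfrak u$ but $x_{a,j}\notin\mathfrak u$. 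Equivalently, $\mathfrak u(\mathscr R_t)$ is not a subalgebra. Alternatively, the codimension of $\overline{B\cdot\mathfrak u(\mathscr R_t)}$ computed from Proposition~\ref{prop_dim}-type counts differs from the value Corollary~\ref{codim} would give for the black tableau.

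The main obstacle is this last step: showing that a black entry placed under a red one genuinely destroys the orbital-variety property rather than yielding a different tableau with the same excluded set. I would try first to produce the explicit triple $a<c<j$ witnessing non-closure under the bracket, taking $c$ to be the black entry immediately to the left of the red $a$ in its row. I would fall back on a codimension comparison if the triple argument fails in degenerate cases where shifting skips columns of height less than $s$.
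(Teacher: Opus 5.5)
The paper states this lemma without proof, so your argument has to stand on its own. It has a genuine gap in the ``only if'' direction, and the gap is more basic than the degenerate cases you anticipate: the witness you plan to exhibit cannot exist. Write $\rho(i)$ for the column of the rightmost occurrence of $i$ and $\beta(j)$ for the column of the black $j$. Since $\mathscr R_t$ is standard with multiplicities, Definition~\ref{def:excl} says that for $x_{i,j}\in\mathfrak m$ one has $x_{i,j}\in X(\mathscr R_t)$ if and only if $\rho(i)\ge\beta(j)$. By Lemma~\ref{lem:structure}(ii) the black entry is the leftmost of its string, so $\beta(c)\le\rho(c)$. Hence $x_{a,c},x_{c,j}\in\mathfrak u(\mathscr R_t)$ gives $\rho(a)<\beta(c)\le\rho(c)<\beta(j)$, and therefore $x_{a,j}\in\mathfrak u(\mathscr R_t)$. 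So $\mathfrak u(\mathscr R_t)$ is \emph{always} a subalgebra, whatever moves were made.

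What a black entry under a red one can break is closedness of the \emph{complement}. If the black $j$ lies under a red $c$ that is the rightmost $c$, then $\beta(j)=\rho(c)$. As soon as some $a<c$ satisfies $\beta(c)\le\rho(a)<\beta(j)$, the roots $x_{a,c},x_{c,j}$ lie outside $\mathfrak u(\mathscr R_t)$ while $x_{a,j}$ lies inside, so $\mathfrak u(\mathscr R_t)$ is not of the form $\mathfrak n\cap w(\mathfrak n)$.

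Even this corrected observation does not prove ``only if'', because being an orbital variety is a property of $\overline{B\cdot\mathfrak u}$, not of $\mathfrak u$. Different root subspaces can have the same $B$-saturation closure, and neither being a subalgebra nor being of the form $\mathfrak n\cap w(\mathfrak n)$ is an invariant of that closure. Already for the composition $(1,1,1)$, $\overline{B\cdot\langle x_{12},x_{23}\rangle}=\mathfrak n$ is an orbital variety although $\langle x_{12},x_{23}\rangle$ is not even a subalgebra. So the characterisation you start from is only available in the direction ``tableau subspace $\Rightarrow$ orbital variety'', which is of no use for ``only if''.

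Your codimension fallback has the same defect. Distinguishing $\overline{B\cdot\mathfrak u(\mathscr R_t)}$ from $\overline{B\cdot\mathfrak u(\mathscr R_t^{\mathrm{bl}})}$ rules out one orbital variety, not all of them; moreover, Proposition~\ref{prop_dim} concerns complete tableaux only. What is needed is an intrinsic test, for instance via Spaltenstein's equidimensionality. An irreducible $B$-stable closed $V\subset\mathfrak n$ whose generic element lies in the nilpotent orbit $\mathcal O$ is an orbital variety closure if and only if $\dim V=\tfrac12\dim\mathcal O$. You would have to compute the generic Jordan type on $\mathfrak u(\mathscr R_t)$ and $\dim\overline{B\cdot\mathfrak u(\mathscr R_t)}$, and show that this equality fails in the black-under-red case.

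The ``if'' direction also rests on an unproved step: the identity $X(\mathscr R_t)=X(\mathscr R_t^{\mathrm{bl}})$ does not follow from your hypothesis. The rightmost-occurrence convention exists precisely to keep $x_{k,l}$ excluded after the black $k$ has moved left (see the proof of Lemma~\ref{lem:stabexcl}). Concretely, $X(\mathscr R_t)\setminus X(\mathscr R_t^{\mathrm{bl}})$ consists of the $x_{k,l}$ with $\beta(k)<\beta(l)\le\rho(k)$. Your hypothesis at most rules out $\beta(l)=\rho(k)$, and you give no reason why a black $l$ cannot sit in a column strictly inside the span of the $k$-string.

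A route that avoids the black tableau uses the closedness shown above. Prove that, under the hypothesis, the excluded roots together with the positive Levi roots form a closed set. Then the root set of $\mathfrak u(\mathscr R_t)$ is biclosed, so $\mathfrak u(\mathscr R_t)=\mathfrak n\cap w(\mathfrak n)$ for some $w$. Steinberg's theorem then makes $\overline{B\cdot\mathfrak u(\mathscr R_t)}$ an orbital variety closure.
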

\section{Surjectivity}\label{surjectivity}
We now turn to the proof of surjectivity. The component map defined by complete reverse
tableaux is surjective.
\subsection{The Global Red Multiset}\label{grm}

Let $C_{r}$ be a column of height $h$, and let $j_s\in R_s\cap C_{r}$.
We say that $j_s$ belongs to the {global red multiset} if the column $C_{r}$ is
surrounded $h-s+1$ neighbouring column pairs $(C_i,C_i')$ whose heights $s_i$ satisfy
$
s_i\in [\,s,h\,].
$

Moreover,   the entry $j_h\in R_h\cap C_{r}$ is said to have {maximal multiplicity $m'$}
if $C_{r}$ is enclosed by exactly $m'+1$ neighbouring column pairs $(C_i,C_i')$ with heights
$
s_i\in [\,h,\,h+m'\,].
$

The orbital varieties in Lemma~\ref{lem:OV} admit a tableau description. By
Lemma~\ref{lem:partial-RT-orbital}, this tableau may be viewed as a (partial) reverse tableau, in
which the entries that are lowered belong to the global red multiset.

\subsection{Reconstruction and surjectivity.}
%Finally one shows that every such factor-choice sequence coming from a minimal prime is realised by a reverse tableau, and that the resulting ideal recovers the prime.

\begin{theorem}\label{thm:surjectivity}
For every irreducible component $\mathscr C\in\Irr(\mathscr N)$ there exists a reverse tableau
$\mathscr  R$ such that $\phi(\mathscr  R)=\mathscr C_{\mathscr R}$.
\end{theorem}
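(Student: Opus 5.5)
The plan is an induction along a complete sequence $\mathcal P=(P_1,\dots,P_{\mathbf g})$. Fix $\mathscr C\in\Irr(\mathscr N)$ with defining prime $\mathfrak p$. By the Krull discussion of Section~\ref{sec:Krull}, $\codim_{\mathfrak m}\mathscr C=\mathbf g$. Since $\mathscr N$ is $B$-stable and $B$ is connected, $\mathscr C$ is $B$-stable.

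We will construct partial reverse tableaux $\mathscr R_0=\mathscr T,\mathscr R_1,\dots,\mathscr R_{\mathbf g}$ maintaining the invariant
\[
\mathscr C\subset \overline{B\cdot\mathfrak u(\mathscr R_t)},\qquad \codim_{\mathfrak m}\overline{B\cdot\mathfrak u(\mathscr R_t)}=t .
\]
The base case holds since $\mathfrak u(\mathscr R_0)=\mathfrak m$.

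For the inductive step, let $(C,C')=P_{t+1}$ be the next pair, of height $s$. It is free at stage $t$ by the discussion following the equality case in Section~\ref{sec:Krull}. Proposition~\ref{prop:factor} gives
\[
I^s_{C,C'}\big|_{\overline{B\cdot\mathfrak u(\mathscr R_t)}}=\prod_{i=0}^{n-1}M_{I_i,J_i},
\]
one factor for each $\mathscr R_t$-pseudo-neighbouring pair. The product vanishes on $\mathscr C$ and $\mathfrak p$ is prime, so some factor $M_{I_{i_0},J_{i_0}}$ lies in $\mathfrak p$. We then choose the substitution column $C''=\widetilde C_{i_0+1}$ in Step~1 of Section~\ref{ssec:RTconstruction}, which is permitted by the Enabling Proposition~\ref{prop:enabling}, and form $\mathscr R_{t+1}$. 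By the recording mechanism of Theorem~\ref{thm:RT-records-factors}, $\overline{B\cdot\mathfrak u(\mathscr R_{t+1})}$ should be precisely the component of $V(M_{I_{i_0},J_{i_0}})\cap\overline{B\cdot\mathfrak u(\mathscr R_t)}$ obtained by lowering the bottom entry of $\widetilde C_{i_0+1}$. Its codimension is $t+1$, by Lemma~\ref{lem:push-b-kills-invariant} and Corollary~\ref{codim}.

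The \textbf{main obstacle} is this step: showing that $\mathscr C$ lies in $\overline{B\cdot\mathfrak u(\mathscr R_{t+1})}$, and not merely in $V(M_{I_{i_0},J_{i_0}})\cap\overline{B\cdot\mathfrak u(\mathscr R_t)}$, which might a priori have several components. The first approach is to show that the hypersurface cut by the minor $M_{I_{i_0},J_{i_0}}$ in the irreducible variety $\overline{B\cdot\mathfrak u(\mathscr R_t)}$ is itself irreducible. We would argue this from the fact that the factor is a BS-type invariant for a smaller neighbouring configuration (columns $\widetilde C_{i_0},\widetilde C_{i_0+1}$), so its zero locus is governed by the same mechanism with a single pair. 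In the orbital-variety case we would use Lemma~\ref{lem:partial-RT-orbital} and Lemma~\ref{lem:OV}, together with the Global Red Multiset of Section~\ref{grm}, to identify the components with tableau moves lowering an entry of the global red multiset. Should several components occur, we would exploit the flexibility in Remark~\ref{rmk:flexibility}: the alternative choices of shifting (including extreme shifting) and of $C''$ produce the other candidate tableau subspaces. We then pick the one containing $\mathscr C$, which exists because the candidate varieties cover the intersection by multilinearity and multiplicity one \cite[3.6]{FJ6}.

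After $\mathbf g$ steps we obtain a complete reverse tableau $\mathscr R=\mathscr R_{\mathbf g}$ with $\mathscr C\subset\mathscr C_{\mathscr R}=\overline{B\cdot\mathfrak u(\mathscr R)}$. By Propositions~\ref{prop_dim} and~\ref{prop_irreducibility}, $\mathscr C_{\mathscr R}$ is an irreducible subvariety of $\mathscr N$ of dimension $\dim\mathfrak m-\mathbf g=\dim\mathscr C$. An irreducible closed subvariety containing another of the same dimension equals it, so $\mathscr C=\mathscr C_{\mathscr R}=\phi(\mathscr R)$, which is the claim of Theorem~\ref{thm:surjectivity}.
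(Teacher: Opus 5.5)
You follow the inductive construction that the paper gives right after the formal proof of Theorem~\ref{thm:surjectivity}. You are right that the crux is passing from $\mathscr C\subset V(M_{I_{i_0},J_{i_0}})\cap X_t$ to $\mathscr C\subset X_{t+1}$, where $X_t:=\overline{B\cdot\mathfrak u(\mathscr R_t)}$. The paper's sentence ``the selected factor vanishes on $X_{t+1}$ and on $\mathscr C$, therefore $\mathscr C\subset X_{t+1}$'' does not follow. However, your proposal leaves this step open.
\begin{itemize}
\item Irreducibility of $V(M_{I_{i_0},J_{i_0}})\cap X_t$ is not proved. Even if it held, it would not identify that set with $X_{t+1}$.
\item The claim that the varieties obtained from the various choices of $C''$ and of shifting cover the intersection is exactly surjectivity at stage $t+1$, so invoking it is circular.
\item The multiplicity-one statement of \cite[3.6]{FJ6} concerns non-merging of components, not covering.
\item Lemma~\ref{lem:push-b-kills-invariant} only gives codimension at least $1$, and Corollary~\ref{codim} only an upper bound for combined moves. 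So they do not yield $\codim_{\mathfrak m}X_{t+1}=t+1$.
\end{itemize}

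There is a more basic failure one step earlier. The identity of Proposition~\ref{prop:factor} is a polynomial identity on the linear space $\mathfrak u(\mathscr R_t)$ only. The minors $M_{I_i,J_i}$ are not $B$-semi-invariant, so semi-invariance of $I^s_{C,C'}$ does not carry the product formula to $X_t$. Hence ``the product vanishes on $\mathscr C$'' is unjustified, and the paper's own example shows it is false.

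Take $(1,2,2,1)$ with order $(C_2,C_3),(C_1,C_4)$. Then $X_1=V(x_{24}x_{35}-x_{25}x_{34})$ and $I^1_{C_1,C_4}|_{\mathfrak u(\mathscr R^2)}=(x_{12}x_{24}+x_{13}x_{34})\,x_{46}$. The component with Red Set $\{5,6\}$ is
$\mathscr C=V(x_{24}x_{35}-x_{25}x_{34},\,x_{24}x_{46}+x_{25}x_{56},\,x_{34}x_{46}+x_{35}x_{56})=\overline{B\cdot\mathfrak u(\mathscr R^3_{(5,6)})}$.
The point with $x_{12}=x_{24}=x_{25}=x_{34}=x_{35}=x_{46}=1$, $x_{56}=-1$ and all other coordinates $0$ lies on $\mathscr C$. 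There $I^1_{C_1,C_4}=0$, while both factors equal $1$. Consequences:
\begin{itemize}
\item Neither factor lies in $\mathfrak p$, so the selection step cannot even start for this $\mathscr C$.
\item The factor $x_{46}$ selected by $\mathscr R^3_{(5,6)}$ vanishes on $\mathfrak u(\mathscr R^3_{(5,6)})$ but not on $X_2$.
\item So your description of $X_{t+1}$ as a component of $V(M_{I_{i_0},J_{i_0}})\cap X_t$ is false.
\end{itemize}

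A correct induction has to work inside $\mathfrak u(\mathscr R_t)$, and you would need two things.
\begin{itemize}
\item[(a)] $\mathscr C$ meets $B\cdot\mathfrak u(\mathscr R_t)$ in a dense subset. Then some irreducible component $Z$ of $\mathscr C\cap\mathfrak u(\mathscr R_t)$ satisfies $\overline{B\cdot Z}=\mathscr C$. On $Z$ the factorisation is a genuine identity, and primality selects a factor.
\item[(b)] The zero locus of the selected factor in $\mathfrak u(\mathscr R_t)$ lies in $\overline{B\cdot\mathfrak u(\mathscr R_{t+1})}$. This is a sweeping statement. In the example, conjugating by the root subgroup of $B$ for $(2,3)$ sweeps $\{x_{25}=x_{35}=0,\ x_{12}x_{24}+x_{13}x_{34}=0\}$ into $\overline{B\cdot\mathfrak u(\mathscr R^3_{(5,4)})}$.
\end{itemize}
Neither (a) nor (b) appears in your proposal or in the paper, and together they are the real content of surjectivity.
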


\begin{proof}
Let $\mathscr C\in\Irr(\mathscr N)$, and write $\mathfrak p:=I(\mathscr C)$ for its defining
prime ideal, so $\mathfrak p$ is a minimal prime over $\mathscr I_+$.

\medskip

By Lemma~\ref{lem:OV}, there exists a finite family of orbital varieties
$\{\mathcal O_\alpha\}_{\alpha\in A}$ such that
\[
\mathscr C\subset \overline{\mathcal O_\alpha}\subset \mathfrak m\qquad(\alpha\in A),
\]
and we may choose such a family minimal for inclusion. Each $\overline{\mathcal O_\alpha}$ admits a
tableau description by lowering boxes (or subcolumns). By \S\ref{grm}, these lowered entries form a
multiset $Red_{\mathcal O_\alpha}$ contained in the global red multiset, and
\[
|Red_{\mathcal O_\alpha}|=\codim_{\mathfrak m}\,\overline{\mathcal O_\alpha},
\]
counted with multiplicities (Corollary~\ref{codim}). In this multiset $Red_{\mathcal O_\alpha}$  an entry may occur with multiplicity when the entree is lowered by more than one row relative to its original position in the Young tableau defining $\mathfrak m$. Moreover, the elements of $Red_{\mathcal O_\alpha}$  lie between neighbouring column pairs.

\medskip

Suppose that, for some \(\alpha \neq \beta\), the multisets \(Red_{\mathcal O_\alpha}\) and \(Red_{\mathcal O_\beta}\) intersect in a multiset of cardinality \(v>0\). \[
|Red_{\mathcal O_\alpha}\cap Red_{\mathcal O_\beta}|=v
\]
Then one may modify the young tableau of $\mathcal O_\beta$ (or $\mathcal O_\alpha$) by cancelling these $v$ common lowered
entries (i.e.\ move the corresponding lowered entries
back to their original rows), counted with multiplicity. This produces a new tableau and hence a new orbital variety
$\mathcal O_\beta'$ with
\[
\overline{\mathcal O_\beta}\subset \overline{\mathcal O_\beta'}\quad\text{and}\quad
|Red_{\mathcal O_\beta'}|=|Red_{\mathcal O_\beta}|-v,
\]
again by Corollary~\ref{codim}. Since $\overline{\mathcal O_\beta}\subset \overline{\mathcal O_\beta'}$,
we still have $\mathscr C\subset \overline{\mathcal O_\beta'}$. Replacing $\mathcal O_\beta$ by
$\mathcal O_\beta'$ and iterating this procedure, we obtain a minimal family
$\{\mathcal O_\alpha\}_{\alpha\in A}$ such that the multisets $Red_{\mathcal O_\alpha}$ are
pairwise disjoint:
\[
Red_{\mathcal O_\alpha}\cap Red_{\mathcal O_\beta}=\varnothing\qquad(\alpha\neq\beta).
\]

\medskip

Let $Red:=\bigsqcup_{\alpha\in A} Red_{\mathcal O_\alpha}$ be the disjoint multiset union. Then
\[
|Red|=\sum_{\alpha\in A}\codim_{\mathfrak m}\,\overline{\mathcal O_\alpha}.
\]
Because the moves are disjoint, Corollary~\ref{codim} and Lemma \ref{lem:OV} give additivity of codimension for the
corresponding intersections, and the minimality of the family implies that the intersection has
codimension $g$ (the codimension of $\mathscr N$) at the generic point of $\mathscr C$. Hence
\[
|Red|=g.
\]
In particular, $Red$ provides exactly $g$ red choices (with multiplicities) compatible with the
neighbouring-pair framework.

\medskip
Recall that $\phi(\mathscr R)=\mathscr C_{\mathscr R}$ by definition; thus it suffices to
construct a reverse tableau $\mathscr R$ such that $\mathscr C\subset \mathscr C_{\mathscr R}$.
Since $\mathscr C_{\mathscr R}$ is irreducible of codimension $g$ in $\mathfrak m$
(Propositions~\ref{prop_dim} and~\ref{prop_irreducibility}), and $\mathscr C$ has the same
dimension (because $\codim_{\mathfrak m}\mathscr N=g$), this inclusion will force
$\mathscr C=\mathscr C_{\mathscr R}$.

By \S\ref{grm} and the reverse-tableau algorithm (cf.\ \cite{FJ6}), the multiset $Red$ determines
a reverse tableau $\mathscr R$ (equivalently, a sequence of red entries $(a_1,\dots,a_g)$ in the
prescribed implementation order). Let $\mathscr C_{\mathscr R}$ be the irreducible subvariety
attached to $\mathscr R$ (Proposition~\ref{prop_irreducibility}), so $\mathscr C_{\mathscr R}\in\Irr(\mathscr N)$
and $\phi(\mathscr R)=\mathscr C_{\mathscr R}$ by definition.

By construction, at each stage the chosen red entry forces the vanishing of the corresponding
factor in the factorisation of the relevant generator (Theorem~\ref{thm:RT-records-factors}); hence
all generators of $\mathscr I_+$ vanish on $\mathscr C_{\mathscr R}$. Moreover, since the same red
data were extracted from orbital closures containing $\mathscr C$, we obtain an inclusion
\[
\mathscr C\subset \mathscr C_{\mathscr R}.
\]
Finally, Proposition~\ref{prop_dim} gives
\[
\dim \mathscr C_{\mathscr R}=\dim\mathfrak m-g=\dim \mathscr C,
\]
because $\mathscr C$ is an irreducible component of $\mathscr N$ of codimension $g$. Since
$\mathscr C$ and $\mathscr C_{\mathscr R}$ are irreducible and $\mathscr C\subset \mathscr C_{\mathscr R}$
with the same dimension, they coincide:
\[
\mathscr C=\mathscr C_{\mathscr R}.
\]
Thus $\phi$ is surjective.
\end{proof}

\bigskip

Let $\mathscr C\in\Irr(\mathscr N)$ and set $\mathfrak p:=I(\mathscr C)\subset \mathbb C[\mathfrak m]$.
Fix the complete sequence of neighboring column pairs $\{(C_t,C_t')\}_{t=1}^g$ and write
$I_t:=I^{s_t}_{C_t,C_t'}$.

We construct inductively a sequence of partial reverse tableaux
\[
\mathscr R_0,\mathscr R_1,\dots,\mathscr R_g
\]
such that, for each $t$, writing
\[
X_t:=\overline{B\cdot \mathfrak u(\mathscr R_t)}\subset \mathfrak m,
\]
one has
\begin{equation}\label{eq:surj-induction}
\mathscr C\subset X_t.
\end{equation}
For $t=0$, take $\mathscr R_0=\mathscr T$, hence $X_0=\mathfrak m$, and \eqref{eq:surj-induction} holds.

Assume $\mathscr R_t$ is constructed and \eqref{eq:surj-induction} holds.
Consider the next neighboring pair $(C_{t+1},C_{t+1}')$ of height $s$, and set $(C,C')=(C_{t+1},C_{t+1}')$.
Since $\mathscr C\subset \mathscr N=V(\mathscr I_+)$, we have $I^s_{C,C'}\in \mathfrak p$, hence $I^s_{C,C'}$
vanishes on $\mathscr C\subset X_t$.

Let $A_t:=\mathbb C[X_t]=\mathbb C[\mathfrak m]/I(X_t)$ %and denote images in $A_t$ by bars
.
Because $\mathscr C\subset X_t$, the ideal $\bar{\mathfrak p}:=\mathfrak p/I(X_t)\subset A_t$ is prime.

\smallskip
Let $(C,C')$ is free at stage $t$, then $I^s_{C,C'}\big|_{X_t}\not\equiv 0$. Choose $\mathscr R_{t+1}$ to be any reverse tableau obtained from
$\mathscr R_t$ by implementing $(C,C')$ %as in Section~\ref{2}
. By construction,
$X_{t+1}=\overline{B\cdot \mathfrak u(\mathscr R_{t+1})}\subset X_t$, so \eqref{eq:surj-induction} holds for $t+1$.

\smallskip
{ Let $(C,C')$ is free at stage $t$.}
By Proposition~\ref{prop:free-pair-factorisation}, the restriction of $I^s_{C,C'}$ to $X_t$ admits a factorisation
\[
\overline{I^s_{C,C'}}=\prod_{i=0}^{n-1}\overline{I^s_{\widetilde C_i,\widetilde C_{i+1}}}\qquad\text{in }A_t,
\]
where $\widetilde C_0,\dots,\widetilde C_n$ are the height-$s$ columns in $\mathcal T_{C,C'}(\mathscr R_t)$
(with $\widetilde C_0=C$ and $\widetilde C_n=C'$) and the factors are indexed by the
$\mathscr R_t$-pseudo-neighbouring pairs $(\widetilde C_i,\widetilde C_{i+1})$.

Since $\overline{I^s_{C,C'}}\in \bar{\mathfrak p}$ and $\bar{\mathfrak p}$ is prime, there exists an index $i$ such that
\[
\overline{I^s_{\widetilde C_i,\widetilde C_{i+1}}}\in \bar{\mathfrak p},
\]
equivalently $I^s_{\widetilde C_i,\widetilde C_{i+1}}\in \mathfrak p$.
Choose $\mathscr R_{t+1}$ among the reverse tableaux obtained from $\mathscr R_t$ by implementing $(C,C')$
so that the distinguished red entry selects this index $i$ (as in Theorem~\ref{thm:RT-records-factors}).
By construction% of the reverse-tableau step
, the selected factor vanishes on $\mathfrak u(\mathscr R_{t+1})$, hence on
$X_{t+1}=\overline{B\cdot \mathfrak u(\mathscr R_{t+1})}$. Since the same factor lies in $\mathfrak p$, it vanishes on
$\mathscr C$, and therefore $\mathscr C\subset X_{t+1}$. This proves \eqref{eq:surj-induction} for $t+1$.

\smallskip
By induction, \eqref{eq:surj-induction} holds for all $t$, and in particular for $t=g$ we obtain a reverse tableau
$\mathscr R:=\mathscr R_g$ such that
\[
\mathscr C\subset X_g=\overline{B\cdot \mathfrak u(\mathscr R)}=:\mathscr C_{\mathscr R}.
\]
By Propositions~\ref{prop_dim} and~\ref{prop_irreducibility}, one has $\mathscr C_{\mathscr R}\in \Irr(\mathscr N)$.
Since $\mathscr C$ is an irreducible component of $\mathscr N$ and
$\mathscr C\subset \mathscr C_{\mathscr R}\subset \mathscr N$, it follows that $\mathscr C=\mathscr C_{\mathscr R}$.

\medskip
The theorem \ref{thm:surjectivity} closes the gap between combinatorics and geometry.  Injectivity says that different component tableaux do not produce the same component; surjectivity says that no component is missing.  Hence the Red Set, through reverse tableaux and the component map, gives a complete and usable classification of the irreducible components of $\mathscr N$ in type $A$.

%==========================================================================
\section{Example}\label{sec:example}
%==========================================================================
We illustrate the surjectivity proof through three
explicit example, presented without graphical illustrations.  Throughout,
entries are written row-by-row (top to bottom) within each column.\\

\bigskip
%\subsection
{Example : The composition $(1,2,2,1)$}

Let the composition be $(1,2,2,1)$. Then $n=6$, and the standard tableau $\mathscr T$ is
\[
\begin{array}{cccc}
  C_1 & C_2 & C_3 & C_4\\
  1   & 2   & 4   & 6\\
      & 3   & 5   &
\end{array}
\]
where
$
C_1=(1), C_2=(2,3), C_3=(4,5),$ and $ C_4=(6).
$
There are two neighbouring pairs
$
(C_2,C_3) \text{of height }2,
\text{ and }
(C_1,C_4) \text{of height }1.
$
Hence $\mathbf g=2$.\\
The corresponding Benlolo--Sanderson invariants are
$$
I^2_{C_2,C_3}
=
x_{24}x_{35}-x_{25}x_{34},
$$
$$
I^1_{C_1,C_4}
=
x_{12}x_{24}x_{46}
+x_{12}x_{25}x_{56}
+x_{13}x_{34}x_{46}
+x_{13}x_{35}x_{56}.
$$
Where the degree of $I^2_{C_2,C_3}$ is $2$, and the degree of $I^1_{C_1,C_4}$ is $3$, in agreement with Lemma~\ref{lem:degree}.

\medskip
In this example the Global Red Set is
$
\operatorname{Global Red}(\mathscr T)=\{4,5,6\}.
$
Indeed, the entry $5$ belongs to the Global Red Set because it is the bottom entry of the column $C_3=(4,5)$, and $C_3$ is involved in the neighbouring pair $(C_2,C_3)$ of height $2$. The entry $6$ belongs to the Global Red Set because $C_4=(6)$ is involved in the neighbouring pair $(C_1,C_4)$ of height $1$. Finally, the entry $4$ belongs to the Global Red Set because it lies above $5$ in the column $C_3$, and the column $C_3$ is surrounded by the two relevant neighbouring pairs $(C_2,C_3)$ and $(C_1,C_4)$, of heights $2$ and $1$ respectively.

Thus a complete reverse tableau has a Red Set of cardinality $\mathbf g=2$, chosen from the Global Red Set:
\[
R(\mathscr R)\subset \operatorname{Global Red}(\mathscr T),
\qquad
|R(\mathscr R)|=2.
\]

\medskip
We have two  complete sequence :\\
The first order of complete sequence is $\mathcal P=\{(C_1,C_4),(C_2,C_3)\}$. 
We  start by implementing $(C_1,C_4)$ of height $1$. The lowest black entry of $C_4$ is $6$. We recolour $6$ red and place a new black copy of $6$ below the entry $4$, shifting the lower parts of the intermediate columns to the left to make space . This gives
\[
\mathscr R^1=\mathscr T
\quad\longmapsto\quad
\widetilde{\mathscr R}^2=
\begin{array}{cccc}
  C_1 & C_2 & C_3 & C_4\\
  1   & 2   & 4   & \red{6}\\
      3 &   5   & 6\\
      &     &   &
\end{array}.
\]
The excluded roots include
$
\{x_{13}, x_{25}, x_{46}\}.
$
Hence
$
I^1_{C_1,C_4}\big|_{\mathfrak u(\widetilde{\mathscr R}^2)}=0.
$

It remains to implement the neighbouring pair $(C_2,C_3)$ of height $2$.
At the stage $\widetilde{\mathscr R}^2$, the root $x_{25}$ is excluded, hence
$x_{25}\notin \mathfrak u(\widetilde{\mathscr R}^2)$, but it is in $\overline{B\cdot \mathfrak u(\widetilde{\mathscr R}^2)} $, the in variant $I^2_{C_2,C_3}$  restricted to  $\overline{B\cdot \mathfrak u(\widetilde{\mathscr R}^2)} $ isn't disturbe
\[
I^2_{C_2,C_3}|_{ \mathfrak u(\widetilde{\mathscr R}^2)}
=
x_{24}x_{35}-x_{25}x_{34}
\]

Implementing $(C_2,C_3)$ recolours the entry $5$ red  and put a black $5$ usder $3$ and produces the complete reverse tableau
\[
\widetilde{\mathscr R}^3
=
\begin{array}{cccc}
  C_1 & C_2 & C_3 & C_4\\
  1   & 2   & 4   & \red{6}\\
       3   & \red{5} &6 \\
       5   &    &
\end{array}.
\]
Its Red Set is
\[
R(\widetilde{\mathscr R}^3)=\{5,6\}.
\]
Thus the second order gives the same Red Set $\{5,6\}$ as one of the tableaux obtained from the first order. This illustrates the flexibility described in Remark~\ref{rmk:flexibility}: different complete sequences may lead to the same Red Set, and hence to the same irreducible component.
\\
\bigskip

The second order complete sequence is $\mathcal P'=\{(C_2,C_3),(C_1,C_4)\}$.
We first implement the neighbouring pair $(C_2,C_3)$ of height $2$. The lowest black entry of $C_3$ is $5$. We recolour $5$ red and place a new black copy of $5$ below 3 in column  $C_2$. Hence
\[
\mathscr R^1=\mathscr T
\quad\longmapsto\quad
\mathscr R^2=
\begin{array}{cccc}
  C_1 & C_2 & C_3 & C_4\\
  1   & 2   & 4   & 6\\
      & 3   & \red{5} & \\
      & 5   &     &
\end{array}.
\]
The excluded roots created at this step are 
${
x_{25}, x_{35}.
}$
Therefore
$
I^2_{C_2,C_3}\big|_{\mathfrak u(\mathscr R^2)}=0.
$
This is exactly the vanishing predicted by Theorem~\ref{thm:vanish}.

We now restrict the second invariant $I^1_{C_1,C_4}$ to $\mathfrak u(\mathscr R^2)$. Since $x_{25}=x_{35}=0$ on $\mathfrak u(\mathscr R^2)$, and   $\{x_{25},x_{35}\}\notin  \overline{B\cdot \mathfrak u(\widetilde{\mathscr R}^2)} $ ,  we obtain
\[
I^1_{C_1,C_4}\big|_{\mathfrak u(\mathscr R^2)}
=
x_{12}x_{24}x_{46}
+
x_{13}x_{34}x_{46}.
\]
Hence
\[
I^1_{C_1,C_4}\big|_{\mathfrak u(\mathscr R^2)}
=
\bigl(x_{12}x_{24}+x_{13}x_{34}\bigr)x_{46}.
\]
This is the factorisation predicted by Proposition~\ref{prop:factor}.

In the trapezium $\mathcal T^1_{C_1,C_4}(\mathscr R^2)$, the height-$1$ pseudo-neighbouring columns are
$$
\widetilde C_0=C_1=(1),
\widetilde C_1=(4), \text { and }
\widetilde C_2=C_4=(6).
$$
Thus the pseudo-neighbouring pairs are
$
(\widetilde C_0,\widetilde C_1),$ and $
(\widetilde C_1,\widetilde C_2).
$
The factorisation can therefore be written as
$$
I^1_{C_1,C_4}\big|_{\mathfrak u(\mathscr R^2)}
=
I^1_{\widetilde C_0,\widetilde C_1}
\,
I^1_{\widetilde C_1,\widetilde C_2},
\text{ where }
I^1_{\widetilde C_0,\widetilde C_1}
=
x_{12}x_{24}+x_{13}x_{34},
\qquad
I^1_{\widetilde C_1,\widetilde C_2}
=
x_{46}.
$$
Since the ideal of an irreducible component is prime, the vanishing of the product forces the vanishing of at least one of these two factors. The reverse tableau records which factor is selected, as in Theorem~\ref{thm:RT-records-factors}.

\medskip

\noindent
If the selected the entry is $6$ is recoloured red  and put the black $6$ under $4$ pushing  red$5$ to make space these lead to new excluded roo $\{x_{1,3},x_{1,5},x_{3,5},x_{2,5}, x_{4,6}\}$  . Therefor the factor is $I^1_{\widetilde C_1,\widetilde C_2}=x_{46}$ vanishes . We obtain
\[
\mathscr R^3_{(5,6)}
=
\begin{array}{cccc}
  C_1 & C_2 & C_3 & C_4\\
  1   & 2   & 4   & \red{6}\\
       3   & \red{5} &6 \\
       5   &   &
\end{array}.
\]
The Red Set is
$
R(\mathscr R^3_{(5,6)})=\{5,6\}.
$

\medskip

If the selected the entry $4$ is recoloured red  and put a black $4$ under $2$ pushing $(3,5)$ to make space.  The set of excluded roots are $\{x_{1,3},x_{1,5}, x_{2,5}, x_{2,4}\}$. Therefor the factor is 
$
I^1_{\widetilde C_0,\widetilde C_1}
=
x_{12}x_{24}+x_{13}x_{34},
$ vanishes. We obtain
\[
\mathscr R^3_{(5,4)}
=
\begin{array}{cccc}
  C_1 & C_2 & C_3 & C_4\\
  1   & 2   & \red{4} & 6\\
       3 & 4 & \red{5} & \\
       5   &   &
\end{array}.
\]
The Red Set is
$
R(\mathscr R^3_{(5,4)})=\{5,4\}.
$

Thus the first order $\mathcal P'=\{(C_2,C_3),(C_1,C_4)\}$ gives two possible Red Sets 
$
\{5,6\}
 \text{ and } 
\{5,4\}.
$

\medskip

%In conclusion, for the composition $(1,2,2,1)$, the Global Red Set is
%\[
%\operatorname{GlobalRed}(\mathscr T)=\{4,5,6\},
%\]
%while the possible Red Sets of complete reverse tableaux are
%\[
%\{5,6\}
%\qquad\text{and}\qquad
%\{5,4\}.
%\]
%These two Red Sets give the two irreducible components $\mathscr C$ and $\mathscr C'$ of the nilfibre $\mathscr N$, where  $\mathscr C$ is the zero of the ideal $<x_{24}x_{35}-x_{25}x_{34},x_{12}x_{24}+x_{13}x_{34}>$  and  $\mathscr C'$  is the zero of the ideal $<x_{24}x_{35}-x_{25}x_{34},x_{4,6}>$   . This example shows explicitly how the factorisation of the BS invariants, together with the primeness of the defining ideal of an irreducible component, is recorded by the reverse-tableau construction.

In conclusion, for the composition $(1,2,2,1)$, the Global Red Set is
\[
\operatorname{Global Red}(\mathscr T)=\{4,5,6\},
\]
whereas the possible Red Sets of complete reverse tableaux are
\[
\{5,6\}
\qquad\text{and}\qquad
\{5,4\}.
\]
These two Red Sets correspond to the two irreducible components of the
nilfibre $\mathscr N$.  The example shows, in a concrete case, how the
factorisation of the Benlolo--Sanderson invariants and the primeness of the
defining ideal of an irreducible component are recorded by the reverse-tableau
construction.

\section*{Impact and Concluding Remarks}

The results established in this paper complete the classification of the irreducible components of the nilfibre $\mathscr{N}$ in type $A$. By proving the surjectivity of the component map, we show that the combinatorial framework of reverse tableaux is sufficient to capture the entire geometric complexity of $\mathscr{N}$.

The impact of this work is significant for several reasons. Firstly, it provides a constructive method to reach every irreducible component, overcoming the challenges posed by the lack of finite orbits. Secondly, the \textbf{flexibility} inherent in mapping through the tableau and the Red Set  proves to be a vital visual and technical tool for the factorisation of invariants. This suggests that the "neighborhood philosophy" used here may serve as a blueprint for extending these results to other classical Lie algebras (types $B, C, D$). Finally, this work bridges a gap between the purely combinatorial study of tableaux and the algebraic geometry of orbital varieties, providing a more intuitive, visual language for future research in the field.

\medskip

\end{document}